\newlist{altenumerate}{enumerate}{1}
\setlist*[altenumerate]{label=\textbf{(c\arabic*)}, resume=alt}
\newcommand{\id}{\ensuremath{\mathrm{id}}}
\newcommand{\B}{\ensuremath{\mathcal{B}}}
\newcommand{\Lop}{\operatorname{\rm{Lip}}}
\newcommand{\Mop}{\operatorname{\rm{D}}}
\newcommand{\KK}{\operatorname{\rm{K}}}
\newcommand{\F}{\ensuremath{\mathcal{F}}}
\newcommand{\FF}{\ensuremath{\mathbb{F}}}
\newcommand{\CC}{\ensuremath{\mathbb{C}}}
\newcommand{\II}{\ensuremath{\mathcal{I}}}
\newcommand{\PP}{\ensuremath{\mathcal{P}}}
\newcommand{\HH}{\ensuremath{\mathcal{H}}}
\newcommand{\Ind}{\ensuremath{\mathbf{1}}}
\newcommand{\Lip}{\ensuremath{\mathrm{Lip}}}
\newcommand{\Rea}{\ensuremath{\mathbb{R}}}
\newcommand{\Nat}{\ensuremath{\mathbb{N}}}
\newcommand{\MM}{\ensuremath{\mathcal{M}}}
\newcommand{\NN}{\ensuremath{\mathcal{N}}}
\newcommand{\ee}{\ensuremath{\mathbf{e}}}
\newcommand{\spn}{\operatorname{\mathrm{span}}\nolimits}
\newcommand{\supp}{\operatorname{\mathrm{supp}}\nolimits}
\newtheorem{Theorem}{Theorem}[section]
\newtheorem{Lemma}[Theorem]{Lemma}
\newtheorem{Proposition}[Theorem]{Proposition}
\newtheorem{Corollary}[Theorem]{Corollary}
\newtheorem{thmx}{Theorem}
\theoremstyle{remark}
\newtheorem{Remark}[Theorem]{Remark}
\newtheorem{Convention}[Theorem]{Convention}
\newtheorem{Definition}[Theorem]{Definition}
\newtheorem{Example}[Theorem]{Example}
\def\MR#1{}
\subjclass[2010]{26A16; 46A16; 46B20; 46B80; 46B85}
\keywords{Metric space, Lipschitz algebra, Lipschitz free space}
\begin{document}

\title[Lipschitz-free spaces over unbounded spaces]{Lipschitz algebras and Lipschitz-free spaces over unbounded metric spaces}

\author[F. Albiac]{Fernando Albiac}
\address{Department of Mathematics, Statistics, and Computer Sciencies--InaMat2 \\
Universidad P\'ublica de Navarra\\
Campus de Arrosad\'{i}a\\
Pamplona\\
31006 Spain}
\email{fernando.albiac@unavarra.es}

\author[J. L. Ansorena]{Jos\'e L. Ansorena}
\address{Department of Mathematics and Computer Sciences\\
Universidad de La Rioja\\
Logro\~no\\
26004 Spain}
\email{joseluis.ansorena@unirioja.es}

\author[M. C\'uth]{Marek C\'uth}
\address{Faculty of Mathematics and Physics, Department of Mathematical Analysis\\
Charles University\\
186 75 Praha 8\\
Czech Republic}
\email{cuth@karlin.mff.cuni.cz}

\author[M. Doucha]{Michal Doucha}
\address{Institute of Mathematics\\
Czech Academy of Sciences\\
\v Zitn\'a 25\\
115 67 Praha 1\\
Czech Republic}
\email{doucha@math.cas.cz}

\begin{abstract}
We investigate a way to turn an arbitrary (usually, unbounded) metric space $\MM$ into a bounded metric space $\B$ in such a way that the corresponding Lipschitz-free spaces $\F(\MM)$ and $\F(\B)$ are isomorphic. The construction we provide is functorial in a weak sense and has the advantage of being explicit. Apart from its intrinsic theoretical interest, it has many applications in that it allows to transfer many arguments valid for Lipschitz-free spaces over bounded spaces to Lipschitz-free spaces over unbounded spaces. Furthermore, we show that with a slightly modified point-wise multiplication, the space $\Lip_0(\MM)$ of scalar-valued Lipschitz functions vanishing at zero over any (unbounded) pointed metric space is a Banach algebra with its canonical Lipschitz norm.
\end{abstract}

\thanks{F. Albiac acknowledges the support of the Spanish Ministry for Science and Innovation under Grant PID2019-107701GB-I00 for \emph{Operators, lattices, and structure of Banach spaces}. F. Albiac and J.~L. Ansorena acknowledge the support of the Spanish Ministry for Science, Innovation, and Universities under Grant PGC2018-095366-B-I00 for \emph{An\'alisis Vectorial, Multilineal y Aproximaci\'on}. M.~C\'uth has been supported by Charles University Research program No. UNCE/SCI/023. M. Doucha was supported by the GA\v{C}R project EXPRO 20-31529X and RVO: 67985840.}

\maketitle

\section{Introduction}
\noindent
Lipschitz spaces over metric spaces and their canonical preduals form by now a fundamental class of Banach spaces. By a Lipschitz space over a pointed metric space $(\MM,0)$ we understand the Banach space of all scalar-valued Lipschitz functions vanishing at $0$ with the minimal Lipschitz constant as a norm; further denoted by $\Lip_0(\MM)$. We recall that the canonical predual of $\Lip_0(\MM)$ is the Lipschitz-free space $\F(\MM)$, also known as the Arens-Eels space. The geometry of Lipschitz-free spaces is nowadays one of the most active fields of study within Banach space theory (see, e.g., \cites{AFGZ20, AACD20JFA, AP19, APP19, CKK19, GL18, N20, OO20, W18} for a non-exhaustive list of some recent developments).

Although Lipschitz-free spaces can be defined over any metric space, their theory can be developed more smoothly over bounded metric spaces. Indeed, the space $\Lip_0(\MM)$ is closed under pointwise multiplication if and only if $\MM$ is bounded, in which case $\Lip_0(\MM)$ becomes a topological algebra (even a Banach algebra after renorming). This additional algebraic structure makes these spaces more interesting and enriches their study. We refer the reader to the monograph \cite{WeaverBook2018} and to the papers \cites{CC11, CCD19, ChM19, R89, W94, W95} for some more information and recent advances on the subject. One does not face a priori such problems when dealing with Lipschitz-free spaces over unbounded metric spaces. However, a recurring pattern in the life of a Lipschitz-free space researcher is that proofs are often much simpler in the bounded case, where one does not need to deal with difficulties arising from the geometry of metric spaces at large scales.

In the literature we find classes of metric spaces whose Lipschitz-free space is isomorphic to the Lipschitz-free space over some bounded set. Perhaps the best-known example is the class of Banach spaces, whose Lipschitz free spaces are isomorphic to the Lipschitz free spaces over their unit balls (see \cite{Kaufmann2015}). However, in those cases the isomorphism is not explicit because it relies on Pe\l czy\'{n}ski's decomposition method. Our first main result in this article addresses this problem by showing that Lipschitz-free spaces, or more generally Lipschitz-free $p$-spaces for any $0<p\leq 1$, over bounded metric spaces are universal in the following sense.

\begin{thmx}\label{thm:A}
For an arbitrary metric space $\MM$ there is a \emph{bounded} metric space $\B(\MM)$ such that the Lipschitz free $p$-spaces $\F_p(\MM)$ and $\F_p(\B(\MM))$ are isomorphic, for $0<p\leq 1$.
Moreover, the association $\MM\mapsto\B(\MM)$ is a functor from the category of metric spaces with bi-Lipschitz maps as morphisms.
\end{thmx}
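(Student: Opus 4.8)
The plan is to realize both free spaces as one and the same $\ell_p$-direct sum indexed by dyadic scales, and to build $\B(\MM)$ by rescaling the dyadic annuli of $\MM$ so that all of them fit into a set of diameter at most $4$. Fix the basepoint $0\in\MM$ and set, for $n\in\mathbb Z$, the overlapping coronas $C_n=\{x\in\MM:\ 2^{n-1}\le d(x,0)\le 2^{n+1}\}$, each with a basepoint adjoined. On the disjoint union $\bigsqcup_n C_n$ I rescale the $n$-th piece by the factor $2^{-n}$ and glue all the pieces at a single common basepoint, declaring the distance between points in different pieces to be the sum of their (rescaled) distances to that basepoint. Since $\mathrm{diam}(C_n)\le 2^{n+2}$ and every point of $C_n$ lies within $2^{n+1}$ of $0$, after rescaling each piece has diameter at most $4$ and sits within distance $2$ of the common basepoint; hence $\B(\MM)$ is bounded, and the construction is manifestly explicit.

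For the isomorphism I will produce on each side a decomposition of the free $p$-space as an $\ell_p$-sum over the scales. The tool is a Lipschitz partition of unity $\{\phi_n\}$ subordinate to $\{C_n\}$ with uniformly bounded overlap, and the associated weighting operators (for $p=1$, the preadjoints of the multiplication operators $M_{\phi_n}$ on $\Lip_0(\MM)$; for general $0<p\le1$, their direct analogues on $\F_p$). The crucial quantitative point is that the transition region of $\phi_n$ has width comparable to $2^{n}$, so $\Lop(\phi_n)\approx 2^{-n}$, which is exactly cancelled by the bound $\|f\|_{\infty,C_n}\le 2^{n+1}\Lop(f)$ coming from $f(0)=0$; consequently $\Lop(\phi_n f)\lesssim\Lop(f)$, the induced operators $R_n:\F_p(\MM)\to\F_p(C_n)$ and $I_n:\F_p(C_n)\to\F_p(\MM)$ are uniformly bounded, and the partition of unity assembles them into an isomorphism $\F_p(\MM)\cong\big(\bigoplus_n \F_p(C_n,d)\big)_{\ell_p}$. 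Running the identical argument inside $\B(\MM)$ — where the same coronas reappear, now carrying the rescaled metric $2^{-n}d$ — yields $\F_p(\B(\MM))\cong\big(\bigoplus_n \F_p(C_n,2^{-n}d)\big)_{\ell_p}$.

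It then remains to match the two $\ell_p$-sums. Here I use that multiplying a metric by a constant $\lambda>0$ produces an \emph{isometrically} isomorphic free $p$-space, via $\delta_x\mapsto \lambda^{-1}\delta_x$; thus for each fixed $n$ there is an isometry $S_n:\F_p(C_n,d)\to\F_p(C_n,2^{-n}d)$. Since an isometry in each coordinate preserves the $\ell_p$-sum norm, the operator $\bigoplus_n S_n$ is an isometric isomorphism of the two sums, and composing the three isomorphisms gives $\F_p(\MM)\cong\F_p(\B(\MM))$. The potentially dangerous per-scale factors $2^{n}$ do not destroy boundedness precisely because each $S_n$ is, by itself, an isometry rather than a mere bounded map with scale-dependent norm.

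Finally, for functoriality I send a basepoint-preserving bi-Lipschitz map $f:\MM\to\NN$ of constant $L$ to the coordinatewise map $\B(f)$, $x\in C_n(\MM)\mapsto f(x)$. Because $d(f(x),0)\in[L^{-1}d(x,0),L\,d(x,0)]$, a point at scale $2^{n}$ is carried to one whose scale lies within $\log_2 L$ of $n$, so the source and target rescaling factors differ by a bounded ratio and $\B(f)$ is again bi-Lipschitz with constant controlled by $L$; the axioms $\B(\id)=\id$ and $\B(g\circ f)=\B(g)\circ\B(f)$ are then routine. I expect the main obstacle to lie at exactly this last stage, interacting with the gluing of consecutive scales: one must fix the overlaps of the coronas and the rule assigning a point to its scale(s) canonically enough that the induced maps compose correctly, which is presumably why the functoriality is only asserted in a weak sense. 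The estimates controlling the ``long'' edges between far-apart coronas (routed through the basepoint) and the consistent handling of the overlap copies are the technical heart of the argument.
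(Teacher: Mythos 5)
There is a genuine gap, and it is fatal to the construction rather than a fixable technicality. The critical false step is the claim that the partition-of-unity operators ``assemble into an isomorphism $\F_p(\MM)\cong\bigl(\bigoplus_n\F_p(C_n,d)\bigr)_{\ell_p}$''. What those operators actually produce is a \emph{complemented embedding}: you get $R\colon\F_p(\MM)\to\bigl(\bigoplus_n\F_p(C_n)\bigr)_{\ell_p}$ and $I\colon\bigl(\bigoplus_n\F_p(C_n)\bigr)_{\ell_p}\to\F_p(\MM)$ with $I\circ R=\id$, but $R\circ I\neq\id$; this is precisely Kalton's decomposition lemma, and the image of $R$ is a proper subspace of ``coherent'' sequences, because the coronas overlap and every molecule supported in $C_n\cap C_{n+1}$ is recorded in a correlated way in several coordinates. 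On the other hand, your $\B(\MM)$ is a wedge (pieces glued at one point, with distances between pieces additive through the basepoint), so $\F_p(\B(\MM))$ genuinely is the \emph{full} sum $\bigl(\bigoplus_n\F_p(C_n,2^{-n}d)\bigr)_{\ell_p}$. Hence your argument would identify $\F_p(\MM)$ with the full sum, which is false in general. A concrete counterexample: take $\MM=\{0,x\}$ with $d(0,x)=1$. Then $x$ lies in the three coronas $C_{-1},C_0,C_1$, so your $\B(\MM)$ consists of the common basepoint plus three mutually separated copies of $x$, and $\F_p(\B(\MM))$ is $3$-dimensional while $\F_p(\MM)$ is $1$-dimensional; no isomorphism exists. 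The same overcounting of overlaps occurs for unbounded $\MM$. Repairing this would force you into a Pe\l czy\'{n}ski-type decomposition argument to absorb the complement — exactly the non-explicit route the theorem is designed to avoid — and for a general metric space even that is unavailable. (You anticipated trouble with the overlaps, but located it at the functoriality stage; it already destroys the isomorphism itself.)

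For comparison, the paper's construction avoids separating scales altogether: it rescales each point \emph{individually} inside the free space, setting $\B(\MM,\alpha)=\{0\}\cup\bigl\{\delta_\MM(x)/\alpha(d(x,0))\colon x\in\MM\setminus\{0\}\bigr\}\subset\F_p(\MM)$ for a Lipschitz $\alpha$ with $\sup_{t>0}t/\alpha(t)<\infty$ (e.g.\ $\alpha(t)=1+t$), so that $\B(\MM,\alpha)$ contains exactly one point per point of $\MM$ and is bounded. The isomorphism is then given by the explicit mutually inverse linearizations $P(\delta_\MM(x))=\zeta(x)\,\delta_{\B}(\mu(x))$ and $Q(\delta_{\B}(\mu(x)))=\mu(x)$, where $\zeta(x)=\alpha(d(0,x))$ and $\mu(x)=\delta_\MM(x)/\zeta(x)$; their boundedness is a short computation based on the two-term expansion
\begin{equation*}
\mu(x)-\mu(y)=\frac{1}{\zeta(x)}\bigl(\delta_\MM(x)-\delta_\MM(y)\bigr)+\frac{\zeta(y)-\zeta(x)}{\zeta(x)}\,\mu(y),
\end{equation*}
no decomposition of $\F_p(\MM)$ into pieces is needed, and functoriality under bi-Lipschitz maps is checked directly on the formula $\B(f)(\mu(x))=\mu(f(x))$.
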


We emphasize that  the isomorphism between $\F(\MM)$ and $\F(\B(\MM))$ advertised in Theorem~\ref{thm:A} is given by an explicit simple formula and preserves many properties of isometric nature.
 Although we obtained our result independently, with hindsight, the isomorphism from the statement of Theorem~\ref{thm:A}   in the $p = 1$ case could also be deduced from \cite{WeaverBook2018}*{Theorem 2.20},  which establishes an isomorphism between 
$\Lip_0(\MM)$ and $\Lip_0(\B(\MM\setminus{0}))\simeq \Lip_0(\B(\MM))$.
Here we focus on  the Lipschitz-free space $\F(\B(\MM))$ instead of on its dual $\Lip_0(\B(\MM)$,  and  push in another direction the techniques derived from the reductions that the  metric space $\B(\MM)$ provides.  See Remark~\ref{rem:otherAuthors} for more details.

Our second main result is that $\Lip_0(\MM)$ becomes a Banach algebra when equipped with a simply defined product for any metric space $\mathcal M$, without needing to change the regular norm of the space $\Lip_0(\MM)$.

\begin{thmx}\label{thm:B}
For an arbitrary metric space $\MM$ there is an explicit modification of the pointwise multiplication on $\Lip_0(\MM)$ given by a simple formula which turns $\Lip_0(\MM)$ with its regular norm into a Banach algebra.
\end{thmx}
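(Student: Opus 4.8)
The plan is to equip $\Lip_0(\MM)$ with a \emph{weighted} pointwise product and to show that, after fixing a harmless normalizing constant, the regular Lipschitz norm is submultiplicative for it. Write $\rho(x):=d(x,0)$ for the distance to the base point; then $\rho$ is $1$-Lipschitz, $\rho(0)=0$, and every $f\in\Lip_0(\MM)$ satisfies $|f(x)|=|f(x)-f(0)|\le\Lop(f)\,\rho(x)$. A convenient choice of weight is $w:=1+\rho$, which is bounded below by $1$, and I would define
\[
(f\star g)(x):=\frac{f(x)\,g(x)}{c\,w(x)}=\frac{f(x)\,g(x)}{c\,(1+d(x,0))},
\]
where $c>0$ will be pinned down by the norm estimate. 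Since $w\ge 1$ this is well defined, and $f(0)=g(0)=0$ forces $(f\star g)(0)=0$, so $f\star g$ is a bona fide candidate element of $\Lip_0(\MM)$ once we know it is Lipschitz.

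First I would record the purely algebraic features, which are immediate and independent of $c$: $\star$ is commutative and bilinear, and it is associative because both $(f\star g)\star h$ and $f\star(g\star h)$ collapse to the single symmetric expression $fgh/(c^2w^2)$ — having exactly one copy of the weight in the denominator of $\star$ is precisely what makes the two bracketings coincide. Thus $(\Lip_0(\MM),\star)$ is a commutative associative algebra for every normalization.

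The analytic heart is the submultiplicativity estimate. By homogeneity it suffices to take $\Lop(f)=\Lop(g)=1$, so that $|f|\le\rho$, $|g|\le\rho$, and $f,g,\rho$ are all $1$-Lipschitz, and to prove a pointwise bound $\bigl|\tfrac{f(x)g(x)}{w(x)}-\tfrac{f(y)g(y)}{w(y)}\bigr|\le C\,d(x,y)$ with an explicit absolute $C$; we then set $c:=C$. The main obstacle is that the naive route — expanding $f(x)g(x)-f(y)g(y)=f(x)(g(x)-g(y))+g(y)(f(x)-f(y))$, dividing by $w(x)$, and bounding $|f(x)|,|g(y)|$ by $\rho(x),\rho(y)$ — leaves a point-dependent factor $(\rho(x)+\rho(y))/(1+\rho(x))$ that is \emph{unbounded}, because it discards the cancellation between the variation of the numerator and that of the weight. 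The fix is to use the \emph{asymmetric} grouping
\[
\frac{f(x)g(x)}{w(x)}-\frac{f(y)g(y)}{w(y)}=\frac{f(x)}{w(x)}\bigl(g(x)-g(y)\bigr)+g(y)\left(\frac{f(x)}{w(x)}-\frac{f(y)}{w(y)}\right),
\]
in which the weight stays attached to $f$. The first term is at most $\tfrac{\rho(x)}{1+\rho(x)}\,d(x,y)\le d(x,y)$, since $|g(x)-g(y)|\le d(x,y)$ and $\rho/w\le 1$. For the second term one writes $\tfrac{f(x)}{w(x)}-\tfrac{f(y)}{w(y)}=\tfrac{f(x)(w(y)-w(x))+w(x)(f(x)-f(y))}{w(x)w(y)}$ and uses $|w(y)-w(x)|\le d(x,y)$ together with $|f|\le\rho$, getting a bound $\tfrac{(1+2\rho(x))\,d(x,y)}{(1+\rho(x))(1+\rho(y))}$; multiplying by $|g(y)|\le\rho(y)$ and using $\tfrac{\rho(y)}{1+\rho(y)}\le 1$ and $\tfrac{1+2\rho(x)}{1+\rho(x)}\le 2$ yields at most $2\,d(x,y)$. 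Altogether this gives $C=3$, so with $c=3$ we obtain $\Lop(f\star g)\le\Lop(f)\,\Lop(g)$. (The value $3$ is sharp: configurations with two points far from $0$ at nearly equal radii $R$ but close together, with $f,g$ varying opposite to $\rho$, force the constant to $3$ as $R\to\infty$, which is exactly why a modified product, rather than the plain pointwise one, is unavoidable.)

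Finally, completeness is automatic: $\Lip_0(\MM)=\F(\MM)^*$ is already a Banach space in its regular norm $\Lop(\cdot)$, and $\star$ is jointly continuous by the submultiplicative bound, so $(\Lip_0(\MM),\Lop(\cdot),\star)$ is a commutative Banach algebra with an unchanged norm, given by the advertised simple formula. The only genuinely delicate point is locating the correct grouping above and extracting a clean explicit constant; well-definedness, associativity, and completeness are routine.
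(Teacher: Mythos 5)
Your proof is correct and follows essentially the paper's own route: the paper defines $f\odot_\alpha g = fg/\alpha(d(\cdot,0))$ for a Lipschitz weight $\alpha$ with $\Mop(\alpha)=\sup_{t>0}t/\alpha(t)<\infty$, proves the same kind of direct two-point estimate (Lemma~\ref{lem:lipConstantMultiplication} gives $\Lip(f\odot_\alpha g)\le \Mop(\alpha)(\KK(\alpha)+2)\Lip(f)\Lip(g)$), and then chooses $\alpha(t)=3t$ to make the constant $1$; your product is precisely the member $\alpha(t)=3(1+t)$ of this same family, which also satisfies $\Mop(\alpha)(\KK(\alpha)+2)=1$. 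The only differences are cosmetic: your asymmetric two-term grouping replaces the paper's three-term expansion, and your weight is bounded below so you avoid the $0/0$ convention at the base point (at the cost that, by the paper's Lemma~\ref{lem:unit}, your algebra has no unit unless $0$ is isolated, whereas the paper's choice $\alpha(t)=3t$ yields a unital algebra).
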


Both Theorems~\ref{thm:A} and \ref{thm:B} hold in the context of real as well as complex Banach spaces; Theorem~\ref{thm:A} holds even in the general setting of $p$-Banach spaces. Usually in the literature on the subject, it is considered the case of real spaces only, but in the context of Banach algebras the complex field  is more natural.

Let us now describe the contents of the paper in some detail. Section~\ref{Sec:2} is devoted to the proof of Theorem~\ref{thm:A}. We prove the isomorphism between $\F_p(\MM)$ and $\F_p(\B(\MM))$ with a quantitative estimate of $3^{1/p}$, and we have $d_{\B_p(\MM)}(x,0)\leq 1$ for $x\in \B_p(\MM)$. The functoriality of $\B$ is shown at the end of the section. In Section~\ref{Sec:3}, we compare the topology and geometry of the metric spaces $\MM$ and $\B(\MM)$, which will be helpful for applications.

In Section~\ref{subsec:Multiplication} we prove Theorem~\ref{thm:B}, that is, we define a product on $\Lip_0(\MM)$ so that $\Lip_0(\MM)$ becomes a Banach algebra when $\MM$ is an unbounded metric space, and we study the duality with respect to $\F(\MM)$. In Section~\ref{Sec:4} then use our construction to transfer well-known results about the Lipschitz algebra $\Lip_0(\mathcal B)$, for a bounded metric space $\mathcal B$, to $\Lip_0(\mathcal M)$ when $\MM$ is unbounded. For instance, we deal with $w^*$-closed ideals in $\Lip_0(\MM)$ (see Theorem~\ref{thm:weakStarClosedSubalgebras} and Theorem~\ref{thm:weakStarClosedIdeals}), with the fact that the algebraic structure of $\Lip_0(\MM)$ determines the linear topological structure of $\F(\MM)$ (see Proposition~\ref{p:algHomBdd} and Theorem~\ref{thm:surprisignlyStrongResult}), and we identify the spectrum of the Banach algebra $\Lip_0(\MM)$ (see Theorem~\ref{thm:w*EqualsNorm} and Theorem~\ref{thm:spectrum}).

Finally, in Section~\ref{Sec:6} we present a small selection of known results where the proofs for bounded metric spaces are much easier than for unbounded ones. The usefulness of our techniques has to be seen in that they allow an easy transfer of the simpler proofs to the unbounded case.

\subsection{Terminology}
Let $(\MM, d)$ and $(\NN,d')$ be metric spaces, and let $f\colon\MM\to\NN$ be Lipschitz. The Lipschitz norm of $f$ is given by
\[
\Lip(f) =\sup_{x\not=y} \frac{d'(f(x),f(y))}{d(x,y)}<\infty.
\]
If $(\MM,0)$ is a pointed metric space, the linear space $\Lip_0(\MM)$ with $\Lip(\cdot)$ as a norm is a Banach space (real or complex, depending on the scalar field $\FF=\Rea$ or $\FF = \CC$). The choice of the distinguished point $0\in\MM$ does not affect the linear isometric structure of $\Lip_0(\MM)$.

We say that $\MM$ and $\NN$ are \emph{Lipschitz isomorphic}, and we put $\MM\simeq_{\Lip} \NN$, if there is a bi-Lipschitz bijection from $\MM$ onto $\NN$.

Given a pointed metric space $(\MM,d,0)$, we can construct $\F(\MM)$, the (real or complex) Lipschitz-free space over $\MM$, and a natural isometric enbedding $\delta_\MM\colon\MM\to\F(\MM)$. It is well-known that $\F(\MM)$ is the canonical predual of $\Lip_0(\MM)$. By $\mu(f)$, $f(\mu)$ or $\langle \mu,f \rangle$ we denote the evaluation of the functional given by $f\in\Lip_0(\MM)$ on $\mu\in\F(\MM)$. The definition of Lipschitz-free spaces and their basic properties in the general setting of $p$-metric spaces are summarized in Subsection~\ref{subsec:pComplex}.


\section{Universality of Lipschitz free $p$-spaces over bounded metric spaces for $0<p\le 1$ }\label{Sec:2}
\noindent
The aim of this section is to build a bounded metric space $\B$ from an unbounded metric space $\MM$ and to prove that $\F(\MM)\simeq \F(\B)$. The main result here is Theorem~\ref{thm:boundedEquivToUnbounded}. Since all our results hold true for real and complex $p$-Banach spaces ($p\in(0,1]$) we decided to describe everything in this general setting and gather everything the reader needs to know about $p$-Banach spaces, Lipschitz-free $p$-spaces, and the complex-scalar case in Subsection~\ref{subsec:pComplex}. Those who are not interested in $p$-metric and $p$-Banach spaces can safely skip the next subsection and replace $p$ with $1$ in the sequel; just keep in mind that $1$-metric space means metric space.

\subsection{Remarks about the case $p<1$ and the complex scalar case}\label{subsec:pComplex}
Let $0<p\leq 1$. Recall that $(\MM,d)$ is a $p$-metric space if $(\MM,d^{p})$ is a metric space. In turn, a $p$-Banach space is a (real or complex) vector space $X$ equipped with a map $\|\cdot\|_X:X\to [0,\infty)$ that satisfies all the usual properties of the norm with the exception that the triangle inequality is replaced with
\[
\|\gamma_1+\gamma_2\|^p_X\leq \|\gamma_1\|^p_X + \|\gamma_2\|^p_X,\qquad \gamma_1,\gamma_2\in X,
\]
and, moreover, $(X,\|\cdot\|_X)$ is complete.

Given a pointed metric space $(\MM,d,0)$ one can construct the so-called \emph{Lipschitz free $p$-space over $\MM$} as the completion of the vector space $\PP(\MM):=\spn\{\delta(x)\colon x\in\MM\setminus\{0\}\}$ of all finitely supported scalar valued functions on $\MM\setminus\{0\}$, where $\delta(x)$ denotes the characteristic function of $\{x\}$ with the $p$-norm
\[
\left\Vert\sum_{i=1}^n a_i \delta(x_i)\right\Vert=\sup \left\Vert\sum_{i=1}^n a_j f(x_i)\right\Vert_X,
\]
the supremum being taken over all $p$-Banach spaces $(X,\|\cdot\|_X)$ and all choices of $1$-Lipschitz maps $f\colon\MM\rightarrow X$ with $f(0)=0$. In the case $p=1$, by the Hahn-Banach theorem it is enough to take the supremum over all $1$-Lipschitz maps in $\Lip_0(\MM)$. Alternatively, the norm on $\PP(\MM)$ can be defined using a Kantorovich-Rubinstein type-formula, for which we refer to \cite{AACD2018}.

Given $0<p<q\le 1$, there is a canonical norm-one linear map
\[
E_{p,q,\MM} \colon \F_p(\MM) \to \F_q(\MM).
\]
It is known \cite{AACD2018}*{Proposition 4.20} that the $q$-Banach envelope of $\F_p(\MM)$ is $\F_q(\MM)$ with $q$-envelope map $E_{p,q,\MM}$.
In particular the Banach envelope of $\F_p(\MM)$ is $\F(\MM)$. Thus,
the dual space $(\F_p(\MM))^*$ is isometrically isomorphic to the Banach space $\Lip_0(\MM)$ (see e.g. \cite{AACD2018}*{Corollary 4.23}). The duality is given $\langle \delta(x), f \rangle = f(x)$ for $x\in\MM$ and $f\in\Lip_0(\MM)$. By $\delta(0)$ we often denote the origin in $\PP(\MM)$. Thus, we easily infer the following well-known observation which we record for further reference.

\begin{Lemma}\label{lem:w*nets}
Let $(\MM,d,0)$ be a pointed metric space, $(f_i)_{i\in I}$ be a bounded net in $\Lip_0(\MM)$, and $f\in \Lip_0(\MM)$. Then $w^*$\textendash$\lim_i f_i=f$ if and only if $\lim_i f_i=f$ pointwise.
\end{Lemma}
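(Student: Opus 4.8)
The plan is to exploit the duality $(\F_p(\MM))^*\cong\Lip_0(\MM)$ established just above, so that $w^*$-convergence in $\Lip_0(\MM)$ means convergence when tested against every element of the predual $\F_p(\MM)$. By definition, $w^*$-$\lim_i f_i = f$ means $\langle\mu,f_i\rangle\to\langle\mu,f\rangle$ for every $\mu\in\F_p(\MM)$. First I would handle the forward implication, which is essentially immediate: since the point evaluations arise from the molecules $\delta(x)\in\PP(\MM)\subseteq\F_p(\MM)$, specializing the test functional to $\mu=\delta(x)$ and using the duality formula $\langle\delta(x),g\rangle = g(x)$ gives $f_i(x)\to f(x)$ for each $x\in\MM$, which is exactly pointwise convergence.

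For the converse, suppose $(f_i)_{i\in I}$ is bounded in $\Lip_0(\MM)$ and $f_i\to f$ pointwise; I must upgrade this to $w^*$-convergence, i.e.\ to $\langle\mu,f_i\rangle\to\langle\mu,f\rangle$ for \emph{all} $\mu\in\F_p(\MM)$, not merely for the molecules. The strategy is a standard density-plus-equicontinuity argument. Pointwise convergence gives $\langle\mu,f_i\rangle\to\langle\mu,f\rangle$ on the dense subspace $\PP(\MM)=\spn\{\delta(x)\}$ by linearity; the boundedness hypothesis, say $\sup_i\Lip(f_i)=\sup_i\|f_i\|_{(\F_p(\MM))^*}\le C<\infty$, provides the uniform equicontinuity needed to pass from the dense subspace to all of $\F_p(\MM)$. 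Concretely, for arbitrary $\mu\in\F_p(\MM)$ and $\varepsilon>0$, choose $\nu\in\PP(\MM)$ with $\|\mu-\nu\|$ small; then estimate
\[
|\langle\mu,f_i-f\rangle|\le |\langle\mu-\nu,f_i-f\rangle| + |\langle\nu,f_i-f\rangle|,
\]
where the first term is controlled uniformly in $i$ by $(C+\|f\|)\|\mu-\nu\|$ (using that $\|f\|\le C$ as well, since the norm is $w^*$-lower semicontinuous or by a direct limit of the pointwise bound) and the second term tends to $0$ as $i$ increases because $\nu$ lies in the span of finitely many molecules.

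I expect the only genuine subtlety to be the standard one underlying the Banach–Alaoglu–type equivalence: a bounded net converges $w^*$ iff it converges on a dense subset of the predual, and the boundedness hypothesis is exactly what makes this work. Care must be taken that the bound $C$ is used in the $p$-norm estimate on the first term uniformly over the net, but since all the $f_i$ act as norm-bounded functionals on $\F_p(\MM)$, this is routine. No completeness or special $p<1$ difficulties arise here beyond having the duality pairing available, which the preceding subsection supplies.
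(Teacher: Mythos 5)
Your proof is correct and is precisely the argument the paper intends: Lemma~\ref{lem:w*nets} is recorded there without proof as an ``easily inferred'' consequence of the duality $(\F_p(\MM))^*\equiv\Lip_0(\MM)$ with pairing $\langle\delta(x),f\rangle=f(x)$, and your two steps (testing against the molecules $\delta(x)$ for the forward direction, and the standard bounded-net argument via density of $\PP(\MM)$ in $\F_p(\MM)$ for the converse) are exactly the routine verification being invoked. Nothing is missing; in particular your handling of the uniform bound $C+\Lip(f)$ on the dual norms is sound, since homogeneity of the $p$-norm still gives $|\langle\gamma,g\rangle|\le\|g\|\,\|\gamma\|$.
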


\subsection{Construction of the bounded metric space in Theorem~\ref{thm:A}}\label{subsec:main}
We start our construction of a bounded metric space from an arbitrary metric space (in general unbounded) by introducing the main ingredients.

\begin{Definition}\label{def:Bp}
Let $\alpha\colon(0,\infty)\to(0,\infty)$ be a map and $0<p\le 1$. Given a metric space $(\MM,d)$ we set
\[
\B_p(\MM,\alpha)=\{0\}\cup\left\{\frac{\delta_\MM(x)}{\alpha(d(x,0))}\colon x\in \MM\setminus\{0\}\right\}
\]
and $\B(\MM,\alpha)=\B_1(\MM,\alpha)$. Once $\alpha$ is clear, we write $\B_p(\MM)$ and $\B(\MM)$ instead of $\B_p(\MM,\alpha)$ and $\B(\MM,\alpha)$, respectively.
\end{Definition}

\begin{Convention}
The map $\alpha:(0,\infty)\to(0,\infty)$ from Definition~\ref{def:Bp} will often be Lipschitz. In this case, we will denote by $\alpha(0)$ the limit $\lim_{t\to 0^+} \alpha(t)$.
\end{Convention}

Let $\alpha\colon(0,\infty)\to(0,\infty)$ be a function and $p\in(0,1]$. Given a pointed metric space $(\MM,d,0)$ we consider the maps $\zeta_{\alpha}\colon\MM\to[0,\infty)$ given by
\begin{equation}\label{eq:notation:a}
\zeta_\alpha(x):=\begin{cases} 0 & \text{ if } x=0,\\
\alpha(d(0,x)) & \text{ if } x\neq 0,
\end{cases}
\end{equation}
and $\mu_{\alpha}\colon\MM\to \F_p(\MM)$ given by
\begin{equation}\label{eq:notation:b}
\mu_\alpha(x):=\begin{cases} 0 & \text{ if } x=0,\\
\displaystyle \frac{\delta_\MM(x)}{\zeta_\alpha(x)} & \text{ if } x\neq 0.
\end{cases}
\end{equation}
We will write $\zeta$ instead of $\zeta_\alpha$ and $\mu$ instead of $\mu_\alpha$ once $\alpha$ is clear. We have $\B_p(\MM,\alpha)= \mu(\MM)$.

We also need to introduce the constants
\[
\Mop(\alpha):=\sup_{t>0}\frac{t}{\alpha(t)}
\]
and
\[ 
\KK(\alpha)=\Lip(\alpha)\, \Mop(\alpha).
\]
Note that if $\Mop(\alpha)<\infty$ then $\lim_{t\to \infty} \alpha(t)=\infty$ and so
\[
\KK(\alpha)\ge \sup_{t>1} \frac{t}{\alpha(t)}\frac{|\alpha(t)-\alpha(1)|}{t-1}\ge 1.
\]
Lemma~\ref{lem:compbis} below provides estimates for the distance in the $p$-metric space $\B_p(\MM,\alpha)$ in terms of $\Mop(\alpha)$, $\Lop(\alpha)$, and the distance $d$ in $\MM$. Prior to proving it we give a couple of auxiliary results. With the convention $0/0=0$ we set
\begin{equation}\label{eq:notation:c}
d_\alpha(x,y)=\frac{d(x,y)}{\max\{\zeta_\alpha(x),\zeta_\alpha(y)\}}, \quad x,y\in\MM.
\end{equation}
\begin{Lemma}\label{lem:comp}
Let $\alpha\colon(0,\infty)\to(0,\infty)$ be a map, $(\MM,d,0)$ be a pointed metric space, and $0<p\le 1$. Let $\zeta$ and $\mu$ be as in \eqref{eq:notation:a} and \eqref{eq:notation:b}, respectively.
\begin{enumerate}[label={(\roman*)}, leftmargin=*, widest=iii]
\item\label{lem:comp:1} If $\Mop(\alpha)<\infty$, then $\B_p(\MM,\alpha)$ is a bounded subset of $\F_p(\MM)$. In fact, if $f\in \Lip_0(\MM)$ is given by $f(z)=d(0,z)$ for all $z\in\MM$, \[
\Vert \mu(x)\Vert_{\F_p(\MM)} =\mu(x)(f)=\frac{d(0,x)}{\zeta(x)} \le \Mop(\alpha), \quad x\in\MM\setminus\{0\}.
\]
\item\label{lem:comp:2} If $\alpha$ is Lipschitz, then $\zeta$ is Lipschitz on $\MM\setminus\{0\}$. Quantitatively,
\[
|\zeta(x)-\zeta(y)|\le \Lop(\alpha)\,d(x,y), \quad x, y\in \MM\setminus\{0\}.
\]
\end{enumerate}
\end{Lemma}

\begin{proof}
\ref{lem:comp:1} is clear from the definition, and \ref{lem:comp:2} follows by combining the Lipschitz condition with the triangle inequality.
\end{proof}

\begin{Lemma}\label{lem:37}
Let $\alpha\colon(0,\infty)\to(0,\infty)$ be a map and $(\MM,d,0)$ be a pointed metric space. Let $\zeta$, $\mu$ and $d_\alpha$ be as in \eqref{eq:notation:a}, \eqref{eq:notation:b}, and \eqref{eq:notation:c} respectively. Then, for every $y\in\MM\setminus\{0\}$ there are $f$, $g\in \Lip_0(\MM)$ with $\Lip(f) \le 1$ such that:
\begin{enumerate}[label={(\roman*)}, leftmargin=*, widest=ii]
\item\label{lem:37:1} $|(\mu(x)-\mu(y))(f)|\ge d_\alpha(x,y)$ whenever $x\in\MM\setminus\{0\}$ satisfies $\zeta(x)\ge \zeta(y)$; and
\item\label{lem:37:2} for every $x\in\MM\setminus\{0\}$ either
$|(\mu(x)-\mu(y))(f)|\ge d_\alpha(x,y)$ or $|(\mu(x)-\mu(y))(g)|\ge d_\alpha(x,y)$.
\end{enumerate}
\end{Lemma}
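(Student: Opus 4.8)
The plan is to give $f$ by an explicit formula and to confine the real work to a single regime. For \ref{lem:37:1} I would take the $1$-Lipschitz function $f(z)=d(z,y)-d(0,y)$, which lies in $\Lip_0(\MM)$ since $f(0)=0$. Because $\langle\delta(x),f\rangle=f(x)$, inserting $f(x)=d(x,y)-d(0,y)$ and $f(y)=-d(0,y)$ into the evaluation gives
\[
(\mu(x)-\mu(y))(f)=\frac{d(x,y)}{\zeta(x)}+d(0,y)\Big(\frac{1}{\zeta(y)}-\frac{1}{\zeta(x)}\Big).
\]
If $\zeta(x)\ge\zeta(y)$ the parenthesis is nonnegative and $\max\{\zeta(x),\zeta(y)\}=\zeta(x)$, so the right-hand side is at least $d(x,y)/\zeta(x)=d_\alpha(x,y)$. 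This settles \ref{lem:37:1}, and with it every $x$ satisfying $\zeta(x)\ge\zeta(y)$ in \ref{lem:37:2}.

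For the remaining points of \ref{lem:37:2} one has $\zeta(x)<\zeta(y)$, hence $\max\{\zeta(x),\zeta(y)\}=\zeta(y)$. A short sign analysis of the displayed identity shows that the same $f$ still works when $d(x,y)\ge d(0,y)$ (the second term is dominated) and when $d(x,y)\le d(0,y)\,\tfrac{\zeta(y)-\zeta(x)}{\zeta(y)+\zeta(x)}$ (where it overshoots on the negative side). Thus $g$ only has to cover the points with
\[
d(0,y)\,\frac{\zeta(y)-\zeta(x)}{\zeta(y)+\zeta(x)}<d(x,y)<d(0,y),\qquad \zeta(x)<\zeta(y),
\]
for which the required estimate reads $\bigl|\tfrac{g(x)}{\zeta(x)}-\tfrac{g(y)}{\zeta(y)}\bigr|\ge d(x,y)/\zeta(y)$.

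The heart of the matter is to produce one $g\in\Lip_0(\MM)$, depending on $y$ but not on $x$, meeting this bound simultaneously for all such $x$. Here I would exploit that the statement constrains only $\Lip(f)$, so $g$ may carry a larger (but controlled) Lipschitz constant. The mechanism I aim for is the \emph{lower branch}: if $g(y)=d(0,y)$ and $g(x)\le 0$ throughout the relevant region, then the strict inequality $\zeta(x)<\zeta(y)$ forces
\[
\Big|\frac{g(x)}{\zeta(x)}-\frac{g(y)}{\zeta(y)}\Big|=\frac{g(y)}{\zeta(y)}-\frac{g(x)}{\zeta(x)}\ \ge\ \frac{d(0,y)}{\zeta(y)}\ \ge\ \frac{d(x,y)}{\zeta(y)},
\]
the last step using $d(x,y)<d(0,y)$ on the region. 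I would realize such a $g$ from the two Lipschitz building blocks $d(0,\cdot)$ and $d(\cdot,y)$ of Lemma~\ref{lem:comp}\ref{lem:comp:2}, for instance as $g(z)=\max\{\,d(0,y)-C\,d(z,y),\,-d(0,z)\,\}$, which already satisfies $g(0)=0$ and $g(y)=d(0,y)$ and is nonpositive precisely where $d(z,y)\ge d(0,y)/C$; the slope $C$ is chosen to push the zero set of $g$ below the region.

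The obstacle I expect to be decisive is the tension between uniformity and the Lipschitz constant. The individually optimal separators $d(\cdot,x)-d(0,x)$ peak at their own centre $x$, so no single function can imitate all of them, and $g$ can only realize the weaker value $d_\alpha(x,y)$ rather than the true free-space distance. Keeping $\Lip(g)$ under control then forces one to treat separately the two ends of the region: points with $d(x,y)$ bounded away from $0$, handled by the nonpositive $g$ above with a moderate $C$, and points with $d(x,y)$ small (necessarily $\zeta(x)\approx\zeta(y)$, where the estimate is almost tight and the separators genuinely differ from direction to direction), which need a near-optimal, locally adapted choice glued into the same Lipschitz function. Verifying that this patching yields a single $g\in\Lip_0(\MM)$ with finite Lipschitz norm, and tracking the size of that norm, is the step that ultimately governs the quantitative constant in the distance estimates of Lemma~\ref{lem:compbis}.
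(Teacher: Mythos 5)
Your part~\ref{lem:37:1} coincides with the paper's proof: the same witness $f(z)=d(z,y)-d(0,y)$ and the same computation. Your reduction in part~\ref{lem:37:2} is also sound, and in fact slightly finer than what the paper records, since you verify that $f$ already covers both the regime $d(x,y)\ge d(0,y)$ and the overshoot regime $d(x,y)\le d(0,y)\,\tfrac{\zeta(y)-\zeta(x)}{\zeta(y)+\zeta(x)}$. The genuine gap is in the construction of $g$. Your mechanism demands $g(y)=d(0,y)>0$ while $g\le 0$ on the residual region, but that region in general accumulates at $y$ itself: take $\MM=[0,\infty)$ with base point $0$, $\alpha$ the identity and $y=1$; then $\zeta(x)=x$, and the residual region $\{x:\zeta(x)<\zeta(y),\ \tfrac{1-x}{1+x}<d(x,1)<1\}$ is all of $(0,1)$, whose closure contains $y$. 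Hence no continuous (let alone Lipschitz) $g$ can be nonpositive there and equal $d(0,y)=1$ at $y$; accordingly your candidate $g(z)=\max\{d(0,y)-C\,d(z,y),-d(0,z)\}$ fails for all $x$ in the region with $d(x,y)<d(0,y)/C$, whatever the value of $C$, as you yourself note. The ``patching'' you then invoke is never constructed, and it is exactly where the difficulty lives: one needs a single $g\in\Lip_0(\MM)$ with $g(y)=d(0,y)$ and $g(x)\le \tfrac{\zeta(x)}{\zeta(y)}\bigl(d(0,y)-d(x,y)\bigr)$ simultaneously for \emph{all} $x$ with $\zeta(x)<\zeta(y)$ and $d(x,y)<d(0,y)$; since this upper bound tends to $g(y)$ as $x\to y$, there is no room for gluing in cruder pieces near $y$.

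The missing idea is not a gluing but a multiplicative correction by $\zeta$. The paper sets $g=g_1g_2$ with $g_1=\tfrac{\zeta}{\zeta(y)}\wedge 1$ and $g_2=|f|\wedge d(0,y)$. On the residual region $g_1(x)=\zeta(x)/\zeta(y)$ and $g_2(x)=d(0,y)-d(x,y)$, so the factor $\zeta(x)$ cancels the denominator coming from $\mu(x)$ and yields exactly $(\mu(x)-\mu(y))(g)=-d_\alpha(x,y)$, uniformly on the whole region and with no case analysis near $y$; Lipschitzness of $g$ follows from Lemma~\ref{lem:comp}~\ref{lem:comp:2} together with the boundedness of $g_1$ and $g_2$. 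Note also that your closing worry about tracking $\Lip(g)$ is moot: the statement only requires $g\in\Lip_0(\MM)$, and the constant in Lemma~\ref{lem:compbis} comes from part~\ref{lem:37:1} (where $\Lip(f)\le 1$) together with a direct upper estimate; the function $g$ is used only for qualitative purposes, as in Proposition~\ref{lem:propertiesOfTransformation}~\ref{PoT:9}, where the size of its Lipschitz constant is irrelevant.
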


\begin{proof}
For $z\in\MM$ put $f(z)= d(z,y)-d(0,y)$. Then
\[
(\mu(x)-\mu(y))(f) = \frac{d(x,y)-d(0,y)}{\zeta(x)}+\frac{d(0,y)}{\zeta(y)}
\]
for all $x\in\MM\setminus\{0\}$.
We infer that $(\mu(x)-\mu(y))(f)\ge d_\alpha(x,y)$ in the case when $\zeta(y)\le \zeta(x)$ or $d(x,y)\ge d(0,y)$.
This proves \ref{lem:37:1}. To conclude the proof of \ref{lem:37:2}, we set
\[
g_1=\frac{\zeta}{\zeta(y)} \wedge 1 \text{,}\quad g_2=|f|\wedge d(0,y) \text{, and}\quad\; g=g_1g_2.
\]

Since $g_1$ is bounded and $g_2$ is continuous with $g_2(0)=0$, we deduce that $g$ is continuous at $0$ with $g(0)=0$.
By Lemma~\ref{lem:comp}~\ref{lem:comp:2}, $g_1|_{\MM\setminus\{0\}}$ is Lipschitz and so is $g_2|_{\MM\setminus\{0\}}$. Taking into account that $g_1$ and $g_2$ are bounded we infer that $g|_{\MM\setminus\{0\}}$ is Lipschitz. Hence, $g\in\Lip_0(\MM)$. Let $x\in\MM\setminus\{0\}$ be so that $\zeta(x)<\zeta(y)$ and $d(x,y)<d(0,y)$. Then $g_1(x)=\zeta(x)/\zeta(y)$, $g_1(y)=1$, $g_2(x)=d(0,y)-d(x,y)$, and $g_2(y)=d(0,y)$. Therefore,
\[
(\mu(x)-\mu(y))(g)=\frac{\zeta(x)(d(0,y)-d(x,y))}{ \zeta(y)\zeta(x)} -\frac{ d(0,y)}{\zeta(y)}=-d_\alpha(x,y).\qedhere
\]
\end{proof}

\begin{Lemma}\label{lem:compbis}
Let $\alpha\colon(0,\infty)\to(0,\infty)$ be a Lipschitz map with $\Mop(\alpha)<\infty$, $(\MM,d,0)$ be a pointed metric space, and $0<p\le 1$. Let $\zeta$ and $\mu$ be as in \eqref{eq:notation:a}, \eqref{eq:notation:b}, and \eqref{eq:notation:c} respectively. Then
\[
d_\alpha(x,y)\le \left\| \mu(x) - \mu(y) \right\|_{\F_p(\MM)}\leq (1+\KK^p(\alpha))^{1/p} d_\alpha(x,y), \quad x,y\in\MM.
\]
\end{Lemma}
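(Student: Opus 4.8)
The plan is to prove the two inequalities separately after a symmetric reduction. Since both $d_\alpha(\cdot,\cdot)$ and $\|\mu(\cdot)-\mu(\cdot)\|_{\F_p(\MM)}$ are symmetric in their arguments and vanish when $x=y$, I may assume $x\neq y$ and, interchanging $x$ and $y$ if necessary, that $\zeta(x)\ge\zeta(y)$; in particular $\max\{\zeta(x),\zeta(y)\}=\zeta(x)$ and $d_\alpha(x,y)=d(x,y)/\zeta(x)$. If $y=0$ then $\mu(y)=0$ and, by Lemma~\ref{lem:comp}~\ref{lem:comp:1}, the whole claim collapses to the identity $\|\mu(x)\|_{\F_p(\MM)}=d(0,x)/\zeta(x)=d_\alpha(x,0)$. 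Hence I may further assume $x,y\in\MM\setminus\{0\}$, so that $\zeta(x)\ge\zeta(y)>0$.

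For the lower bound I would invoke duality together with Lemma~\ref{lem:37}. Since $(\F_p(\MM))^{*}=\Lip_0(\MM)$ isometrically, every $h\in\Lip_0(\MM)$ satisfies $|(\mu(x)-\mu(y))(h)|\le\Lip(h)\,\|\mu(x)-\mu(y)\|_{\F_p(\MM)}$. Applying this to the function $f$ furnished by Lemma~\ref{lem:37}~\ref{lem:37:1}, which has $\Lip(f)\le1$ and, because $\zeta(x)\ge\zeta(y)$, satisfies $|(\mu(x)-\mu(y))(f)|\ge d_\alpha(x,y)$, yields $d_\alpha(x,y)\le\|\mu(x)-\mu(y)\|_{\F_p(\MM)}$ at once.

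The substantive part is the upper bound, which I would obtain from the explicit splitting
\[
\mu(x)-\mu(y)=\frac{\delta_\MM(x)-\delta_\MM(y)}{\zeta(x)}+\left(\frac{1}{\zeta(x)}-\frac{1}{\zeta(y)}\right)\delta_\MM(y).
\]
The $p$-subadditivity of $\|\cdot\|_{\F_p(\MM)}$ reduces the task to bounding the two summands. For the first, the isometry of the canonical embedding gives $\|\delta_\MM(x)-\delta_\MM(y)\|_{\F_p(\MM)}=d(x,y)$, so that summand has norm exactly $d(x,y)/\zeta(x)=d_\alpha(x,y)$. For the second, $\|\delta_\MM(y)\|_{\F_p(\MM)}=d(0,y)$ and $|\zeta(x)^{-1}-\zeta(y)^{-1}|=|\zeta(x)-\zeta(y)|/(\zeta(x)\zeta(y))$; using $|\zeta(x)-\zeta(y)|\le\Lop(\alpha)\,d(x,y)$ from Lemma~\ref{lem:comp}~\ref{lem:comp:2} and $d(0,y)/\zeta(y)\le\Mop(\alpha)$ from Lemma~\ref{lem:comp}~\ref{lem:comp:1}, that summand has norm at most $\Lop(\alpha)\Mop(\alpha)\,d(x,y)/\zeta(x)=\KK(\alpha)\,d_\alpha(x,y)$. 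Feeding both estimates into the $p$-triangle inequality gives $\|\mu(x)-\mu(y)\|_{\F_p(\MM)}^{p}\le(1+\KK^{p}(\alpha))\,d_\alpha(x,y)^{p}$, which is the claim after taking $p$-th roots.

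I expect the difficulty to be bookkeeping rather than conceptual: one must keep track of which point carries the larger $\zeta$-value (so that $d_\alpha(x,y)$ genuinely equals $d(x,y)/\zeta(x)$ and Lemma~\ref{lem:37}~\ref{lem:37:1} is applicable), and one must confirm that the canonical embedding $\delta_\MM$ is isometric and that $\|\cdot\|_{\F_p(\MM)}$ is $p$-subadditive throughout the range $0<p\le1$. It is worth noting that part~\ref{lem:37:2} of Lemma~\ref{lem:37} plays no role here, since the auxiliary function $g$ there is not controlled in Lipschitz norm; the symmetric reduction is precisely what allows the lower bound to rest solely on the norm-one function $f$ of part~\ref{lem:37:1}.
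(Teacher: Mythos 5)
Your proof is correct and follows essentially the same route as the paper's: the lower bound via the norm-one function from Lemma~\ref{lem:37}~\ref{lem:37:1} (after the same reduction to $\zeta(x)\ge\zeta(y)$), and the upper bound via the identical splitting $\mu(x)-\mu(y)=\zeta(x)^{-1}(\delta_\MM(x)-\delta_\MM(y))+\bigl(\zeta(x)^{-1}-\zeta(y)^{-1}\bigr)\delta_\MM(y)$ estimated with Lemma~\ref{lem:comp} and $p$-subadditivity. The only differences are cosmetic: you spell out the $y=0$ case and the duality step that the paper leaves implicit.
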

\begin{proof}
It suffices to consider the case when $x$, $y\in\MM\setminus\{0\}$ and $\zeta(x)\ge \zeta(y)$. The left hand-side inequality follows from Lemma~\ref{lem:37}~\ref{lem:37:1}. Using Lemma~\ref{lem:comp}, the expression
\[
\mu(x)-\mu(y)=\frac{1}{\zeta(x)}(\delta_\MM(x)-\delta_\MM(y))+\frac{\zeta(y)-\zeta(x)}{\zeta(x)}\mu(y),
\]
yields
\begin{align*}
\Vert\mu(x)-\mu(y)\Vert^p_{\F_p(\MM)}
&\le\frac{d^{p}(x,y)}{\zeta^p(x)}+\Mop^p(\alpha) \Lop^p(\alpha)\frac{d^{p}(x,y)}{\zeta^{\, p}(x)}\\
&=(1+\KK^p(\alpha)) d_\alpha^p(x,y).\qedhere
\end{align*}
\end{proof}

Proposition~\ref{prop:BL} below exhibits that, up to Lipschitz isomorphism, it suffices to consider $\B_p(\MM,\alpha)$ in the case when $p=1$ and $\alpha$ is either the identity map on $(0,\infty)$, which we will denote by $\alpha^{(0)}$, or the map $\alpha^{(1)}=1+\alpha^{(0)}$.

\begin{Remark}\label{rem:otherAuthors}
The metric space $\B:=\B(\MM,\alpha^{(0)})$ is in a sense natural and has previously appeared in the work of other authors. 
 Weaver (see \cite{WeaverBook2018}*{Definition~{2.17} and Lemma~{2.18}})
 considered $\B$ when investigating the lattice structure of $\Lip_0(\MM)$. To be more specific, he identified the \emph{complete lattice spectrum} of $\Lip_0(\MM)$ as the set $\B\setminus \{0\}$ (\cite{WeaverBook2018}*{Theorem 6.22}) and proved that there is a natural order-preserving surjective isomorphism between $\Lip_0(\MM)$ and $\Lip(\B\setminus\{0\})$
  (\cite{WeaverBook2018}*{Theorem 6.23}).
From a completely different  approach, Aliaga et al.\  proved that the set of extreme points of the positive cone of the unit ball of  $\F(\MM)$  coincides with our space $\B$ (see \cite{APP19}*{Theorem 3.8}).
\end{Remark}

Given two real-valued functions $f$ and $g$ we write $f\approx g$ to denote that there are constants $C,D>0$ such that $Cf\leq g\leq Df$.

\begin{Lemma}\label{lem:equivalpha}
Let $\alpha\colon(0,\infty)\to(0,\infty)$ be a Lipschitz map with $\Mop(\alpha)<\infty$. If $\alpha(0)=0$, then $\alpha\approx\alpha^{(0)}$; and, if $\alpha(0)>0$, then $\alpha\approx\alpha^{(1)}$.
\end{Lemma}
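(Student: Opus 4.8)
The plan is to derive everything from three elementary pointwise bounds on $\alpha$ and then split into the two cases. First I would note that letting $s\to 0^+$ in the Lipschitz inequality $|\alpha(t)-\alpha(s)|\le\Lip(\alpha)\,|t-s|$ yields the two-sided estimate
\[
\alpha(0)-\Lip(\alpha)\,t\le \alpha(t)\le \alpha(0)+\Lip(\alpha)\,t,\qquad t>0,
\]
while the very definition of $\Mop(\alpha)$ gives the complementary lower bound $\alpha(t)\ge t/\Mop(\alpha)$ for all $t>0$. I would also record that $\Mop(\alpha)<\infty$ forces $\Lip(\alpha)>0$, since a constant $\alpha$ would make $\Mop(\alpha)$ infinite; this guarantees that all the constants produced below are genuinely positive.

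For the case $\alpha(0)=0$ the conclusion is immediate. The upper estimate reads $\alpha(t)\le \Lip(\alpha)\,t$, and the $\Mop$\textendash bound gives $\alpha(t)\ge t/\Mop(\alpha)$. Since $\alpha^{(0)}(t)=t$, these two inequalities sandwich $\alpha$ and $\alpha^{(0)}$ between positive multiples of one another, which is exactly $\alpha\approx\alpha^{(0)}$.

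For the case $a:=\alpha(0)>0$ the upper bound is again routine: $\alpha(t)\le a+\Lip(\alpha)\,t\le \max\{a,\Lip(\alpha)\}\,(1+t)=\max\{a,\Lip(\alpha)\}\,\alpha^{(1)}(t)$. The only real work is the lower bound $\alpha(t)\ge C(1+t)$, and this is where I expect the main obstacle to lie: neither lower estimate is uniformly useful by itself, since $a-\Lip(\alpha)\,t$ controls $\alpha$ only for small $t$ and $t/\Mop(\alpha)$ only for large $t$, so one must rule out a dip of $\alpha$ towards $0$ in the intermediate range. The clean way around this is to take the \emph{maximum} of the two estimates: the map $t\mapsto\max\{a-\Lip(\alpha)\,t,\ t/\Mop(\alpha)\}$ is the larger of a decreasing and an increasing affine function, hence is first decreasing and then increasing, so its infimum over $(0,\infty)$ is attained at the crossing point of the two lines and equals $a/(1+\KK(\alpha))>0$. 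This furnishes the uniform positive lower bound $\alpha(t)\ge a/(1+\KK(\alpha))$.

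Finally I would combine this constant lower bound with the linear one. Adding $\alpha(t)\ge a/(1+\KK(\alpha))$ and $\alpha(t)\ge t/\Mop(\alpha)$ gives $2\alpha(t)\ge C_0(1+t)$ with $C_0=\min\{a/(1+\KK(\alpha)),\,1/\Mop(\alpha)\}>0$, so that $\alpha(t)\ge (C_0/2)\,\alpha^{(1)}(t)$. Together with the upper bound $\alpha(t)\le \max\{a,\Lip(\alpha)\}\,\alpha^{(1)}(t)$ this is precisely $\alpha\approx\alpha^{(1)}$, completing the argument.
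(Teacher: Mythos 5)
Your proof is correct, and it rests on the same two elementary inequalities that drive the paper's argument --- the Lipschitz bound $|\alpha(t)-\alpha(0)|\le \Lip(\alpha)\,t$ and the bound $\alpha(t)\ge t/\Mop(\alpha)$ --- but it is organized along a genuinely different route. The paper's proof decomposes the half-line at $t=1$: it asserts $\alpha(t)\approx t$ on $(0,1]$ when $\alpha(0)=0$, $\alpha(t)\approx 1$ on $(0,1]$ when $\alpha(0)>0$, and $\alpha(t)\approx t$ on $[1,\infty)$, leaving the verification of each local equivalence to the reader; in particular, the uniform positive lower bound for $\alpha$ near $0$ in the case $\alpha(0)>0$ --- which you rightly single out as the only nontrivial point --- is dispatched there implicitly (it follows, for instance, from positivity and continuity of the Lipschitz extension of $\alpha$ on the compact interval $[0,1]$). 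You avoid the decomposition altogether: observing that $\max\{\alpha(0)-\Lip(\alpha)t,\ t/\Mop(\alpha)\}$ is minimized at the crossing point of the two affine functions yields the clean global bound $\alpha(t)\ge \alpha(0)/(1+\KK(\alpha))$, and adding it to $\alpha(t)\ge t/\Mop(\alpha)$ gives the lower half of $\alpha\approx\alpha^{(1)}$ directly on all of $(0,\infty)$. What your route buys is quantitativeness --- every equivalence constant is explicit in $\alpha(0)$, $\Lip(\alpha)$, $\Mop(\alpha)$ and $\KK(\alpha)$, in the spirit of the explicit constants kept elsewhere in the paper --- together with the tidy observation that $\Mop(\alpha)<\infty$ forces $\Lip(\alpha)>0$, which rules out the degenerate constant case; what the paper's split-at-$1$ formulation buys is brevity, each piece being immediate once one does not insist on tracking the constants.
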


\begin{proof}
If $\alpha(0)=0$ we have $\alpha(t)\approx t$ for $0<t\le 1$. If $\alpha(0)>0$ we have $\alpha(t)\approx 1$ for $0<t\le 1$. Since $\alpha(t)\approx t$ for $t\ge 1$ we are done.
\end{proof}

\begin{Proposition}\label{prop:BL}
Let $(\MM,d,0)$ be a pointed metric space, let $0<p\le 1$, and let $\alpha\colon(0,\infty)\to(0,\infty)$ be a Lipschitz map with $\Mop(\alpha)<\infty$. Then:
\begin{enumerate}[label={(\roman*)}, leftmargin=*, widest=iii]
\item\label{prop:BL:1} $\B_p(\MM,\alpha)\simeq_{\Lip} \B(\MM,\alpha)$. To be precise, the identity map is a bi-Lipchitz map with distortion at most
$
(1+\KK^p(\alpha))^{1/p}.
$
\item\label{prop:BL:2} Either $\alpha(0)=0$, in which case $\B(\MM,\alpha) \simeq_{\Lip} \B(\MM,\alpha^{(0)})$, or $\alpha(0)>0$, in which case $\B(\MM,\alpha)\simeq_{\Lip}\B(\MM,\alpha^{(1)})$.
\item\label{prop:BL:3} If $0$ is an isolated point, $\B(\MM,\alpha^{(0)})\simeq_{\Lip} \B(\MM,\alpha^{(1)})$.
\end{enumerate}
\end{Proposition}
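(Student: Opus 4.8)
The plan is to handle the three items in order, using throughout the two-sided estimate of Lemma~\ref{lem:compbis} to reduce every comparison of the norm metrics on the various $\B$-spaces to a comparison of the auxiliary distances $d_\alpha$. Note first that, for any admissible $\alpha$, one has $\|\mu_\alpha(x)-\mu_\alpha(y)\|\ge d_\alpha(x,y)>0$ whenever $x\neq y$, so each $\mu_\alpha$ is injective and the sets $\B_p(\MM,\alpha)$ and $\B(\MM,\beta)$ are faithfully parametrized by $\MM$; this makes the maps below well defined and bijective for free.

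For \ref{prop:BL:1}, the two spaces carry the \emph{same} parametrization $\mu=\mu_\alpha$, so the identity map is $\mu(x)\mapsto\mu(x)$, realized by the envelope map $E_{p,1,\MM}$. Writing $d_p(x,y)=\|\mu(x)-\mu(y)\|_{\F_p(\MM)}$ and $d_1(x,y)=\|\mu(x)-\mu(y)\|_{\F(\MM)}$, the point that yields the \emph{sharp} constant is that $E_{p,1,\MM}$ has norm one and fixes each generator $\delta_\MM(x)$, whence $E_{p,1,\MM}(\mu(x)-\mu(y))=\mu(x)-\mu(y)$ gives $d_1\le d_p$; thus the identity $\B_p\to\B$ is $1$-Lipschitz. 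For its inverse, combine the upper bound $d_p\le(1+\KK^p(\alpha))^{1/p}\,d_\alpha$ of Lemma~\ref{lem:compbis} with its lower bound $d_\alpha\le d_1$ at $p=1$ to get $d_p\le(1+\KK^p(\alpha))^{1/p}\,d_1$. The product of the two Lipschitz constants is the announced distortion. A naive composition of the two sandwich estimates would only give the weaker bound $(1+\KK^p(\alpha))^{1/p}(1+\KK(\alpha))$, so using that $E_{p,1,\MM}$ is norm one is essential here.

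For \ref{prop:BL:2} and \ref{prop:BL:3} the relevant bijection is no longer the identity but $\mu_\alpha(x)\mapsto\mu_\beta(x)$, with $\beta\in\{\alpha^{(0)},\alpha^{(1)}\}$, induced by the common parametrization by $\MM$. I would first isolate the single statement driving both items: \emph{if $\alpha,\beta$ are Lipschitz with $\Mop<\infty$ and $\zeta_\alpha\approx\zeta_\beta$ on $\MM$, then $\B(\MM,\alpha)\simeq_{\Lip}\B(\MM,\beta)$}. Indeed, pointwise comparability $\zeta_\alpha\approx\zeta_\beta$ passes to the maxima $\max\{\zeta(x),\zeta(y)\}$ with the same constants, hence $d_\alpha\approx d_\beta$; feeding this into the two-sided estimates of Lemma~\ref{lem:compbis} for $\alpha$ and for $\beta$ shows the two norm metrics are comparable, so $\mu_\alpha(x)\mapsto\mu_\beta(x)$ is bi-Lipschitz. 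Item \ref{prop:BL:2} then follows at once from Lemma~\ref{lem:equivalpha}: $\alpha(0)=0$ gives $\alpha\approx\alpha^{(0)}$ and $\alpha(0)>0$ gives $\alpha\approx\alpha^{(1)}$, and in either case global comparability of the profiles forces $\zeta_\alpha\approx\zeta_\beta$.

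Item \ref{prop:BL:3} is the delicate one, and I expect it to be the main obstacle, since $\alpha^{(0)}(t)=t$ and $\alpha^{(1)}(t)=1+t$ are \emph{not} comparable as $t\to0^+$; thus Lemma~\ref{lem:equivalpha} does not apply and global comparability of the generating functions is unavailable. The remedy is to exploit that $0$ is isolated: there is $\delta>0$ with $d(0,x)\ge\delta$ for all $x\neq0$, so the only arguments at which $\alpha^{(0)}$ and $\alpha^{(1)}$ are evaluated inside $\zeta_{\alpha^{(0)}},\zeta_{\alpha^{(1)}}$ lie in $[\delta,\infty)$, where $t\le 1+t\le(1+1/\delta)\,t$. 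Consequently $\zeta_{\alpha^{(0)}}\approx\zeta_{\alpha^{(1)}}$ on all of $\MM$ despite the failure of global comparability, and the sub-lemma from the previous paragraph closes the argument. The key is precisely this localization: what the sub-lemma needs is comparability of the $\zeta$'s, not of the $\alpha$'s on $(0,\infty)$, and isolation of $0$ is exactly what confines the relevant arguments to a region where the two profiles agree up to constants.
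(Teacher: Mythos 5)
Your proof is correct and follows essentially the same route as the paper's: both reduce all comparisons to the auxiliary distance $d_\alpha$ via Lemma~\ref{lem:compbis}, obtain the sharp distortion in \ref{prop:BL:1} from the fact that $\Vert m\Vert_{\F(\MM)}\le\Vert m\Vert_{\F_p(\MM)}$ for molecules (i.e., that $E_{p,1,\MM}$ is norm one and fixes generators), deduce \ref{prop:BL:2} from Lemma~\ref{lem:equivalpha}, and prove \ref{prop:BL:3} by observing that when $0$ is isolated only values of $\alpha^{(0)},\alpha^{(1)}$ on $[\delta,\infty)$ enter, where the two are comparable. Your packaging of the common step as a sub-lemma on comparability of the $\zeta$'s is a cosmetic, not substantive, difference from the paper's equation \eqref{eq:equiv}.
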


\begin{proof}
Let $\zeta$ and $d_\alpha$ be as in \eqref{eq:notation:a} and \eqref{eq:notation:c}, respectively.
Let $\mu_q$ denote the function $\mu$ defined as in \eqref{eq:notation:b} corresponding to an index $q\in\{p,1\}$. Let $x$, $y\in\MM$. By lemma~\ref{lem:compbis},
\begin{equation*}
(1+\KK^p(\alpha))^{-1/p} \Vert\mu_p(x)-\mu_p(y)\Vert_{\F_p(\MM)}\le d_\alpha(x,y) \le \Vert\mu_1(x)-\mu_1(y)\Vert_{\F(\MM)}.
\end{equation*}
Since $\Vert m\Vert_{\F(\MM)}\le \Vert m \Vert_{\F_p(\MM)}$ for every molecule $m$, \ref{prop:BL:1} holds. Moreover,
\begin{equation}\label{eq:equiv}
\Vert\mu_1(x)-\mu_1(y)\Vert_{\F(\MM)}\approx d_\alpha(x,y), \quad x,y\in\MM.
\end{equation}

\ref{prop:BL:2} follows from combining Lemma~\ref{lem:equivalpha} with \eqref{eq:equiv}. If $d:=d(0,\MM\setminus\{0\})>0$, then $d_\alpha$ only depends on the values of $\alpha$ in the interval $[d,\infty]$. Therefore, since $\alpha^{(0)}(t) \approx \alpha^{(1)}(t)$ for $t\ge d$, $d_{\alpha^{(0)}}(x,y)\approx d_{\alpha^{(1)}}(x,y)$ for all $x$, $y\in \MM$. Applying \eqref{eq:equiv} yields \ref{prop:BL:3}.
\end{proof}

\begin{Theorem}\label{thm:boundedEquivToUnbounded}
Let $(\MM,d)$ be a metric space. There exists a bounded metric space $\B$ with $\F_p(\MM)\simeq \F_p(\B)$ for all $p\in(0,1]$. To be more specific, if $\alpha\colon (0,\infty)\to(0,\infty)$ is a Lipschitz function with $\Mop(\alpha) < \infty$, and $\zeta$ and $\mu$ are as in \eqref{eq:notation:a} and \eqref{eq:notation:b} respectively, then the linear operator $P_\alpha^F\colon \F_p(\MM)\to \F_p(\B(\MM,\alpha))$ defined by
\[
P_\alpha^F(\delta_\MM(x)) = \zeta(x)\delta_\B(\mu(x)), \quad x\in \MM\setminus\{0\},
\]
induces an isomorphism with inverse $Q_\alpha^F\colon \F_p(\B(\MM,\alpha))\to \F_p(\MM)$ given by
\[
Q_\alpha^F(\delta_\B(\mu(x)))=\mu(x), \quad x\in \MM\setminus\{0\}.
\]
Moreover, $\|P_\alpha^F\| \, \|Q_\alpha^F\|\leq C$, where
\[
C=\begin{cases} 1+ 2 \KK(\alpha) & \text{ if } p=1, \\ (1+\KK^p(\alpha) )^{1/p} (1+2\KK^p(\alpha))^{1/p} & \text{ if } p<1.\end{cases}
\]
\end{Theorem}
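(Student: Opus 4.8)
The plan is to verify that $P_\alpha^F$ and $Q_\alpha^F$ are bounded linear operators that are mutually inverse, and to track the norm constants carefully. First I would observe that $Q_\alpha^F$ is the natural candidate for the inverse: since $\B = \B(\MM,\alpha) = \mu(\MM)$ sits inside $\F_p(\MM)$, the map $\mu(x) \mapsto \mu(x)$ sending a point of $\B$ to itself viewed as an element of $\F_p(\MM)$ is $1$-Lipschitz with respect to the free-space norm on $\F_p(\B)$ (because $\delta_\B(\mu(x))$ has free-space norm $\|\mu(x)\|_{\F_p(\MM)}$ and the target distance $\|\mu(x)-\mu(y)\|_{\F_p(\MM)}$ is exactly the metric on $\B$). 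By the universal property of $\F_p$, the Lipschitz map $\mu\colon\B\to\F_p(\MM)$, $\mu(x)\mapsto\mu(x)$ (here $0\mapsto 0$), extends to a linear operator $Q_\alpha^F$ on the free space with $\|Q_\alpha^F\| = \Lip(\text{inclusion})$; I would bound this Lipschitz constant using the left inequality of Lemma~\ref{lem:compbis}, which gives $d_\alpha(x,y)\le\|\mu(x)-\mu(y)\|_{\F_p(\MM)}$, i.e.\ the inclusion of $\B$ into $\F_p(\MM)$ is at worst $1$-Lipschitz.

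Next I would handle $P_\alpha^F$ in the same functorial spirit. The assignment $x\mapsto\zeta(x)\delta_\B(\mu(x))$ is not a composition of $\delta$ with a map of metric spaces, so instead I would check directly that the linear map it defines is bounded by comparing molecules. The key computation is to estimate $\|P_\alpha^F(\delta_\MM(x)-\delta_\MM(y))\|_{\F_p(\B)} = \|\zeta(x)\delta_\B(\mu(x))-\zeta(y)\delta_\B(\mu(y))\|_{\F_p(\B)}$ from above in terms of $d(x,y)$. Writing $\zeta(x)\delta_\B(\mu(x))-\zeta(y)\delta_\B(\mu(y)) = \zeta(x)(\delta_\B(\mu(x))-\delta_\B(\mu(y))) + (\zeta(x)-\zeta(y))\delta_\B(\mu(y))$ and using the $p$-triangle inequality, I would bound the first term by $\zeta(x)\cdot\|\mu(x)-\mu(y)\|_{\F_p(\MM)}$ (using that the metric on $\B$ is the restriction of the $\F_p(\MM)$-norm, together with the upper bound in Lemma~\ref{lem:compbis}), and the second term by $|\zeta(x)-\zeta(y)|\cdot\|\mu(y)\|_{\F_p(\MM)}$, controlled by Lemma~\ref{lem:comp}~\ref{lem:comp:2} and Lemma~\ref{lem:comp}~\ref{lem:comp:1} respectively, i.e.\ by $\Lop(\alpha)d(x,y)\cdot\Mop(\alpha) = \KK(\alpha)d(x,y)$. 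Combining these and invoking the identity $\zeta(x)d_\alpha(x,y)=d(x,y)$ when $\zeta(x)\ge\zeta(y)$ should yield $\|P_\alpha^F(\delta_\MM(x)-\delta_\MM(y))\|_{\F_p(\B)}\le C' d(x,y)$ with the stated constants, after separating the cases $p=1$ and $p<1$.

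Having both operators bounded, I would then confirm they are mutually inverse, which is immediate on the dense subspace of finitely supported molecules: $Q_\alpha^F P_\alpha^F(\delta_\MM(x)) = Q_\alpha^F(\zeta(x)\delta_\B(\mu(x))) = \zeta(x)\mu(x) = \delta_\MM(x)$, and symmetrically $P_\alpha^F Q_\alpha^F(\delta_\B(\mu(x))) = P_\alpha^F(\mu(x)) = \zeta(x)^{-1}P_\alpha^F(\delta_\MM(x)) = \delta_\B(\mu(x))$, so both compositions are the identity on dense subspaces and hence everywhere by continuity. Finally I would assemble the norm bound $\|P_\alpha^F\|\,\|Q_\alpha^F\|\le C$ by multiplying the two estimates, being careful that the $p<1$ case requires the extra factor $(1+\KK^p(\alpha))^{1/p}$ coming from Lemma~\ref{lem:compbis} in the estimate for $Q_\alpha^F$ (or rather in reconciling the $\F_p(\MM)$-norm with the $\B$-metric). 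I expect the main obstacle to be bookkeeping the constants precisely in the $p<1$ regime, where the $p$-triangle inequality forces $p$-th powers throughout and one must match the two different geometric estimates (the upper bound in Lemma~\ref{lem:compbis} and the factor $\KK(\alpha)$) so that they combine into exactly the advertised constant rather than a weaker one.
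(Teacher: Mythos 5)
Your overall strategy coincides with the paper's: the same two operators, the same decomposition $\zeta(x)\left(\delta_\B(\mu(x))-\delta_\B(\mu(y))\right)+(\zeta(x)-\zeta(y))\delta_\B(\mu(y))$ estimated via Lemmas~\ref{lem:comp} and~\ref{lem:compbis}, extension to the free $p$-spaces via the universal property, and the verification that the two operators are mutually inverse on the dense spans of Dirac deltas. For $p=1$ your argument is correct and is essentially the paper's proof verbatim.

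For $p<1$, however, there is a genuine gap in how you treat the metric on $\B$. The space $\B(\MM,\alpha)$ of the statement is, by Definition~\ref{def:Bp}, a subset of $\F(\MM)=\F_1(\MM)$ and carries the metric inherited from the $\F(\MM)$-norm; the object you actually work with --- $\mu(\MM)$ metrized by the $\F_p(\MM)$-norm --- is the $p$-metric space $\B_p(\MM,\alpha)$, which for $p<1$ is a different (non-isometric) space. Consequently your claim that the inclusion of $\B$ into $\F_p(\MM)$ is $1$-Lipschitz is unjustified: the left inequality of Lemma~\ref{lem:compbis}, $d_\alpha(x,y)\le\|\mu(x)-\mu(y)\|_{\F_p(\MM)}$, bounds the $\F_p(\MM)$-distance from \emph{below} and so cannot yield Lipschitzness of the inclusion; since $\|m\|_{\F(\MM)}\le\|m\|_{\F_p(\MM)}$, the inclusion $\B(\MM,\alpha)\hookrightarrow\F_p(\MM)$ is in fact only $(1+\KK^p(\alpha))^{1/p}$-Lipschitz (combine the right-hand inequality of Lemma~\ref{lem:compbis} for $\F_p$ with the left-hand one for $\F$). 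The same conflation enters your estimate for $P_\alpha^F$, where you assert that ``the metric on $\B$ is the restriction of the $\F_p(\MM)$-norm''. The paper resolves this cleanly with Proposition~\ref{prop:BL}~\ref{prop:BL:1}: it first replaces $\B(\MM,\alpha)$ by $\B_p(\MM,\alpha)$, the identity map between them being bi-Lipschitz with distortion at most $(1+\KK^p(\alpha))^{1/p}$ --- which is precisely the extra factor in $C$ for $p<1$ --- and then all of your estimates hold, because the inclusion of $\B_p(\MM,\alpha)$ into $\F_p(\MM)$ is isometric. Your closing remark about ``reconciling the $\F_p(\MM)$-norm with the $\B$-metric'' anticipates exactly this step, but the proof as written never carries it out, and the mechanism you name for it points the wrong way. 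A secondary looseness: bounding $\|P_\alpha^F(\delta_\MM(x)-\delta_\MM(y))\|$ on elementary molecules alone does not control the operator norm; your computation should be phrased, as in the paper, as the Lipschitz estimate for the map $x\mapsto\zeta(x)\delta_\B(\mu(x))$ into the $p$-Banach space $\F_p(\B)$, followed by the linearization theorem \cite{AACD2018}*{Theorem 4.5}.
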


\begin{proof}
By Proposition~\ref{prop:BL}~\ref{prop:BL:1}, we can replace $\B(\MM,\alpha)$ with $\B:=\B_p(\MM,\alpha)$. By Lemma~\ref{lem:comp}~(i), $\B$ is bounded. For $\zeta$ and $\mu$ as in \eqref{eq:notation:a} and \eqref{eq:notation:b} respectively, consider the map $f\colon\MM\to \F_p(\B)$ defined by the formula
\[
f(x)=\zeta(x) \, \delta_\B(\mu(x)), \quad x\in\MM.
\]
Let us verify that $f$ is $(1+2\KK^p(\alpha))^{1/p}$-Lipschitz. Let $x$, $y\in \MM$, and set $D=\| f(x) - f (y)\|_{\F_p(\B)}$. Without loss of generality we may assume that $\zeta(x)\geq\zeta(y)$. The expansion
\[
f(x)-f(y)= \zeta(x) \, ( \delta_\B(\mu(x))- \delta_\B(\mu(y))) + (\zeta(x) -\zeta(y)) \, \delta_\B(\mu(y)),
\]
combined with Lemmas~\ref{lem:comp} and ~\ref{lem:compbis} gives
\begin{align*}
D^p
&\le \zeta^p(x) \Vert \mu(x)-\mu(y)\Vert_{\F_p(\MM)}^p +|\zeta(x)-\zeta(y)|^p \Vert \mu(y)\Vert_{\F_p(\MM)}^p\\
&\le (1+\KK^p(\alpha)) d^p (x,y) + \Lip^p(\alpha) d^p(x,y) \Mop^p(\alpha) \\
& =(1+2\KK^p(\alpha))d^p(x,y).
\end{align*}
Then, \cite{AACD2018}*{Theorem 4.5} yields a bounded linear operator $P_\alpha^F\colon\F_p(\MM)\to \F_p(\B)$ such that
\[
P_\alpha^F(\delta_{\MM}(x)) = \zeta(x)\, \delta_\B(\mu(x)), \quad x\in\MM.
\]
Further, since the inclusion of $\B$ into $\F_p(\MM)$ is $1$-Lipschitz, appealing again to \cite{AACD2018}*{Theorem 4.5}, we infer that there is a norm-one linear operator $Q_\alpha^F\colon\F_p(\B)\to \F_p(\MM)$ with
\[
Q_\alpha^F(\delta_\B(\mu(x))) = \mu(x), \quad x\in\MM.
\]
It is clear from the definition that $P_\alpha^F$ restricted to $\spn\{\delta_\MM(x) \colon x\in\MM\setminus\{0\}\}$ is a linear bijection onto $V=\spn\{\delta_\B(x) \colon x\in\B\setminus\{0\}\}$ with inverse $Q_\alpha^F|_V$. Therefore, $P_\alpha^F$ and $Q_\alpha^F$ are inverse isomorphisms of each other.
\end{proof}

We close this section with a couple of examples that we work out for simplicity in the case of real Banach spaces, that is, $\FF=\Rea$.

\begin{Example}
Consider the metric space $\Nat\cup\{0\}$ endowed with the Euclidean distance. The Lipschitz-free space $\F(\Nat\cup\{0\})$ is isometric to $\ell_1$ via the map $\delta(n)\mapsto\sum_{j=1}^n \ee_j$, where $\ee_j$ denotes the $j$th unit vector, $n\in\Nat$. Hence, if we set
\[
\B=\{0\}\cup \left\{ \frac{1}{n}\sum_{j=1}^n \ee_j \colon n\in\Nat\right\} \subset\ell_1,
\]
we have $\F_p(\Nat\cup\{0\})\simeq \F_p(\B)$ for every $0<p\le 1$.
\end{Example}

\begin{Example}
Consider the metric space $\Rea^+=[0,\infty)$ endowed with the Euclidean distance. The Lipschitz-free space $\F(\Rea^+)$ is isometric to $L_1(\Rea^+)$. Namely, the map $\delta(x)\mapsto\chi_{[0,x)}$ extends to an isometry. Hence, if we consider
\begin{align*}
\B_0&=\{0\}\cup \left\{ \frac{1}{x} \chi_{[0,x)} \colon x>0\right\},\\
\B_1& =\left\{ \frac{1}{x+1} \chi_{[0,x)} \colon x\ge 0\right\}
\end{align*}
as subsets of $L_1(\Rea^+)$, we have $\F_p(\Rea^+)\simeq \F_p(\B_0)\simeq \F_p(\B_1)$ for every $0<p\le 1$.
\end{Example}

\subsection{The functorial character of our construction}
Recall that the construction of Lipschitz-free spaces is functorial in nature. Indeed, there is a functor $\F$ from the category of pointed metric spaces equipped with Lipschitz maps between them preserving the base points to the category of Banach spaces. To each pointed metric space $\MM$, the functor $\mathcal F$ associates a Lipschitz-free space $\F(\MM)$ and to each Lipschitz map $f\colon \MM\rightarrow \NN$, with $f(0)=0$, it associates its linear extension between $\F(\MM)$ and $\F(\NN)$. Moreover, $\F$ is a left-adjoint functor to the forgetfull functor from the category of Banach spaces to the category of pointed metric spaces.

It is very natural to investigate to which extent the construction of a bounded metric space $\B(\MM)$ from a metric space $\MM$ is functorial too. We shall next show that it is functorial in a weak sense - namely, when we restrict the class of morphisms to bi-Lipschitz zero-preserving maps only (not necessarily bijections).

For any zero-preserving Lipschitz map $f\colon\MM\rightarrow\NN$ between two pointed metric spaces and any $0<p\le 1$ there is a canonical linear extension $ L_f\colon\F_p(\MM)\rightarrow\F_p(\NN)$. If $P_{p,\MM}$ and $P_{p,\NN}$ denote the linear isomorphisms provided by Theorem~\ref{thm:boundedEquivToUnbounded},
we obtain a  bounded linear map $\B(f)\colon \F_p(\B(\MM))\rightarrow\F_p(\B(\NN))$ making the diagram

\begin{equation*}
\begin{gathered}
\xymatrix{
\F_p(\MM) \ar@{->}[r]^-{L_f} \ar@{->}[d]^-{P_\MM} & \F_p(\NN) \ar@{->}[d]^-{P_\NN} \\
\F_p(\B(\MM)) \ar@{-->}[r]^-{\B(f)} & \F_p(\B(\NN))
}
\end{gathered}
\end{equation*}
commute. A straightforward computation yields
\[
\B(f)(\delta(\mu(x)))=\frac{\zeta(f(x))}{\zeta(x)} \delta(\mu(f(x))),\qquad x\in\MM\setminus\{0\}.
\]
Therefore $\B(f)$ is not
a linear extension of a Lipschitz map between $\B(\MM)$ and $\B(\NN)$ unless
$\zeta(f(x))=\zeta(x)$ for all $x\in\MM\setminus\{0\}$,
as it happens for example when $f$ is an isometric embedding, and the construction is carried out with the same function $\alpha$ in both metric spaces.

However, when the category is restricted to the class of bi-Lipschitz morphisms, then the construction is functorial in the regular sense.  In the  proposition below, $\id$ denotes the identity map on a given metric space.
\begin{Proposition}\label{prop:functorial}
Let $\alpha\colon (0,\infty)\rightarrow (0,\infty)$ be a Lipschitz map. For any bi-Lipchitz zero-preserving map $f\colon\MM\to\NN$ between pointed metric spaces let us consider $\B(f)\colon\B(\MM)\to \B(\NN)$ defined as
\[\B(f)(\mu(x)):=\mu(f(x)),\quad x\in \MM,\]
where $\mu$ is as in \eqref{eq:notation:b}.
Then for every bi-Lipchitz zero-preserving maps $f$, $g\colon\MM\to\NN$ between pointed metric spaces we have:
\begin{enumerate}[label=(\roman*), leftmargin=*,widest=ii]
\item $\B(\id)=\id$ and $\B(f\circ g) = \B(f)\circ \B(g)$, and
\item $\B(f)$ is a bi-Lipchitz zero-preserving map.
\end{enumerate}
\end{Proposition}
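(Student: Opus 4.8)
The plan is as follows. First I would record the elementary but crucial observation that the map $\mu=\mu_\MM\colon\MM\to\B(\MM)$ of \eqref{eq:notation:b} is a \emph{bijection} onto $\B(\MM)$: it is surjective by the very definition $\B(\MM)=\mu(\MM)$, and it is injective because the molecules $\delta_\MM(x)$, $x\in\MM\setminus\{0\}$, are linearly independent in $\F_p(\MM)$ and $\zeta(x)>0$, so $\mu(x)=\mu(y)$ forces $x=y$ (and $\mu(x)=0$ only for $x=0$). Consequently $\B(f)$ is well defined by the stated formula; in fact $\B(f)=\mu_\NN\circ f\circ\mu_\MM^{-1}$, the conjugate of $f$ by the coordinate bijections. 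From this description part (i) is immediate and purely formal: $\B(\id_\MM)(\mu(x))=\mu(x)$ gives $\B(\id)=\id$, while for composable $f$, $g$ one has $\B(f\circ g)(\mu(x))=\mu(f(g(x)))=\B(f)(\mu(g(x)))=(\B(f)\circ\B(g))(\mu(x))$.

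For part (ii), that $\B(f)$ is zero-preserving is clear from $\B(f)(0)=\mu_\NN(f(0))=\mu_\NN(0)=0$, using $f(0)=0$. The substance is the bi-Lipschitz claim. Write $d$, $d'$ for the metrics on $\MM$, $\NN$, and $\zeta_\MM$, $\zeta_\NN$, $d_\alpha^\MM$, $d_\alpha^\NN$ for the associated quantities of \eqref{eq:notation:a} and \eqref{eq:notation:c}. Since $\Mop(\alpha)<\infty$ (as is required for $\B$ to be a bounded metric space), Lemma~\ref{lem:compbis} shows that the distance of $\mu_\MM(x)$ to $\mu_\MM(y)$ in $\B(\MM)$ is comparable to $d_\alpha^\MM(x,y)$, and likewise in $\B(\NN)$; recall \eqref{eq:equiv}. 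Hence it suffices to prove
\[
d_\alpha^\NN(f(x),f(y))\approx d_\alpha^\MM(x,y),\qquad x,y\in\MM,
\]
with constants depending only on $f$ and $\alpha$. Comparing \eqref{eq:notation:c} for the two spaces, I would treat the numerator and the denominator separately. The numerators are comparable because $f$ is bi-Lipschitz, so $d'(f(x),f(y))\approx d(x,y)$.

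The denominators require the heart of the argument: since $f$ is zero-preserving and bi-Lipschitz, $d'(0,f(z))=d'(f(0),f(z))\approx d(0,z)$, and I then need
\[
\zeta_\NN(f(z))=\alpha(d'(0,f(z)))\approx\alpha(d(0,z))=\zeta_\MM(z),
\]
that is, that \emph{$\alpha$ sends comparable arguments to comparable values}. This is where Lemma~\ref{lem:equivalpha} enters: it reduces $\alpha$, up to the relation $\approx$, to either $\alpha^{(0)}(t)=t$ or $\alpha^{(1)}(t)=1+t$, and for $\beta$ of either form one checks directly that $\min(c,1)\,\beta(t)\le\beta(ct)\le\max(c,1)\,\beta(t)$ for all $c,t>0$; monotonicity of $\beta$ then yields $\beta(s)\approx\beta(t)$ whenever $s\approx t$, hence $\alpha(s)\approx\alpha(t)$. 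Taking the maximum over $z\in\{x,y\}$ gives $\max\{\zeta_\NN(f(x)),\zeta_\NN(f(y))\}\approx\max\{\zeta_\MM(x),\zeta_\MM(y)\}$, so the denominators are comparable as well; the degenerate cases where $x$ or $y$ equals $0$ are covered by the same estimates together with the convention $0/0=0$. Combining the numerator and denominator comparisons proves the displayed equivalence, and hence that $\B(f)$ is bi-Lipschitz. The one genuinely non-formal point, and the step I expect to need the most care, is precisely this stability of $\alpha$ under comparable rescalings of its argument, which is exactly what Lemma~\ref{lem:equivalpha} is designed to supply.
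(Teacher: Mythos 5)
Your proposal is correct and takes essentially the same route as the paper's proof: both reduce the problem, via Lemma~\ref{lem:compbis}, to comparing $d_\alpha$ in $\MM$ with $d_\alpha$ in $\NN$, split that comparison into the numerator ratio (controlled by the bi-Lipschitz constants of $f$) and the denominator ratio (controlled by invoking Lemma~\ref{lem:equivalpha} to replace $\alpha$ by the increasing function $\alpha^{(0)}$ or $\alpha^{(1)}$ and then using monotonicity). The only cosmetic difference is that the paper bounds one Lipschitz constant and appeals to symmetry, whereas you prove the two-sided equivalence $d_\alpha^\NN(f(x),f(y))\approx d_\alpha^\MM(x,y)$ directly.
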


\begin{proof}
We only verify that $\B(f)$ is a Lipschitz map between $\B(\MM)$ and $\B(\NN)$. By symmetry, it will follow that it is bi-Lipschitz, and it is clear that $\B(\id)=\id$ and $\B(f\circ g)=\B(f)\circ\B(g)$.

Fix $x,y\in \MM$. Let us estimate the ratio
\[
\frac{\|\B(f)(\mu(x))-\B(f)(\mu(y))\|}{\|\mu(x)-\mu(y)\|}=\frac{\|\mu(f(x))-\mu(f(y))\|}{\|\mu(x)-\mu(y)\|}.
\] By Lemma~\ref{lem:compbis},
\[
d_\alpha(x,y)\le \left\| \mu(x) - \mu(y) \right\|_{\F_p(\MM)}\leq (1+\KK^p(\alpha))^{1/p} d_\alpha(x,y),
\]
and analogously for $f(x)$ and $f(y)$.
It follows that
\[
\frac{\|\mu(f(x))-\mu(f(y))\|}{\|\mu(x)-\mu(y)\|} \leq \frac{(1+\KK^p(\alpha))^{1/p} d_\alpha(f(x),f(y))}{d_\alpha(x,y)},
\]
so it is enough to provide a uniform bound for
\[
\frac{d_\alpha(f(x),f(y))}{d_\alpha(x,y)}=A(x,y) B(x,y),
\]
where 
\[
A(x,y)=\frac{d(f(x),f(y))}{d(x,y)} \text{ and } B(x,y)=\frac{\max\{\zeta(x),\zeta(y)\}}{\max\{\zeta(f(x)),\zeta(f(y))\}}.
\]
Since
$A(x,y)\le \Lip(f)$,
it suffices to obtain a uniform bound for
$B(x,y)$. To that end, without loss of generality
we assume  that
$\zeta(y)\le\zeta(x)$.
By Lemma~\ref{lem:equivalpha} we may assume that $\alpha=\alpha^{(0)}$ or $\alpha=\alpha^{(1)}$; in particular $\alpha$ is increasing. Then,
\[
B(x,y)
\leq \frac{\zeta(x)}{\zeta(f(x))}\leq \frac{\alpha(d(x,0))}{\alpha(\Lip(f^{-1})d(x,0))},
\]
 and considering separately the cases $\alpha=\alpha^{(0)}$ and $\alpha=\alpha^{(1)}$ it is routine to check   that the right-hand side term is uniformly bounded.
Note that the last inequality is the only place where we used that
 the inverse of $f$
is Lipschitz.
\end{proof}

\begin{Remark}\label{rmk:Lj}
It is known that if $\MM$ is a metric space and $\NN\subset \MM$, the canonical  linearization $L_j\colon\F_p(\NN)\to\F_p(\MM)$ of the inclusion map $j\colon \NN\to\MM$
need not be an isometry for $p<1$. The question of whether it is an isomorphism is open as of today (see \cite{AACD2018}*{Section 6}).  Since $\B(j)=L_j$,
an interesting consequence of Proposition~\ref{prop:functorial} is that the question reduces to bounded metric spaces; indeed,   if the answer were positive for bounded metric spaces $\MM$ then the answer would be positive also for unbounded ones.
\end{Remark}

\begin{Remark}
It follows directly from the construction that if $(\MM,d,0)$ is a metric space and $0\in\NN\subset\MM$, then $\B(\NN)\subset \B(\MM)$ via the canonical isometry. It also follows from the proof of Proposition~\ref{prop:functorial} that if $f\colon\MM\to\NN$ is a zero-preserving isometry and $\alpha = \alpha^{(0)}$, then $\B(f)\colon\B(\MM)\to\B(\NN)$ is 
 an isometry
as well.
\end{Remark}

\begin{Remark}
Our construction also has a functorial behavior with respect to the family of spaces that arise if $p$ is allowed to take values in interval $(0,1]$: for any pointed metric space $\MM$ and any $0<p<q\le 1$ the diagram
\begin{equation}\label{eq:pqdiagram}
\begin{gathered}
\xymatrix{
\F_p(\MM) \ar@{->}[r]^-{P_{p,\MM}} \ar@{->}[d]^-{E_{p,q,\MM}} & \F_p(\B(\MM)) \ar@{->}[d]^-{E_{p,q,\B(\MM)}} \\
\F_q(\MM) \ar@{->}[r]^-{P_{q,\MM}} & \F_q(\B(\MM)).
}
\end{gathered}
\end{equation}
commutes. Another open problem from \cite{AACD2018} is whether the envelope map $E_{p,1,\MM}\colon\F_p(\MM)\to\F(\MM)$ is one-to-one for any metric space $\MM$ and any $0<p<1$. By \eqref{eq:pqdiagram}, to address this question it suffices to consider the case when $\MM$ is bounded. We recall that $E_{p,1,\MM}$  is always one-to-one on $\PP(\MM)$, but this does not guarantee its injectivity on its completion $\F_p(\MM)$ (see \cite{AAW2020} for a discussion about the injectivity of envelope maps).
\end{Remark}

\section{The metric space $\MM$ versus the bounded metric space $\B(\MM,\alpha)$}\label{Sec:3}
\noindent
In this section we compare the Lipschitz structures and the topological structures of the unbounded metric space $\MM$ on one hand, and the bounded metric space $\B(\MM,\alpha)$ built from $\MM$ and a Lipschitz function $\alpha$ with $\Mop(\alpha)<\infty$ on the other. We prove that $\MM$ and $\B(\MM,\alpha)$ are topologically homeomorphic whenever $0\in\MM$ is an isolated point.

The assumption that $0\in\MM$ is isolated is not so strong. Indeed, if $\MM$ does not have isolated points we add an isolated point $*$ to our space $\MM$ so that $\F_p(\MM\cup\{*\})\simeq \F_p(\MM)$ for every $0<p\leq 1$ (see e.g. \cite{AACD2020}*{Lemma 2.8}). This way we obtain a homeomorphism between $\MM$ and $\B:=\B(\MM\cup\{*\},\alpha)\setminus \{0\}$ such that $\F_p(\MM)\simeq \F_p(\B)$.

Everything that follows in this section holds for both real and complex spaces, because we are using exclusively the properties of Lipchitz-free spaces summarized in Subsection~\ref{subsec:pComplex}.

\begin{Lemma}\label{lem:10}
Let $(\MM,d,0)$ be a metric space, and let $\alpha$ be a map from $(0,\infty)$ into $(0,\infty)$.

\begin{enumerate}[label={(\roman*)}, leftmargin=*, widest=viii]
\item\label{lem:10:2}Let $d>0$ and $x\in\MM\setminus\{0\}$ be such that $d(0,x)\ge d$. If $\alpha$ is Lipschitz then
\[
\Vert \mu(x)\Vert_{\F(\MM)} \ge\min\left\{ \frac{1}{\Lip(\alpha)} ,\frac{d}{\alpha(d)} \right\}.
\]
\item\label{lem:10:3}Suppose that $\alpha$ is Lipschitz, that $\Mop(\alpha)<\infty$, and that either $\alpha(0)>0$ or $0$ is an isolated point of $\MM$. Then there is a constant $C<\infty$ such that
\[
\Vert \mu(x)\Vert_{\F(\MM)} \le C d(0,x), \quad x\in\MM\setminus\{0\}.
\]
In fact, we can choose $\inf_{x\not=0} \alpha(d(0,x))=1/C$.
\item\label{lem:10:4}Suppose that $\alpha$ is Lipschitz and $\alpha(0)=0$. Then
\[
\Vert \mu(x)\Vert_{\F(\MM)} \ge \frac{1}{\Lip(\alpha)}, \quad x\in\MM\setminus\{0\}.
\]
\end{enumerate}
\end{Lemma}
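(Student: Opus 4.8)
The plan is to reduce all three statements to elementary properties of the single real-variable function $t\mapsto t/\alpha(t)$. The key observation is that, for $x\neq 0$, the element $\mu(x)=\delta_\MM(x)/\zeta(x)$ is a scalar multiple of the elementary molecule $\delta_\MM(x)$, whose free-space norm is exactly $d(0,x)$: testing against the $1$-Lipschitz map $z\mapsto d(z,0)$ gives the lower bound, while $1$-Lipschitzness of any competitor gives the matching upper bound. This is the content of the computation in Lemma~\ref{lem:comp}\ref{lem:comp:1}, and crucially the equality holds for every $p$ and \emph{without} any finiteness assumption on $\Mop(\alpha)$ (only the final estimate by $\Mop(\alpha)$ uses it). Since $\zeta(x)=\alpha(d(0,x))>0$, homogeneity of the norm yields
\[
\|\mu(x)\|_{\F(\MM)}=\frac{d(0,x)}{\alpha(d(0,x))},\qquad x\in\MM\setminus\{0\}.
\]
Writing $t:=d(0,x)$, each of the three claims becomes an assertion about $t/\alpha(t)$.

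Item \ref{lem:10:4} is then immediate: as $\alpha$ is Lipschitz with $\alpha(0)=0$, we have $\alpha(t)=|\alpha(t)-\alpha(0)|\le\Lip(\alpha)\,t$, whence $t/\alpha(t)\ge 1/\Lip(\alpha)$. For item \ref{lem:10:2} I would use the Lipschitz upper bound $\alpha(t)\le\alpha(d)+\Lip(\alpha)(t-d)$, valid for $t\ge d$, and split into two cases according to the sign of $\alpha(d)-\Lip(\alpha)\,d$. If $\alpha(d)\le\Lip(\alpha)\,d$, the bound simplifies to $\alpha(t)\le\Lip(\alpha)\,t$, so $t/\alpha(t)\ge 1/\Lip(\alpha)$. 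If instead $\alpha(d)\ge\Lip(\alpha)\,d$, then a short manipulation of the desired inequality $t\,\alpha(d)\ge d\,\alpha(t)$ (inserting the upper bound and using $t\ge d$) reduces it exactly to the case hypothesis $\alpha(d)\ge\Lip(\alpha)\,d$, giving $t/\alpha(t)\ge d/\alpha(d)$. In either case the minimum of the two quantities is a lower bound, as required.

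For item \ref{lem:10:3} it suffices to produce $C<\infty$ with $\alpha(d(0,x))\ge 1/C$ for all $x\neq 0$, since then $\|\mu(x)\|_{\F(\MM)}=d(0,x)/\alpha(d(0,x))\le C\,d(0,x)$; the sharp choice is $1/C=\inf_{x\neq 0}\alpha(d(0,x))$, so the task is to show this infimum is strictly positive. Now $\alpha$ is continuous and positive on $(0,\infty)$, and $\Mop(\alpha)<\infty$ forces $\alpha(t)\ge t/\Mop(\alpha)\to\infty$, so $\alpha$ is bounded away from zero for large arguments. The only danger is the behaviour near $0$, and this is precisely where the two alternative hypotheses intervene: if $\alpha(0)>0$, then $\alpha$ extends continuously and positively to $[0,\infty)$ and is therefore bounded below by a positive constant there; if instead $0$ is isolated, then $d(0,x)\ge r_0:=d(0,\MM\setminus\{0\})>0$, so the relevant arguments lie in $[r_0,\infty)$, on which $\alpha$ is again bounded below away from zero.

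I expect the positivity-of-the-infimum step in \ref{lem:10:3} to be the main (though still modest) obstacle, as it is the only place where continuity of $\alpha$, the growth at infinity forced by $\Mop(\alpha)<\infty$, and the dichotomy between $\alpha(0)>0$ and $0$ being isolated must all be combined; the cleanest presentation is to observe that each hypothesis rules out exactly the regime $t\to 0^+$ in which $\alpha$ could otherwise tend to $0$. Everything else follows mechanically once the identity $\|\mu(x)\|_{\F(\MM)}=d(0,x)/\alpha(d(0,x))$ is recorded.
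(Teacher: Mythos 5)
Your proposal is correct and follows essentially the same route as the paper: both rest on the identity $\Vert\mu(x)\Vert_{\F(\MM)}=d(0,x)/\alpha(d(0,x))$ and then reduce everything to elementary estimates on $t\mapsto t/\alpha(t)$, with your case split in \ref{lem:10:2} being just an unpacked version of the paper's mediant-type inequality $\frac{u+d}{\Lip(\alpha)u+\alpha(d)}\ge\min\{\frac{1}{\Lip(\alpha)},\frac{d}{\alpha(d)}\}$. Your careful justification that $\inf_{x\neq 0}\alpha(d(0,x))>0$ in \ref{lem:10:3} merely spells out what the paper dismisses as ``easily holds.''
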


\begin{proof}
Note that $\Vert \mu(x)\Vert_{\F(\MM)}=d(0,x)/\alpha(d(0,x))$ so \ref{lem:10:3} easily holds.
To prove \ref{lem:10:2} we set $u=d(0,x)-d$, so that
\[
\Vert \mu(x)\Vert_{\F(\MM)}
= \frac{u+d}{\alpha(u+d)}
\ge \frac{u+d}{\Lip(\alpha) u+\alpha(d)}
\ge\min\left\{ \frac{1}{\Lip(\alpha)} ,\frac{d}{\alpha(d)} \right\}.
\]

Finally, under the assumptions in \ref{lem:10:4}, we have $\alpha(t)\le \Lip(\alpha) t$ for all $t>0$, and the result is a consequence of this inequality.
\end{proof}

\begin{Proposition}\label{lem:propertiesOfTransformation}
Let $\alpha\colon(0,\infty)\to(0,\infty)$ be a Lipschitz map with $\Mop(\alpha)<\infty$ and let $(\MM,d,0)$ be a pointed metric space. Set $\B=\B(\MM,\alpha)$. Let $\mu$ be as in \eqref{eq:notation:b}. Then:
\begin{enumerate}[label={(\roman*)}, leftmargin=*, widest=viii]
\item\label{PoT:1} $\mu$ is one-to-one.
\item\label{PoT:2} $\mu|_{\MM_{[r,R]}}$ is bi-Lipschitz for all choices of $0<r<R<\infty$, where
\[
\MM_{[r,R]}=\{x\in \MM \colon r\le d(0,x) \le R \}.
\]
\item\label{PoT:7} If $0$ is an isolated point of $\MM$ or $\alpha(0)>0$, $\mu$ is Lipschitz, and $\mu|_{\MM_{[0,R]}}$ is bi-Lipschitz for every $R<\infty$.
\item\label{PoT:3} $\mu|_{\MM\setminus\{0\}}$ is a topological homeomorphism onto $\B\setminus\{0\}$.
\item\label{PoT:12} If $0\in K\subset\MM$, $K$ is closed in $\MM$ if and only if $\mu(K)$ is closed in $\B$.
\item\label{PoT:10} If $0$ is an isolated point of $\MM$ or $\alpha(0)>0$, $\mu$ is a topological homeomorphism.
\item\label{PoT:4} Given a net $(x_i)_{i\in I}$ in $\MM\setminus\{0\}$ the following are equivalent:
\begin{enumerate}[label={(\alph*)}, leftmargin=*, widest=b]
\item\label{PoT:4:a} $\lim_i \mu(x_i)=0$ in norm.
\item\label{PoT:4:b} $\lim_i \mu(x_i)=0$ weakly.
\item\label{PoT:4:c} $\alpha(0)>0$ and $\lim_i x_i=0$.
\end{enumerate}
\item\label{PoT:6} $0$ is an isolated point of $\B$ if and only if either $\alpha(0)=0$ or $0$ is an isolated point of $\MM$.
\item\label{PoT:11} The inverse of $\mu$ is continuous.
\item\label{PoT:5} A sequence $(x_n)_{n=1}^\infty$ in $\MM\setminus\{0\}$ is Cauchy if and only if either it converges to $0$ or $(\mu(x_n))_{n=1}^\infty$ is Cauchy in $\B$.
\item\label{PoT:8} If $0$ is an isolated point of $\MM$ or $\alpha(0)>0$, $\MM$ is complete if and only if $\B$ is complete.
\item\label{PoT:9} The norm and weak topologies coincide on $\B \subset \F(\MM)$.
\end{enumerate}
\end{Proposition}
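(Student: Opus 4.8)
The plan is to distill from Lemma~\ref{lem:comp}, Lemma~\ref{lem:10}, and the two-sided estimate of Lemma~\ref{lem:compbis} a few elementary facts about $g(t):=t/\alpha(t)$ and about $\zeta$, and then read off the twelve assertions. I would first record that $g\le\Mop(\alpha)$ on $(0,\infty)$; that $\liminf_{t\to\infty}g(t)\ge 1/\Lip(\alpha)>0$, since $\alpha(t)\le\alpha(1)+\Lip(\alpha)(t-1)$ for $t\ge 1$; that $\alpha$ lies between positive constants $m_R\le\alpha\le M_R$ on each compact subinterval $[r,R]\subset(0,\infty)$ (being Lipschitz, hence continuous and positive there); that $\inf_{(0,\infty)}\alpha>0$ precisely when $\alpha(0)>0$; and that if $0$ is isolated, with $\rho:=d(0,\MM\setminus\{0\})>0$, then $\zeta(x)=\alpha(d(0,x))\ge d(0,x)/\Mop(\alpha)\ge\rho/\Mop(\alpha)>0$ for $x\ne 0$. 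The purely metric parts are then immediate. For \ref{PoT:1}, $x\ne y$ gives $d_\alpha(x,y)>0$, hence $\|\mu(x)-\mu(y)\|\ge d_\alpha(x,y)>0$ by Lemma~\ref{lem:compbis}, and $\mu(0)=0\ne\mu(x)$ for $x\ne 0$. For \ref{PoT:2}, on $\MM_{[r,R]}$ the bounds $m_R\le\zeta\le M_R$ give $d_\alpha(x,y)\approx d(x,y)$, which Lemma~\ref{lem:compbis} upgrades to $\|\mu(x)-\mu(y)\|\approx d(x,y)$. For \ref{PoT:7}, under either hypothesis $\zeta\ge c>0$ on $\MM\setminus\{0\}$, so the upper estimate of Lemma~\ref{lem:compbis} (with $\|\mu(x)\|=d(0,x)/\zeta(x)\le d(0,x)/c$) makes $\mu$ Lipschitz, while on $\MM_{[0,R]}$ the bound $\zeta\le M_R$ supplies the matching lower estimate; in particular $\mu|_{\MM\setminus\{0\}}$ is continuous, being locally bi-Lipschitz.

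The crux, and the step I expect to be the main obstacle, is continuity of the inverse, which I would obtain through the weak topology rather than the norm, because $g$ need not be monotone and so $\|\mu(x)\|=g(d(0,x))$ cannot be inverted naively. I claim that for a net $(x_i)$ in $\MM$ and $x\in\MM\setminus\{0\}$, weak convergence $\mu(x_i)\to\mu(x)$ in $\F(\MM)$ forces $x_i\to x$. The device is the localized test function $u(z)=\max\{0,\rho-d(z,x)\}$ with $0<\rho<d(0,x)$: it belongs to $\Lip_0(\MM)$ with $\Lip(u)\le 1$, and $\langle\mu(x),u\rangle=\rho/\alpha(d(0,x))>0$. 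Weak convergence gives $u(x_i)/\zeta(x_i)\to\rho/\alpha(d(0,x))>0$, so $u(x_i)>0$, i.e.\ $d(x_i,x)<\rho$, eventually; letting $\rho\downarrow 0$ yields $x_i\to x$. This single computation proves continuity of $\mu^{-1}$ on $\B\setminus\{0\}$, hence \ref{PoT:3}, and also settles the norm--weak coincidence of \ref{PoT:9} away from $0$: once $x_i\to x$ with $d(0,x)>0$ the net eventually lies in some $\MM_{[r,R]}$, where $\mu$ is bi-Lipschitz, so weak convergence is promoted to norm convergence.

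The base point is governed by \ref{PoT:4}, which I would prove with the $1$-Lipschitz test function $f(z)=d(0,z)$: weak convergence $\mu(x_i)\to 0$ gives $g(d(0,x_i))\to 0$, forcing $d(0,x_i)\to 0$ (as $g$ is bounded below by a positive constant off any neighbourhood of $0$) and then $\alpha(0)>0$ (otherwise $g\ge 1/\Lip(\alpha)$ by Lemma~\ref{lem:10}~\ref{lem:10:4}); the converse is a direct limit, and the implication from (a) to (b) is trivial. Combining \ref{PoT:4} with the previous paragraph completes \ref{PoT:9}. From \ref{PoT:4} and Lemma~\ref{lem:10}~\ref{lem:10:4} one reads off \ref{PoT:6}, since $0$ is isolated in $\B$ iff $\inf_{x\ne 0}\|\mu(x)\|>0$, which holds when $\alpha(0)=0$ or $0$ is isolated and fails otherwise. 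Continuity of $\mu^{-1}$ at $0$ follows from \ref{PoT:4} and \ref{PoT:6}, completing \ref{PoT:11}; then \ref{PoT:10} is \ref{PoT:7} (continuity of $\mu$) together with \ref{PoT:11}. For \ref{PoT:12}, continuity of $\mu^{-1}$ gives that $K$ closed implies $\mu(K)$ closed; conversely, if $\mu(K)$ is closed and $0\in K$, a net in $K$ converging to some $x$ has either $x=0\in K$, or $x\ne 0$, in which case continuity of $\mu$ at $x$ and closedness of $\mu(K)$ return $x\in K$.

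For the Cauchy statements, the forward direction of \ref{PoT:5} uses that $t_n:=d(0,x_n)$ is Cauchy in $\Rea$ (since $|t_n-t_m|\le d(x_n,x_m)$), so $t_n\to t^*$; if $t^*=0$ then $x_n\to 0$, and if $t^*>0$ the sequence is eventually in some $\MM_{[r,R]}$, where \ref{PoT:2} makes $(\mu(x_n))$ Cauchy. The backward direction is the delicate one: if $(\mu(x_n))$ is Cauchy then $t_n$ must be bounded, for otherwise one finds arbitrarily large $n,m$ with $t_m$ far exceeding $t_n$ and $d_\alpha(x_n,x_m)\ge(t_m-t_n)/\alpha(t_m)$ bounded below by a positive multiple of $1/\Lip(\alpha)$, contradicting Lemma~\ref{lem:compbis}; and then $\zeta\le M_R$ turns the lower estimate into $d(x_n,x_m)\le M_R\|\mu(x_n)-\mu(x_m)\|$, so $(x_n)$ is Cauchy. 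Part \ref{PoT:8} then follows by transporting Cauchy sequences across $\mu$ and $\mu^{-1}$ via \ref{PoT:5}, the completeness of $\F(\MM)$, and the continuity of $\mu$ (\ref{PoT:7}) and of $\mu^{-1}$ (\ref{PoT:11}). The recurring obstacle throughout is the non-monotonicity of $g$, which blocks any direct inversion of $\|\mu(x)\|$; the localized bump function and the estimate $\liminf_{t\to\infty}g(t)>0$ are the two devices that circumvent it.
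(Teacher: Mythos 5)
Your proof is correct, and on most items it runs parallel to the paper's: \ref{PoT:1}, \ref{PoT:2} and \ref{PoT:7} from Lemma~\ref{lem:compbis}, item \ref{PoT:4} from the test function $d(0,\cdot)$ together with Lemma~\ref{lem:10}, and the hard direction of \ref{PoT:5} by essentially the same unboundedness estimate $d_\alpha(x_n,x_m)\ge (t_n-t_m)/\alpha(t_n)$ (you phrase it as ``$(d(0,x_n))_n$ must be bounded'', the paper as a contradiction after passing to a subsequence with $\alpha(d(0,x_n))\nearrow\infty$). Where you genuinely diverge is in \ref{PoT:3}, \ref{PoT:11} and \ref{PoT:9}. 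The paper obtains these from Lemma~\ref{lem:37}~\ref{lem:37:2}, i.e.\ from the pair of test functions $f$ and $g=g_1g_2$ constructed there, which give the uniform lower bound $\max\{|(\mu(x)-\mu(y))(f)|,|(\mu(x)-\mu(y))(g)|\}\ge d_\alpha(x,y)$ and hence the coincidence of the weak and norm topologies on $\B\setminus\{0\}$ in one stroke. You instead use the localized bump $u(z)=\max\{0,\rho-d(z,x)\}$ to show that weak convergence $\mu(x_i)\to\mu(x)$ with $x\ne 0$ already forces $x_i\to x$ in $\MM$, and then upgrade to norm convergence via the bi-Lipschitz estimate on annuli from \ref{PoT:2}. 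Your route is more elementary, since it bypasses Lemma~\ref{lem:37}~\ref{lem:37:2} entirely, and it also makes explicit a point that the paper's terse ``\ref{PoT:3} follows easily from \ref{PoT:2}'' leaves to the reader, namely why a net with $\mu(x_i)\to\mu(x)$ must eventually stay in an annulus where the bi-Lipschitz estimate applies. What the paper's device buys in exchange is a quantitative, finitely-many-functionals control of $d_\alpha$ near each point of $\B\setminus\{0\}$, whereas your argument is purely a net-convergence one; for the purposes of this proposition both suffice, and your dependency order (\ref{PoT:3} and \ref{PoT:4} before \ref{PoT:9}, \ref{PoT:6}, \ref{PoT:11}, and these before \ref{PoT:10}, \ref{PoT:12}, \ref{PoT:8}) is free of circularity.
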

\begin{proof}
Let $d_\alpha$ be as in \eqref{eq:notation:c}.
\ref{PoT:1} is clear, because $\{\delta_\MM(x)\colon x\not=0\}$ is a linearly independent set. \ref{PoT:2} and \ref{PoT:7} follow from Lemma~\ref{lem:compbis}. In turn, \ref{PoT:3} follows easily from \ref{PoT:2}, \ref{PoT:12} is a consequence of \ref{PoT:3}, and \ref{PoT:10} follows from \ref{PoT:7}.

The equivalence between \ref{PoT:4:a} and \ref{PoT:4:b} in \ref{PoT:4} is a consequence of Lemma~\ref{lem:comp}~\ref{lem:comp:1}. Let $(x_i)_{i\in I}$ be a net in $\MM\setminus\{0\}$. We infer from Lemma~\ref{lem:10}~\ref{lem:10:3} that $\lim_i \mu(x_i)=0$ whenever $\lim_i x_i=0$ and $\alpha(0)>0$, from Lemma~\ref{lem:10}~\ref{lem:10:2} that $(\mu(x_i))_{n=1}^\infty$ does not converge to $0$ whenever $(x_i)_{i\in I}$ does not converge to $0$, and from Lemma~\ref{lem:10}~\ref{lem:10:4} that $(\mu(x_i))_{i\in I}$ does not converge to $0$ whenever $\alpha(0)=0$. Thus, \ref{PoT:4:a} and \ref{PoT:4:c} in \ref{PoT:4} are equivalent.

\ref{PoT:6} is a consequence \ref{PoT:4}, and \ref{PoT:11} follows from \ref{PoT:3} and \ref{PoT:4}. We now prove \ref{PoT:5}. Let $(x_n)_{n=1}^\infty$ be a sequence in $\MM\setminus\{0\}$. Suppose that it is Cauchy and does not converge to $0$. Then,
$
\inf_n d(0,x_n)>0.
$
Applying Lemma~\ref{lem:compbis} gives that $(\mu(x_n))_{n=1}^\infty$ is a Cauchy sequence. Suppose that $(\mu(x_n))_{n=1}^\infty$ is a Cauchy sequence and assume by contradiction that $(x_n)_{n=1}^\infty$ is not Cauchy. Then, using Lemma~\ref{lem:compbis},
\[\lim_{k\to\infty} \sup_{m,n\geq k}d_\alpha(x_n,x_m) = 0 < \lim_{k\to\infty} \sup_{m,n\geq k} d(x_n,x_m),\] which easily implies $\sup_n \alpha(d(0,x_n))=\infty$. Passing to a subsequence we can assume that $(\alpha(d(0,x_n)))_{n=1}^\infty$ increases to $\infty$. Then, if $m\le n$,
\[
d_\alpha(x_n,x_m) =
\frac{d(x_n,x_m)} { \alpha(d(0,x_n)) } \ge \frac{ d(0,x_n)-d(0,x_m)}{\alpha(d(0,x_n))}.
\]
Thus, by Lemma~\ref{lem:equivalpha}, for every $m\in\Nat$ we have $\liminf_n d_\alpha(x_n,x_m) > 0$ and so applying Lemma~\ref{lem:compbis} we obtain
\[
\liminf_n \Vert \mu(x_n)-\mu(x_m)\Vert>0, \quad m\in\Nat.
\]
This absurdity concludes the proof of \ref{PoT:5}.

\ref{PoT:8} follows from \ref{PoT:5} and \ref{PoT:10}. By \ref{PoT:4}, in order to prove \ref{PoT:9} it suffices to show that the norm and weak topologies coincide on $\B\setminus\{0\}$. To that end, taking into accout Lemma~\ref{lem:compbis}, it suffices to show that for every $y\in\MM\setminus\{0\}$ and every $\varepsilon>0$ there is a finite family $(f_i)_{i=1}^n$ in $\Lip_0(\MM)$ and $\delta>0$ such that $d_\alpha(x,y)\le\varepsilon$ whenever $x\in\MM\setminus\{0\}$ satisfies $|(\mu(x)-\mu(y))(f_i)|\le \delta$ for all $1\le i \le n$. But this fact is immediate from Lemma~\ref{lem:37}~\ref{lem:37:2}.
\end{proof}

Roughly speaking, the following corollary of Proposition~\ref{lem:propertiesOfTransformation}~\ref{PoT:7} shows that applying our construction to an already  bounded metric space barely alters the original space.
\begin{Corollary}
Let $\alpha$ be a Lipschitz map with $\Mop(\alpha)<\infty$ and let $(\MM,d,0)$ be a pointed metric space. Suppose that $\MM$ is bounded and that either $0$ is an isolated point of $\MM$ or $\alpha(0)>0$.
Then $\MM\simeq_{\Lip}\B(\MM,\alpha)$.
\end{Corollary}

It is obvious that if two metric spaces $\MM$ and $\NN$ are bi-Lipschitz equivalent, then the corresponding Lipschitz free $p$-spaces $\F_p(\MM)$ and $\F_p(\NN)$ are linearly isomorphic for all $0<p\leq 1$. It is well-known that the converse does not hold, i.e., a linear isomorphism between $\F_p(\MM)$ and $\F_p(\NN)$ does not imply the bi-Lipschitz equivalence between $\MM$ and $\NN$.
 Theorem~\ref{thm:boundedEquivToUnbounded} provides examples of this situation.
With the tools obtained so far, we next show how the bi-Lipschitz equivalence between $\B(\MM,\alpha)$ and $\B(\NN,\alpha)$ stands in this comparison.

\begin{Theorem}
Let $\MM$ and $\NN$ be pointed metric spaces, let $\alpha\colon(0,\infty)\rightarrow (0,\infty)$ be a Lipschitz function with $D(\alpha)<\infty$, and let $0<p\leq 1$. Then the following implications hold: 
\[
\MM\simeq_{\Lip}\NN \Rightarrow \B(\MM,\alpha)\simeq_{\Lip}\B(\NN,\alpha)\Rightarrow \F_p(\MM)\simeq \F_p(\NN).
\]
Moreover, none of the converse implications is true even if the metric spaces $\MM$ and $\NN$ are complete.
\end{Theorem}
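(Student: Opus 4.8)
My plan splits into the two formal forward implications and the two converse failures, the latter requiring explicit complete counterexamples. For the first implication I take a bi-Lipschitz bijection $f\colon\MM\to\NN$, which I may assume to be zero-preserving (working in the pointed category), and feed it to Proposition~\ref{prop:functorial}: this produces a bi-Lipschitz zero-preserving map $\B(f)\colon\B(\MM,\alpha)\to\B(\NN,\alpha)$, and the functorial identities $\B(f)\circ\B(f^{-1})=\B(\id)=\id$ (and symmetrically) force $\B(f)$ to be a bijection, so $\B(\MM,\alpha)\simeq_{\Lip}\B(\NN,\alpha)$. For the second implication I would simply linearise: a bi-Lipschitz bijection $\B(\MM,\alpha)\to\B(\NN,\alpha)$ induces $\F_p(\B(\MM,\alpha))\simeq\F_p(\B(\NN,\alpha))$, and Theorem~\ref{thm:boundedEquivToUnbounded} lets me replace each side by $\F_p(\MM)$ and $\F_p(\NN)$ respectively.

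To break the first converse I would exploit that $\B$ sends an unbounded space to a bounded one but is idempotent, up to bi-Lipschitz equivalence, on bounded spaces with isolated base point. Concretely I set $\MM=\Nat\cup\{0\}$ (complete, unbounded, $0$ isolated) and $\NN:=\B(\MM,\alpha^{(0)})$; by Proposition~\ref{lem:propertiesOfTransformation} the space $\NN$ is bounded, complete (item~\ref{PoT:8}) and has $0$ isolated (item~\ref{PoT:6}, since $\alpha^{(0)}(0)=0$). As $0$ is isolated in both spaces, Proposition~\ref{prop:BL} gives $\B(\MM,\alpha)\simeq_{\Lip}\B(\MM,\alpha^{(0)})=\NN$, while the Corollary following Proposition~\ref{lem:propertiesOfTransformation} gives $\B(\NN,\alpha)\simeq_{\Lip}\NN$; hence $\B(\MM,\alpha)\simeq_{\Lip}\B(\NN,\alpha)$. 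On the other hand $\MM$ is unbounded and $\NN$ bounded, so $\MM\not\simeq_{\Lip}\NN$. This argument is insensitive to $\alpha$ and to $p$.

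For the second converse the plan is to pair complete spaces with isomorphic free $p$-spaces whose $\B$-images are separated by a bi-Lipschitz invariant. For $p=1$, and for every admissible $\alpha$, I would take $\MM=\Nat\cup\{0\}$ and let $\NN$ be an infinite set with the $\{0,1\}$-valued metric, so that $\F(\MM)\simeq\ell_1\simeq\F(\NN)$; here $\B(\NN,\alpha)\simeq_{\Lip}\NN$ is uniformly discrete, whereas $\B(\MM,\alpha)$ is not, because $\Mop(\alpha)<\infty$ forces $\alpha(t)\approx t$ at infinity and hence $d_\alpha(n,n+1)\approx 1/n\to 0$; since uniform discreteness is a bi-Lipschitz invariant, $\B(\MM,\alpha)\not\simeq_{\Lip}\B(\NN,\alpha)$. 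When $\alpha(0)>0$ I would instead get a counterexample valid for all $p$: with $\NN=[0,1]$ (non-isolated base point) and $\MM=[0,1]\cup\{*\}$ ($*$ an isolated base point), the isomorphism $\F_p(\MM)\simeq\F_p(\NN)$ comes from \cite{AACD2020}*{Lemma 2.8}, while item~\ref{PoT:6} shows the base point is isolated in $\B(\MM,\alpha)$ but not in $\B(\NN,\alpha)$.

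The main obstacle, which the case split already signals, is a single counterexample to the second converse valid simultaneously for all $p\in(0,1]$ and all admissible $\alpha$. The uniform-discreteness example is robust in $\alpha$ but collapses for $p<1$: since $\sum_{n=1}^N(\delta(n)-\delta(n-1))=\delta(N)$ has $\F_p$-norm equal to $N\ll N^{1/p}$, the free $p$-space of $\Nat\cup\{0\}$ is no longer isomorphic to $\ell_p$, so it and the free $p$-space of the uniformly discrete space part ways and the hypothesis is lost. The isolated-point example cures the $p$-dependence but requires $\alpha(0)>0$. I therefore expect the real difficulty to be producing, for $\alpha(0)=0$ and $p<1$, complete metric spaces with isomorphic free $p$-spaces yet genuinely different local geometry; the most promising input seems to be a $p$-analogue of Kaufmann's comparison of a $p$-Banach space with its unit ball, whose $\B$-images should then be distinguished by the annulus-level bi-Lipschitz behaviour recorded in Proposition~\ref{lem:propertiesOfTransformation}~\ref{PoT:2}.
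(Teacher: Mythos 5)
Your forward implications follow the paper's own route (Proposition~\ref{prop:functorial} plus Theorem~\ref{thm:boundedEquivToUnbounded}), and your counterexample to the first converse is correct but genuinely different from the paper's: you exploit that $\B$ is, up to bi-Lipschitz equivalence, idempotent on bounded spaces with isolated base point, taking $\NN=\B(\MM,\alpha^{(0)})$ for $\MM=\Nat\cup\{0\}$ and invoking Proposition~\ref{prop:BL} together with the Corollary following Proposition~\ref{lem:propertiesOfTransformation}. The paper instead exhibits two concrete spaces, a countable bounded uniformly separated space and $\Nat\cup\{0\}$ with the metric $d(m,n)=|2^m-2^n|$, and uses Lemma~\ref{lem:compbis} to check that both $\B$-images are countable, bounded and uniformly separated, hence bi-Lipschitz equivalent, while the original spaces are not. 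Both arguments are valid; yours recycles the structural results of Section~\ref{Sec:3}, the paper's is a direct metric computation.

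The genuine gap is in the second converse, and you flag it yourself: the case $p<1$ with $\alpha(0)=0$ is left unresolved, and your suggestion that one needs a $p$-analogue of Kaufmann's theorem points in the wrong direction---no new isomorphism input is required. The paper takes $K=[0,1]$ and $X=[0,\infty)$, for which $\F_p(X)\simeq\F_p(K)$ holds for \emph{all} $0<p\le 1$ by \cite{AACD2018}, attaches an isolated point to each space (which preserves compactness and non-compactness and, by \cite{AACD2020}*{Lemma 2.8}, the isomorphism class of the free $p$-space), and then applies Proposition~\ref{lem:propertiesOfTransformation}~\ref{PoT:10}: both $\B$-images are homeomorphic to the underlying spaces, one compact and one not, so they cannot be bi-Lipschitz equivalent---for every admissible $\alpha$ and every $p$ simultaneously. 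In fact your own pair $\NN=[0,1]$, $\MM=[0,1]\cup\{*\}$ already settles the missing case: $\B(\MM,\alpha)$ is compact, being the continuous image of the compact space $\MM$ under $\mu$ (Proposition~\ref{lem:propertiesOfTransformation}~\ref{PoT:7} applies because the base point $*$ is isolated), whereas for $\alpha(0)=0$ the space $\B(\NN,\alpha)$ is not compact: by Lemmas~\ref{lem:equivalpha} and~\ref{lem:compbis} the points $\mu(2^{-n})$, $n\in\Nat$, are uniformly separated, since $d_\alpha(2^{-n},2^{-m})\approx 1-2^{-|n-m|}\ge \tfrac{1}{2}$ for $n\neq m$. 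This compactness argument also repairs a small flaw in your $\alpha(0)>0$ case: a bi-Lipschitz equivalence between the $\B$-spaces need not send base point to base point, so ``the base point is isolated in one and not in the other'' is not by itself conclusive; argue instead that $\B(\NN,\alpha)$ is homeomorphic to $[0,1]$ by Proposition~\ref{lem:propertiesOfTransformation}~\ref{PoT:10} and hence has no isolated points at all, while $\B(\MM,\alpha)$ has one.
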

\begin{proof}
 The implications are a consequence of Proposition~\ref{prop:functorial} and Theorem~\ref{thm:boundedEquivToUnbounded}.
Let us
show that there exist metric spaces $\MM$ and $\NN$ with $\F(\MM)\simeq \F(\NN)$ but such that $\B(\MM,\alpha)$ and $\B(\NN,\alpha)$ are not bi-Lipschitz. Pick any compact metric space $K$ and a non-compact metric space $X$ such that $\F_p(X)\simeq \F_p(K)$ (e.g., by \cite{AACD2018}, it suffices to put $K=[0,1]$ and $X=[0,\infty)$). In particular, by attaching isolated points to both these spaces, there are a non-compact metric space $\MM$ with isolated $0$ and a compact metric space $\NN$ with isolated $0$ such that $\F_p(\MM)\simeq \F_p(\NN)$. By Proposition~\ref{lem:propertiesOfTransformation}~\ref{PoT:10}, $\B(\MM,\alpha)$ and $\MM$ are homeomorphic. The same is true for $\B(\NN,\alpha)$ and $\NN$. Therefore $\B(\MM,\alpha)$ and $\B(\NN,\alpha)$ cannot be bi-Lipschitz equivalent.

Finally, we find $\MM$ and $\NN$ such that $\MM$ and $\NN$ are not bi-Lipschitz equivalent, while $\B(\MM,\alpha)$ and $\B(\NN,\alpha)$ are. Let $\MM$ be a countable bounded uniformly separated metric space. By Proposition~\ref{lem:propertiesOfTransformation} \ref{PoT:2}, $\B(\MM,\alpha)\simeq_{\Lip} \MM$. Therefore $\B(\MM,\alpha)$ also is countable, bounded and uniformly separated.
 Consider $\NN=\Nat\cup\{0\}$ endowed with the metric
\[
d(m,n)=|2^m-2^n|, \quad m,n\in \Nat\cup\{0\}.
\]
For $ m$, $n\in \Nat\cup\{0\}$ with $m\not=n$ we have
\[
d(m,n)\approx 2^{\max\{m,n\}} \approx \max\{ \alpha(d(n,0)) , \alpha(d(m,0))\}.
\]
Thus, an application of Lemma~\ref{lem:compbis} shows that $\B(\NN,\alpha)$ is countable, bounded and uniformly separated. Therefore, $\B(\MM,\alpha)$ and $\B(\NN,\alpha)$ are bi-Lipschitz equivalent. On the other hand, since $\MM$ is bounded and $\NN$ is not, these spaces are not bi-Lipschitz equivalent.
\end{proof}

In Section~\ref{Sec:4}, we shall see how the Lipschitz equivalence between $\B(\MM,\alpha)$ and $\B(\NN,\alpha)$ is tightly related to the algebraic isomorphism between the algebras $\Lip_0(\MM)$ and $\Lip_0(\NN)$ (see Theorem~\ref{thm:surprisignlyStrongResult}).


\section{$\Lip_0(\MM)$ as a Banach algebra for $\MM$ unbounded metric space}\label{subsec:Multiplication}
\noindent
In this section we examine the (real or complex) Banach space $\Lip_0(\MM)$ as an algebra over an arbitrary metric space $\MM$. Recall that we have $\F(\MM)^*\equiv \Lip_0(\MM)$ (the fact that this is the case even for the complex field is explained in Subsection~\ref{subsec:pComplex}).
In the case when the pointed metric space $(\MM,d,0)$ is bounded, $\Lip_0(\MM)$ is a Banach algebra in the weak sense, that is, there is a constant $C>1$ such that
\[
\Lip(f g)\le C\Lip (f) \Lip(g), \quad f,g\in \Lip_0(\MM).
\]
In fact, we can choose $C=2\max_{x\in\MM} d(0,x)$. These are called \emph{Gelfand algebras} in \cite{WeaverBook2018}.
Note that the Banach algebra law requires that the above estimate holds with $C=1$. As it is discussed in \cite{WeaverBook2018}*{Chapter 7}, when $\MM$ is bounded it is possible to define a submultiplicative equivalent norm on $\Lip_0(\MM)$ so that $\Lip_0(\MM)$ becomes a Banach algebra. As Weaver explains, the drawbacks of changing the natural norm are the loss of the lattice structure of the unital ball of $\Lip_0(\MM)$ as well as the breakdown of several isometric results (see \cite{WeaverBook2018}*{Chapter 7} for a discussion on this topic).

Here we keep the standard Lipschitz norm on $\Lip_0(\MM)$. However, we redefine the product by a simple natural formula so that the resulting algebra is a Banach algebra for any (even unbounded) metric space $\MM$. In doing so, some of the results of isometric nature are preserved.

In the case when $\MM$ is unbounbed we shall use Theorem~\ref{thm:boundedEquivToUnbounded} as a vehicle to define a multiplication on $\Lip_0(\MM)$. Given a Lipschitz map $\alpha\colon(0,\infty)\to(0,\infty)$ with $\Mop(\alpha)<\infty$, let $Q_\alpha^F\colon\F(\B(\MM))\rightarrow \F(\MM)$ and $P_\alpha^F\colon \F(\MM)\rightarrow \F(\B(\MM))$ be as in Theorem~\ref{thm:boundedEquivToUnbounded}. $Q_\alpha^F$ and $P_\alpha^F$ are linear isomorphisms inverse of each other. Let $Q_\alpha^L\colon\Lip_0(\MM) \to \Lip_0(\B(\MM))$ and $P_\alpha^L\colon\Lip_0(\B(\MM)) \to \Lip_0(\MM)$ be their respectivel dual maps via the canonical isometries between Lipschitz spaces and the duals of Lipschitz-free spaces. Since pointwise multiplication is a well-defined operation on $\Lip_0(\B(\MM))$, the operation
\[
P_\alpha^L( Q_\alpha^L(f) Q_\alpha^L(g)), \quad f,g\in \Lip_0(\MM).
\]
is well-defined on $\Lip_0(\MM)$. Nevertheless, let us define a multiplication by a more transparent formula and show that it is equivalent to what is displayed above.

\begin{Definition} For $f,g\in\Lip_0(\MM)$ and $x\in\MM$ we define a new operation $\odot_\alpha$ on $\Lip_0(\MM)$
as
\[
f\odot_\alpha g(x):= \frac{f(x)g(x)}{\zeta(x)},
\]
where $\zeta$ is as in \eqref{eq:notation:a}.
\end{Definition}

A routine computation yields
\begin{equation}\label{eq:T*}
P_\alpha^L(h)(x)=\zeta(x) h(\mu(x)), \quad h\in \Lip_0(\B(\MM)),
\end{equation}
and
\begin{equation}\label{eq:S*}
Q_\alpha^L(f)(\mu(x))=\frac{f(x)}{\zeta(x)}, \quad f\in \Lip_0(\MM).
\end{equation}
As the alert reader might have noticed, expression \eqref{eq:S*} connects the operator $Q_\alpha^L$  with the work of Weaver  (see \cite{WeaverBook2018}*{Theorem~{2.20}}).

Thus, for $x\in\MM$, 
\begin{equation}
f\odot_\alpha g(x)=P_\alpha^L( Q_\alpha^L(f) Q_\alpha^L(g))(x),
\end{equation}
which immediately gives that $f\odot_\alpha g\in\Lip_0(\MM)$. Moreover, using that for Lipschitz functions on a bounded metric space $\NN$ we have the estimate
\[
\Lip(f'g')\leq 2\mathrm{diam}(\NN)\Lip(f')\Lip(g'),\]
combined with the fact that by Theorem~\ref{thm:boundedEquivToUnbounded},
\[
\|Q^L_\alpha\|\|P^L_\alpha\|\leq 1+2K(\alpha),
\] we get
\[
\Lip(f\odot_\alpha g)\leq 2\Mop(\alpha)(1+2\KK(\alpha))\Lip(f)\Lip(g).
\]

The following result is aimed at improving the constant $2\Mop(\alpha)(1+2\KK(\alpha))$ in the last inequality.

\begin{Lemma}\label{lem:lipConstantMultiplication}
Let $(\MM,d,0)$ be an unbounded pointed metric space and let $\alpha\colon(0,\infty)\to(0,\infty)$ be a Lipschitz map with $\Mop(\alpha)<\infty$. Then
\[
\Lip( f\odot_\alpha g) \leq \Mop(\alpha)(\KK(\alpha)+2) \Lip(f) \Lip(g)
\]
for all $f,g\in\Lip_0(\MM)$.
\end{Lemma}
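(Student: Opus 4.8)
The plan is to bound the difference quotient $|h(x)-h(y)|/d(x,y)$ directly, where $h := f\odot_\alpha g$, rather than route through the operators $P_\alpha^L$, $Q_\alpha^L$; a direct estimate is what produces the sharp constant $\Mop(\alpha)(\KK(\alpha)+2)$. The two facts I would lean on throughout are: first, since $f(0)=g(0)=0$, one has $|f(x)|\le \Lip(f)\,d(0,x)\le \Lip(f)\Mop(\alpha)\,\zeta(x)$ (and similarly for $g$), where the second inequality is just the definition $\Mop(\alpha)=\sup_{t>0}t/\alpha(t)$; and second, $|\zeta(x)-\zeta(y)|\le \Lip(\alpha)\,d(x,y)$ for $x,y\ne 0$, which is Lemma~\ref{lem:comp}~\ref{lem:comp:2}.

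First I would dispose of the degenerate case $y=0$ (equivalently $x=0$): here $h(0)=0$ and $|h(x)|=|f(x)||g(x)|/\zeta(x)\le \Lip(f)\Lip(g)\,d(0,x)^2/\zeta(x)\le \Mop(\alpha)\Lip(f)\Lip(g)\,d(0,x)$, which is even below the asserted bound since $\KK(\alpha)\ge 0$. For the main case $x,y\ne 0$, by the symmetry of $|h(x)-h(y)|$ I would assume without loss of generality that $\zeta(x)\ge\zeta(y)$ and split
\[
h(x)-h(y)=\frac{f(x)g(x)-f(y)g(y)}{\zeta(x)}+f(y)g(y)\Bigl(\frac{1}{\zeta(x)}-\frac{1}{\zeta(y)}\Bigr).
\]

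For the first summand I would write $f(x)g(x)-f(y)g(y)=f(x)(g(x)-g(y))+g(y)(f(x)-f(y))$ and use $|f(x)|\le \Lip(f)\Mop(\alpha)\zeta(x)$ together with $|g(y)|\le \Lip(g)\Mop(\alpha)\zeta(y)\le \Lip(g)\Mop(\alpha)\zeta(x)$; after dividing by $\zeta(x)$ the factors $\zeta(x)$ cancel and this term contributes at most $2\Mop(\alpha)\Lip(f)\Lip(g)\,d(x,y)$. For the second summand I would bound $|f(y)g(y)|\le \Lip(f)\Lip(g)\,d(0,y)^2$ and $|\zeta(x)-\zeta(y)|\le \Lip(\alpha)d(x,y)$, and then observe that $d(0,y)^2/(\zeta(x)\zeta(y))\le (d(0,y)/\zeta(y))(d(0,y)/\zeta(x))\le \Mop(\alpha)^2$, again using $\zeta(x)\ge\zeta(y)$; since $\Lip(\alpha)\Mop(\alpha)^2=\Mop(\alpha)\KK(\alpha)$, this term contributes at most $\Mop(\alpha)\KK(\alpha)\Lip(f)\Lip(g)\,d(x,y)$. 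Adding the two bounds yields exactly the claimed constant.

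There is no serious obstacle here, as the statement is a quantitative product-rule estimate; the one point requiring care is the choice of how to split $h(x)-h(y)$ and where to spend the hypothesis $\zeta(x)\ge\zeta(y)$. It is precisely this ordering that lets one absorb the denominators $\zeta(x)$ and $\zeta(y)$ into powers of $\Mop(\alpha)$ and keep the leading constant at $\KK(\alpha)+2$ rather than a larger value. A more symmetric grouping of the two terms would still give a Banach-algebra inequality but with a worse constant, so to land exactly on $\Mop(\alpha)(\KK(\alpha)+2)$ the asymmetric split above is what I would use.
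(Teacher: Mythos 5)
Your proof is correct and takes essentially the same approach as the paper's: a direct product-rule estimate of the difference quotient built on the two facts $|f(x)|\le \Lip(f)\,d(0,x)\le \Lip(f)\,\Mop(\alpha)\,\zeta(x)$ and $|\zeta(x)-\zeta(y)|\le \Lop(\alpha)\,d(x,y)$, with the same separate treatment of the case $y=0$. The only divergence is cosmetic — the paper splits the difference into the three terms $(f(x)-f(y))g(x)/\zeta(x)$, $(g(x)-g(y))f(y)/\zeta(y)$, and $f(y)g(x)(\zeta(y)-\zeta(x))/(\zeta(x)\zeta(y))$, which yields $2\Mop(\alpha)+\Mop^2(\alpha)\Lop(\alpha)=\Mop(\alpha)(\KK(\alpha)+2)$ with no normalization $\zeta(x)\ge\zeta(y)$ at all, so your closing claim that the asymmetric two-step split and the ordering are what make the sharp constant attainable is contradicted by the paper's own argument.
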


\begin{proof}
Let $x$, $y\in\MM$. If $0\notin\{x,y\}$, using Lemma~\ref{lem:comp}, we estimate $E:=|f\otimes_\alpha g(x)- f\otimes_\alpha g(y)|$ by
\begin{align*}
& \left|\frac{(f(x)-f(y))g(x)}{\zeta(x)}\right| + \left|\frac{(g(x)-g(y))f(y)}{\zeta(y)}\right|+ \left|\frac{f(y)g(x)(\zeta(y) - \zeta(x))}{\zeta(x)\zeta(y)}\right|\\
&\leq \Lip(f) \Lip(g) d(x,y)\left( \frac{d(0,x)}{\zeta(x)} + \frac{d(0,y)}{\zeta(y)}+ \Lip(\alpha )\frac{d(0,x)d(0,y)}{\zeta(x) \zeta(y)} \right)\\
&\leq (2\Mop(\alpha) + \Mop^2(\alpha) \Lip(\alpha) ) \Lip(f) \Lip(g) d(x,y).
\end{align*}

If $y=0$ and $x\not=0$,
\[
E \leq \frac{ \Lip(f) \Lip(g) d(0,x)d(0,x)}{\zeta(x)}
\leq \Mop(\alpha) \Lip(f) \Lip(g) d(y,x).\qedhere
\]
\end{proof}

Note that the constant obtained in Lemma~\ref{lem:lipConstantMultiplication} is optimal. Indeed, for $\MM = \Rea^+$, $\alpha(t)=3t$, $f(t) = 1-|t-1|$ and $g(t)=-f(t)$, we have
\begin{align*}
\Lip(f\odot_\alpha g) & \geq \lim_{\varepsilon\to 0^+}\frac{|(f\odot_\alpha g)(1) - (f\odot_\alpha g)(1+\varepsilon)|}{\varepsilon}\\ & = (f\odot_\alpha g)'_+(1)\\
& = 1\\ & = \Mop(\alpha)(\KK(\alpha)+2) \Lip(f) \Lip(g).
\end{align*}

Our next result is a ready consequence of our construction. Recall that given a metric space $\MM$ we have a natural ordering on $\Lip_0(\MM)$, namely, $f\geq 0$ if $f(x)\geq 0$ for every $x\in\MM$. Then $\Lip_0(\MM)$ is an ordered Banach algebra, i.e., if we put
\[
\Lip_0^+(\MM)=\{ f\in \Lip_0(\MM) \colon f\ge 0\},\] the sets
\[
\Rea^+\cdot \Lip_0^+(\MM), \quad \Lip_0^+(\MM)+\Lip_0^+(\MM), \quad \Lip_0^+(\MM) \cdot \Lip_0^+(\MM)
\]
are contained in the positive cone $\Lip_0^+(\MM)$.

Let $\NN$ be another metric space. A map $T\colon\Lip_0(\MM)\to \Lip_0(\NN)$ is said to be \emph{normal} if whenever $(f_i)_{i\in I}$ is a norm-bounded net in $\Lip_0(\MM)$ which increases pointwise towards $f\in \Lip_0(\MM)$, then the net $(T(f_i))_{i\in I}$ increases pointwise towards $T(f)$. Any normal operator is positive.

\begin{Proposition}\label{cor:canonicalMorphism}(cf.\  \cite{WeaverBook2018}*{Theorem 6.23}). 
Let $(\MM,d)$ be an unbounded metric space, and let $\alpha\colon (0,\infty)\to(0,\infty)$ be a Lipschitz function with $\Mop(\alpha) < \infty$. Then, the map $Q_\alpha^L\colon\Lip_0(\MM)\to \Lip_0(\B)$ defined as in \eqref{eq:S*} is a $w^*$-$w^*$ continuous normal isomorphism with $Q_\alpha^L(f\odot_\alpha g) = Q_\alpha^L(f)Q_\alpha^L(g)$ for all $f$ and $g\in\Lip_0(\MM)$. Its inverse is the operator $P_\alpha^L$ defined in \eqref{eq:T*}.
\end{Proposition}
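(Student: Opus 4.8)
The plan is to verify the claimed properties of $Q_\alpha^L$ by exploiting the duality already set up in Theorem~\ref{thm:boundedEquivToUnbounded}. Since $Q_\alpha^L$ and $P_\alpha^L$ are by definition the adjoints of $P_\alpha^F$ and $Q_\alpha^F$ respectively, and since the latter are inverse isomorphisms of each other by that theorem, it follows immediately that $Q_\alpha^L$ and $P_\alpha^L$ are bounded linear isomorphisms, inverse of each other. Because they are adjoints of bounded operators between Lipschitz-free spaces, they are automatically $w^*$-$w^*$ continuous; this is a standard consequence of being a dual map and requires no real work.

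Next I would verify the multiplicative identity $Q_\alpha^L(f\odot_\alpha g) = Q_\alpha^L(f)Q_\alpha^L(g)$. This is a direct pointwise computation using formula~\eqref{eq:S*}: evaluating the right-hand side at $\mu(x)$ gives $\frac{f(x)}{\zeta(x)}\cdot\frac{g(x)}{\zeta(x)}$, while the left-hand side evaluated at $\mu(x)$ is $\frac{(f\odot_\alpha g)(x)}{\zeta(x)} = \frac{f(x)g(x)}{\zeta(x)^2}$, using the definition of $\odot_\alpha$; these agree. Since $\B\setminus\{0\}=\mu(\MM\setminus\{0\})$ by the remark following Definition~\ref{def:Bp}, checking equality at all points $\mu(x)$ suffices to establish equality of the two Lipschitz functions on $\B$. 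The statement that $P_\alpha^L$ is the inverse is already covered by the first paragraph.

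The remaining and most delicate point is \emph{normality}: that $Q_\alpha^L$ sends a norm-bounded net increasing pointwise to $f$ to a net increasing pointwise to $Q_\alpha^L(f)$. The key observation is that normality is essentially a pointwise-monotonicity statement, and formula~\eqref{eq:S*} shows $Q_\alpha^L$ acts by the pointwise transformation $h\mapsto h(x)/\zeta(x)$ pulled through the bijection $\mu$. First I would note that if $(f_i)$ is norm-bounded and increases pointwise to $f$ on $\MM$, then since $\zeta(x)>0$ for $x\neq 0$, the values $f_i(x)/\zeta(x)$ increase pointwise to $f(x)/\zeta(x)$ at every $\mu(x)\in\B\setminus\{0\}$, and the value at $0$ is fixed at $0$; this gives the pointwise monotone convergence of $Q_\alpha^L(f_i)$ to $Q_\alpha^L(f)$ on all of $\B$. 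The one genuine subtlety is confirming that $(Q_\alpha^L(f_i))$ remains norm-bounded in $\Lip_0(\B)$, which follows from boundedness of $Q_\alpha^L$ together with the assumed norm-boundedness of $(f_i)$, so that Lemma~\ref{lem:w*nets} is applicable and the increasing pointwise limit is indeed the $w^*$-limit $Q_\alpha^L(f)$. I expect the bookkeeping around the base point $0$ and the verification that pointwise convergence on $\B\setminus\{0\}$ extends correctly across $0$ to be the only place demanding care; everything else is formal duality or a short computation.
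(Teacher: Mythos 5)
Your proposal is correct and takes essentially the same route as the paper, which states this proposition without a separate proof as ``a ready consequence of our construction'': the inverse-isomorphism and $w^*$-$w^*$ continuity claims come from dualizing Theorem~\ref{thm:boundedEquivToUnbounded}, the multiplicative identity is the pointwise computation with \eqref{eq:S*} that the paper displays just before the proposition, and normality is exactly the pointwise monotonicity argument you give (where, by the paper's definition of normality, norm-boundedness of the net is an assumption you may use but the $w^*$-limit discussion via Lemma~\ref{lem:w*nets} is not actually needed). One minor slip in your first paragraph: since $P_\alpha^F\colon\F(\MM)\to\F(\B)$, its adjoint maps $\Lip_0(\B)$ to $\Lip_0(\MM)$, so the correct pairing is $Q_\alpha^L=(Q_\alpha^F)^*$ and $P_\alpha^L=(P_\alpha^F)^*$ rather than the swapped assignment you wrote; this does not affect the rest of your argument, which uses the correct formulas throughout.
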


Let us now investigate the existence of a unit for the Banach algebra $\Lip_0(\MM)$.
\begin{Lemma}\label{lem:unit}Let $(\MM,d)$ be an unbounded metric space, and let $\alpha$ be a Lipschitz function with $\Mop(\alpha) < \infty$. Consider $\Lip_0(\MM)$ endowed with the multiplication $\odot_\alpha$.
Let $\zeta$ be defined as in \eqref{eq:notation:a}. The following are equivalent:
\begin{enumerate}[label={(\roman*)}, leftmargin=*, widest=iii]
\item\label{lem:unit:1} $\Lip_0(\MM)$ has a unit, in which case such a unit is $\zeta$.
\item\label{lem:unit:2} $\zeta$ is a Lipschitz function.
\item\label{lem:unit:3} $\zeta$ is continuous at zero.
\item\label{lem:unit:4} Either $0$ is an isolated point or $\alpha(0)=0$.
\end{enumerate}
\end{Lemma}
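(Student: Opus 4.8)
The plan is to establish the cycle $\text{(i)}\Rightarrow\text{(ii)}\Rightarrow\text{(iii)}\Rightarrow\text{(iv)}\Rightarrow\text{(ii)}\Rightarrow\text{(i)}$, with $\zeta$ as the only candidate unit throughout. The key observation is that $\odot_\alpha$ is commutative (since pointwise multiplication is), so a one-sided unit is automatically two-sided, and that $e\in\Lip_0(\MM)$ is a unit exactly when $e(x)f(x)/\zeta(x)=f(x)$ for every $f\in\Lip_0(\MM)$ and every $x\in\MM$. Testing this identity against the $1$-Lipschitz function $z\mapsto d(z,0)$, which is nonzero at every $x\neq 0$, forces $e(x)=\zeta(x)$ for all $x\neq 0$; together with $e(0)=0=\zeta(0)$ this shows the only possible unit is $\zeta$, and that membership of this unit in $\Lip_0(\MM)$ is precisely the requirement that $\zeta$ be Lipschitz. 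This yields both $\text{(i)}\Rightarrow\text{(ii)}$ (a unit must coincide with $\zeta$, hence $\zeta$ is Lipschitz) and $\text{(ii)}\Rightarrow\text{(i)}$ (if $\zeta$ is Lipschitz then $\zeta\in\Lip_0(\MM)$, and the convention $0/0=0$ gives $\zeta\odot_\alpha f=f$ for all $f$, on $\MM\setminus\{0\}$ because $\zeta(x)>0$ there and at $0$ because both sides vanish), and pins down the unit as $\zeta$.

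The implication $\text{(ii)}\Rightarrow\text{(iii)}$ is immediate, as Lipschitz functions are continuous. For $\text{(iii)}\Rightarrow\text{(iv)}$ I assume $\zeta$ is continuous at $0$ while $0$ is \emph{not} isolated, and derive $\alpha(0)=0$: choosing a sequence $x_n\to 0$ with $x_n\neq 0$ gives $d(0,x_n)\to 0^+$, so by continuity $\alpha(d(0,x_n))=\zeta(x_n)\to\zeta(0)=0$; since $\alpha$ is Lipschitz the one-sided limit $\alpha(0)=\lim_{t\to 0^+}\alpha(t)$ exists, whence $\alpha(0)=0$, which is the second alternative in (iv). (If $0$ is isolated the first alternative holds trivially.)

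The substantive step is $\text{(iv)}\Rightarrow\text{(ii)}$, where I upgrade the estimate of Lemma~\ref{lem:comp}~\ref{lem:comp:2}, valid on $\MM\setminus\{0\}$, to a global Lipschitz bound on $\MM$. Since $\zeta(0)=0$, it suffices to produce a constant $L$ with $\alpha(d(0,x))\le L\,d(0,x)$ for all $x\neq 0$, so that the pairs $\{x,0\}$ obey a Lipschitz bound; combined with the bound $\Lip(\alpha)$ on pairs in $\MM\setminus\{0\}$, every pair is controlled and $\zeta$ is Lipschitz with constant $\max\{\Lip(\alpha),L\}$. If $\alpha(0)=0$, the Lipschitz extension of $\alpha$ to $[0,\infty)$ gives $\alpha(t)=|\alpha(t)-\alpha(0)|\le\Lip(\alpha)\,t$ for all $t>0$, so $L=\Lip(\alpha)$ works. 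If instead $0$ is isolated, put $r=d(0,\MM\setminus\{0\})>0$; then $d(0,x)\ge r$ for $x\neq 0$, and the estimate $\alpha(t)\le\alpha(r)+\Lip(\alpha)(t-r)$ for $t\ge r$ yields $\alpha(d(0,x))/d(0,x)\le\alpha(r)/r+\Lip(\alpha)$, so $L=\alpha(r)/r+\Lip(\alpha)$ works. The only point requiring care is this combination of the estimate at the origin with the estimate away from it into a single global constant, which succeeds precisely because every pair either avoids $0$ or has the form $\{x,0\}$. This closes the cycle and establishes all four equivalences, with the unit identified as $\zeta$.
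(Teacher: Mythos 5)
Your proof is correct and follows essentially the same route as the paper's: you force any unit to coincide with $\zeta$ (the paper intersects over all $f\in\Lip_0(\MM)$, you test against the single witness $z\mapsto d(z,0)$, which is cleaner), and you reduce the remaining equivalences to Lipschitz-ness of $\zeta$ away from the base point (Lemma~\ref{lem:comp}~\ref{lem:comp:2}) plus the behavior of $\alpha$ near zero. Your explicit estimates in (iv)$\Rightarrow$(ii) simply unpack what the paper cites as Lemma~\ref{lem:equivalpha}, and your routing of (iii)$\Rightarrow$(ii) through (iv) is, if anything, slightly more careful than the paper's terse attribution of (ii)$\Leftrightarrow$(iii) to Lemma~\ref{lem:comp}~\ref{lem:comp:2} alone.
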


\begin{proof}
Suppose that $e\in \Lip_0(\MM)$ is a unit. Then $e(x)=\zeta(x)$ for all $x\in \NN:=\cup_{f\in\Lip_0(f)} (\MM \setminus f^{-1}(0))$. Since $\NN=\MM\setminus\{0\}$, $e=\zeta$ and so $\zeta$ is continuous.
This proves \ref{lem:unit:1} $\Rightarrow$ \ref{lem:unit:2}, and the converse is clear. The equivalence between \ref{lem:unit:2} and \ref{lem:unit:3} follows from Lemma~\ref{lem:comp}~\ref{lem:comp:2}, and the equivalence between \ref{lem:unit:3} and \ref{lem:unit:4} follows from
Lemma~\ref{lem:equivalpha}.
\end{proof}

We point out here that there are maps $\alpha\colon(0,\infty) \to (0,\infty)$ satistying $\Mop(\alpha)(\KK(\alpha)+2)\le 1$, so that, in light of Lemma~\ref{lem:lipConstantMultiplication}, the space $\Lip_0(\MM)$ becomes a Banach algebra with the multiplication $\odot_\alpha$; take for instance $\alpha(t)=3t$ for $t>0$. However, there is no choice of $\alpha$ for which $\Lip_0(\MM)$ becomes a unital Banach algebra (that is, a Banach algebra with a norm-one unit). Indeed, if $\Lip_0(\MM)$ had a unit $e$ with $\Lip(e)=1$, by Lemma~\ref{lem:unit} it would have to be $e=\zeta$. Then we would have $\alpha(t)\le t$ for all $t\in\{ (d(0,x) \colon x\in \MM\}$. Therefore $\Mop(\alpha)\ge 1$ and so $\Mop(\alpha)(\KK(\alpha)+2)\ge 2$.

\begin{Remark}
Given a Banach algebra $X$ there is an explicit formula for an equivalent norm $|\cdot|$ on $X$ such that $(X,|\cdot|)$ becomes a unital Banach algebra (see \cite{DalesBook}*{Proposition 2.1.9}). However, as we mentioned above, this renorming leads to the loss of the lattice structure of the unit ball of $\Lip_0(\MM)$.
\end{Remark}

\begin{Remark}
Using Lemma~\ref{lem:w*nets} the multiplication $\odot_\alpha$ on $\Lip_0(\MM)$ is $w^*$-$w^*$ separately continuous, or more precisely it is $w^*$-$w^*$ separately continuous on bounded sets and then we use the Banach-Dieudonn\'e theorem (see, e.g., \cite{AlbiacKalton2016}*{Appendix G.8}). So in the terminology of \cite{RundeBook}*{Chapter 5}, $(\Lip_0(\MM),\F(\MM))$ is a dual Banach algebra. Moreover, in the complex scalar case it is easy to see that the involution on $\Lip_0(\MM)$ defined by $f\mapsto \overline{f}$ shows that $\Lip_0(\MM)$ is also a Banach $*$-algebra (see \cite{DalesBook}*{Definition 3.1.1} for the corresponding definition).
\end{Remark}

\section{From Lipschitz algebras over unbounded metric spaces to Lipschitz algebras over bounded metric spaces}\label{Sec:4}
\noindent
In this section we deal only with Banach spaces over the real field. In \cite{WeaverBook2018}*{Chapter 7} there are many results about the algebra $\Lip_0(\B)$ for bounded metric spaces $\B$, which now easily transfer to the ordered Banach algebra $\Lip_0(\MM)$ over an unbounded metric space $\mathcal M$. Note however that in \cite{WeaverBook2018} the case of complex Banach spaces is not dealt with, which is the reason why we deal here with real Banach spaces only.

Troughout this section $(\MM,d,0)$ will be a pointed metric space, $\alpha\colon (0,\infty)\to(0,\infty)$ will be a map with $\Mop(\alpha)(\KK(\alpha)+2)\le 1$, and $\mu$ and $\zeta$ will be the functions defined in \eqref{eq:notation:a} and \eqref{eq:notation:b} respectively. We will not a priori assume that $\Lip_0(\MM)$ has a natural unit, i.e., we do not impose $\zeta$ to be continuous at $0$.

\begin{Definition}
Let $(\NN,d,0)$ be a pointed metric space and let $Y\subset \Lip_0(\NN)$ be a subalgebra.
\begin{enumerate}[label={(\roman*)}, leftmargin=*, widest=ii]
\item We say that $Y$ is \emph{order complete}, if it is stable under pointwise convergence of norm-bounded increasing nets.
\item We say that $Y$ is a \emph{linear complete sublattice} if it is closed under taking the supremum of an arbitrary, possibly infinite, set of functions that are uniformly bounded in the Lipschitz norm.
\end{enumerate}
\end{Definition}

\begin{Lemma}\label{lem:fg-cunit}
Suppose that $\Lip_0(\MM)$ has a natural unit. Let $f$, $g\in \Lip_0(\MM)$, and $c\in[0,\infty)$. Then, if $1_\MM$ denotes the unit of $\Lip(\MM)$, and $Q_\alpha^L$ is as in \eqref{eq:S*}, $Q_\alpha^L(f)\vee (Q_\alpha^L(g)-c \,1_\MM)\in \Lip_0(\MM)$ and, if $P_\alpha^L$ is as in \eqref{eq:T*},
\[
P_\alpha^L( Q_\alpha^L(f)\vee (Q_\alpha^L(g)-c \,1_\MM) )= f\vee (g-c\, \zeta).
\]
\end{Lemma}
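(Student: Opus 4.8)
The plan is to reduce the whole statement to two structural facts: that $Q_\alpha^L$ is simultaneously a linear \emph{and} a lattice isomorphism onto $\Lip_0(\B)$, and that $\zeta$ is the $Q_\alpha^L$-preimage of the unit of $\Lip_0(\B)$. First I would unpack the standing hypothesis. Since $\Lip_0(\MM)$ has a natural unit, Lemma~\ref{lem:unit} forces $\zeta$ to be Lipschitz and to be that unit; hence $g-c\,\zeta\in\Lip_0(\MM)$, and because $\Lip_0(\MM)$ is a lattice (the pointwise maximum of two Lipschitz functions vanishing at $0$ is again Lipschitz and vanishes at $0$), the element $f\vee(g-c\,\zeta)$ already lies in $\Lip_0(\MM)$. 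The same hypothesis, via Lemma~\ref{lem:unit}~\ref{lem:unit:4} together with Proposition~\ref{lem:propertiesOfTransformation}~\ref{PoT:6}, guarantees that $0$ is an isolated point of $\B$, which is exactly what is needed for the relevant unit to be a genuine element of $\Lip_0(\B)$.

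Next I would record the key property of $Q_\alpha^L$. By formula~\eqref{eq:S*}, for $x\neq 0$ the value $Q_\alpha^L(h)(\mu(x))$ is obtained from $h(x)$ by dividing by the strictly positive scalar $\zeta(x)$, while $Q_\alpha^L(h)$ vanishes at the base point. Since dividing pointwise by a strictly positive function preserves both linear combinations and pointwise maxima, $Q_\alpha^L$ is linear and satisfies $Q_\alpha^L(u\vee v)=Q_\alpha^L(u)\vee Q_\alpha^L(v)$ for all $u,v\in\Lip_0(\MM)$. Evaluating~\eqref{eq:S*} at $h=\zeta$ gives $Q_\alpha^L(\zeta)(\mu(x))=1$ for $x\neq 0$ and $0$ at the base point; that is, $1_\MM:=Q_\alpha^L(\zeta)$ is precisely the unit of $\Lip_0(\B)$ (which belongs to $\Lip_0(\B)$ because $0$ is isolated in $\B$).

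Combining these observations, I would compute
\[
Q_\alpha^L\bigl(f\vee(g-c\,\zeta)\bigr)
=Q_\alpha^L(f)\vee Q_\alpha^L(g-c\,\zeta)
=Q_\alpha^L(f)\vee\bigl(Q_\alpha^L(g)-c\,1_\MM\bigr),
\]
using first the lattice-homomorphism property and then linearity together with $Q_\alpha^L(\zeta)=1_\MM$. In particular the right-hand side lies in $\Lip_0(\B)$, which is the membership assertion. Finally, since $P_\alpha^L$ is the inverse of $Q_\alpha^L$ by Theorem~\ref{thm:boundedEquivToUnbounded}, applying $P_\alpha^L$ to both sides yields $P_\alpha^L\bigl(Q_\alpha^L(f)\vee(Q_\alpha^L(g)-c\,1_\MM)\bigr)=f\vee(g-c\,\zeta)$, as claimed.

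I expect essentially no serious obstacle here: the content is the simple observation that division by the strictly positive weight $\zeta$ turns $Q_\alpha^L$ into an order isomorphism, and that the unit $\zeta$ is carried to the unit $1_\MM$. If one prefers to bypass the lattice-homomorphism language, the displayed identity can instead be verified by a direct pointwise computation from~\eqref{eq:T*} and~\eqref{eq:S*}: at the base point both sides vanish, and for $x\neq 0$ one pulls the factor $\zeta(x)>0$ through the maximum in $P_\alpha^L$. The only place the standing hypothesis is genuinely used is to ensure the Lipschitzness of $\zeta$, equivalently that $0$ is isolated in $\B$, which is what makes $1_\MM$ a genuine element of $\Lip_0(\B)$ and $f\vee(g-c\,\zeta)$ a genuine element of $\Lip_0(\MM)$.
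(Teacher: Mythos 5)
Your proof is correct and is essentially the paper's own argument carried out in full: the paper disposes of this lemma with the single line ``It is a routine checking,'' and your verification (that $Q_\alpha^L$ is a lattice homomorphism because it divides by the strictly positive weight $\zeta$, that $Q_\alpha^L(\zeta)$ is the unit of $\Lip_0(\B(\MM,\alpha))$, and that $P_\alpha^L=(Q_\alpha^L)^{-1}$) is exactly that routine check, with the alternative direct pointwise computation from \eqref{eq:T*} and \eqref{eq:S*} being an equally valid phrasing of it. You also correctly repair the statement's slip: the membership assertion should be read in $\Lip_0(\B(\MM,\alpha))$, not $\Lip_0(\MM)$, and $1_\MM$ is the function equal to $1$ on $\B(\MM,\alpha)\setminus\{0\}$ and $0$ at the base point, which lies in $\Lip_0(\B(\MM,\alpha))$ precisely because the natural-unit hypothesis forces $0$ to be isolated there.
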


\begin{proof}
It is a routine checking.
\end{proof}

\begin{Theorem}\label{thm:weakStarClosedSubalgebras}
A subalgebra $Y\subset \Lip_0(\MM)$ is order complete if and only if it is $w^*$-closed. Moreover, if $Y$ is order complete then:
\begin{enumerate}[label={(\roman*)}, leftmargin=*, widest=ii]
\item\label{thm:wscs:1} $Y$ is linear complete sublattice.
\item\label{thm:wscs:2} If $\Lip_0(\MM)$ has a natural unit, we have $f\vee (g-c\, \eta)\in Y$ for all $f$, $g\in Y$ and all $c\geq 0$.
\end{enumerate}
\end{Theorem}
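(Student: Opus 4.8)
The plan is to reduce everything to the already-developed theory of $\Lip_0(\B)$ over the \emph{bounded} space $\B=\B(\MM,\alpha)$ by transporting the relevant structure along the isomorphism $Q_\alpha^L$ of Proposition~\ref{cor:canonicalMorphism}. The crucial observation is that $Q_\alpha^L$ is simultaneously: (a)~an algebra isomorphism from $(\Lip_0(\MM),\odot_\alpha)$ onto $(\Lip_0(\B),\cdot)$; (b)~an order isomorphism, because $Q_\alpha^L(f)(\mu(x))=f(x)/\zeta(x)$ is a strictly positive pointwise rescaling and therefore preserves the pointwise order as well as arbitrary suprema; and (c)~a $w^*$-$w^*$ homeomorphism whose inverse $P_\alpha^L$ is $w^*$-$w^*$ continuous and normal. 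Since $Q_\alpha^L$ and $P_\alpha^L$ are bounded, they send norm-bounded (equivalently, uniformly Lipschitz-bounded) families to norm-bounded families. Consequently $Y\mapsto Q_\alpha^L(Y)$ is a bijection between subalgebras of $(\Lip_0(\MM),\odot_\alpha)$ and subalgebras of $\Lip_0(\B)$ that transports order-completeness, the complete-sublattice property, and $w^*$-closedness in both directions. Since $\B$ is bounded by Lemma~\ref{lem:comp}~\ref{lem:comp:1}, the bounded-space results of \cite{WeaverBook2018}*{Chapter 7} apply to $Q_\alpha^L(Y)$.

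Granting this dictionary, I would first dispatch the easy half of the equivalence directly, without the transfer. If $Y$ is $w^*$-closed and $(f_i)_{i\in I}$ is a norm-bounded net in $Y$ increasing pointwise to some $f\in\Lip_0(\MM)$, then Lemma~\ref{lem:w*nets} yields $w^*$-$\lim_i f_i=f$, whence $f\in Y$; thus $Y$ is order complete. For the converse, suppose $Y$ is order complete. Then $Q_\alpha^L(Y)$ is an order-complete subalgebra of $\Lip_0(\B)$, so by the corresponding bounded-space theorem it is $w^*$-closed; applying the $w^*$-$w^*$ continuous map $P_\alpha^L=(Q_\alpha^L)^{-1}$ gives that $Y=P_\alpha^L(Q_\alpha^L(Y))$ is $w^*$-closed as well.

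For the two ``moreover'' items I would run the same transfer. For \ref{thm:wscs:1}, the bounded-case statement that an order-complete subalgebra of $\Lip_0(\B)$ is a complete sublattice shows that $Q_\alpha^L(Y)$ is stable under uniformly bounded suprema; given a uniformly Lipschitz-bounded family $(f_j)\subset Y$, the images $(Q_\alpha^L(f_j))$ are uniformly bounded, their supremum lies in $Q_\alpha^L(Y)$, and pulling back through $P_\alpha^L$ — which commutes with arbitrary suprema by positivity of $\zeta$ — recovers $\sup_j f_j\in Y$. For \ref{thm:wscs:2}, assuming $\Lip_0(\MM)$ has a natural unit, I would combine the bounded-space fact that a $w^*$-closed subalgebra of $\Lip_0(\B)$ is stable under $h'\vee(k'-c\,1_\MM)$ with Lemma~\ref{lem:fg-cunit}: the element $Q_\alpha^L(f)\vee(Q_\alpha^L(g)-c\,1_\MM)$ lies in $Q_\alpha^L(Y)$, and applying $P_\alpha^L$ produces exactly $f\vee(g-c\,\zeta)$ by Lemma~\ref{lem:fg-cunit}, which therefore belongs to $Y$.

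The main obstacle is bookkeeping rather than conceptual: one must verify carefully that each property is transported in \emph{both} directions, keeping track of norm-boundedness (from boundedness of $Q_\alpha^L$ and $P_\alpha^L$), of order- and lattice-preservation (from positivity of the rescaling factor $\zeta$), and of $w^*$-continuity and normality of both maps (Proposition~\ref{cor:canonicalMorphism}). Once this correspondence is in place, the genuine mathematical content is entirely imported from the bounded case treated in \cite{WeaverBook2018}.
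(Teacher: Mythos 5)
Your proposal is correct and follows essentially the same route as the paper: transfer the problem to the bounded space $\B(\MM,\alpha)$ via the multiplication-, order-, and $w^*$-preserving isomorphism $Q_\alpha^L$ of Proposition~\ref{cor:canonicalMorphism}, invoke the bounded-case results of \cite{WeaverBook2018}*{Chapter 7} together with Lemma~\ref{lem:w*nets}, and use Lemma~\ref{lem:fg-cunit} for part \ref{thm:wscs:2}. Your write-up is in fact more detailed than the paper's, which compresses the same argument into two sentences of citations.
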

\begin{proof}
\ref{thm:wscs:1} follows from combining \cite{WeaverBook2018}*{Lemma 7.6, Corollary 7.7 and Lemma 7.9} with Lemma~\ref{lem:w*nets} and Proposition~\ref{cor:canonicalMorphism}, which asserts that $Q_\alpha^L$ is a $w^*$-$w^*$-homeomorphism that preserves multiplication and ordering. To see \ref{thm:wscs:2}, we need to take into account also Lemma~\ref{lem:fg-cunit}.
\end{proof}

\begin{Definition}
A subalgebra $Y$ of a Banach algebra $X$ is said to be an \emph{ideal} if $xy$, $yx\in I$ for every $x\in X$ and $y \in Y$, and it is said to be a \emph{complete band} if it is a linear complete sublattice, and for every $y_1$, $y_2\in Y$ and $x\in X$ with $y_1\leq x\leq y_2$ we have $x\in Y$.
\end{Definition}

\begin{Definition}
Given a metric space $\NN$ and a subspace $Y$ of $\Lip_0(\NN)$, define the \emph{hull} of $Y$ as the closed set
\[
\HH_\NN(Y):=\{x\in\NN\colon f(x)=0\text{ for all }f\in Y\}.
\]
Given a closed set $K\subset\MM$, we put
\[
\II_\MM(K):=\{f\in\Lip_0(\NN)\colon f(x)=0\text{ for all }x\in K\}.
\]
\end{Definition}
The indices in $\II_\MM$, resp. $\HH_\NN$, will be omitted if the metric space is clear from the context.

Note that $\HH(\II(K)) = K$ for every closed set $K\subset\NN$ as shown by the Lipschitz map $x\mapsto d(x,K)$.

\begin{Lemma}\label{lem:lHK}
Let $P_\alpha^L\colon\Lip_0(\B)\to \Lip_0(\MM)$ be as in \eqref{eq:T*}, where $\B=\B(\MM,\alpha)$. Then,
\[
P_\alpha^L(\II_\B(\HH_\B(Y))) = \II_\MM(\HH_\MM(P_\alpha^L(Y)))
\]
for any subspace $Y$ of $\Lip_0(\B)$.
\end{Lemma}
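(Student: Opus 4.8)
The plan is to understand how the operator $P_\alpha^L$ interacts with the maps $\HH$ and $\II$, and to exploit the fact that $P_\alpha^L$ is essentially multiplication by $\zeta$ composed with the homeomorphism $\mu$. Recall from \eqref{eq:T*} that $P_\alpha^L(h)(x) = \zeta(x)\, h(\mu(x))$ for $h\in\Lip_0(\B)$ and $x\in\MM$. Since $\zeta(x)>0$ for every $x\in\MM\setminus\{0\}$, the value $P_\alpha^L(h)(x)$ vanishes exactly when $h(\mu(x))$ vanishes. This is the key algebraic observation that will let me transfer zero-sets back and forth along $\mu$.

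First I would compute the hull of $P_\alpha^L(Y)$ in $\MM$. For $x\in\MM\setminus\{0\}$ we have $x\in\HH_\MM(P_\alpha^L(Y))$ if and only if $P_\alpha^L(h)(x)=0$ for all $h\in Y$, which by the displayed formula is equivalent to $h(\mu(x))=0$ for all $h\in Y$, i.e.\ to $\mu(x)\in\HH_\B(Y)$. Since $\mu\colon\MM\setminus\{0\}\to\B\setminus\{0\}$ is a bijection (Proposition~\ref{lem:propertiesOfTransformation}~\ref{PoT:1}, \ref{PoT:3}), this says $\HH_\MM(P_\alpha^L(Y))\setminus\{0\} = \mu^{-1}(\HH_\B(Y)\setminus\{0\})$; the base point $0$ lies in both hulls automatically since every Lipschitz function vanishing at $0$ does. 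In short, $\mu$ carries $\HH_\MM(P_\alpha^L(Y))$ onto $\HH_\B(Y)$.

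Next I would unwind the two sides of the claimed identity using this correspondence. A function $F\in\Lip_0(\MM)$ lies in $\II_\MM(\HH_\MM(P_\alpha^L(Y)))$ iff $F$ vanishes on $\HH_\MM(P_\alpha^L(Y))$. Writing $F=P_\alpha^L(h)$ for some $h\in\Lip_0(\B)$ (legitimate since $P_\alpha^L$ is a bijection with inverse $Q_\alpha^L$ by Proposition~\ref{cor:canonicalMorphism}), the formula $F(x)=\zeta(x)h(\mu(x))$ shows that $F$ vanishes on $\HH_\MM(P_\alpha^L(Y))$ iff $h$ vanishes on $\mu(\HH_\MM(P_\alpha^L(Y)))=\HH_\B(Y)$, that is, iff $h\in\II_\B(\HH_\B(Y))$. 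Thus $F\in\II_\MM(\HH_\MM(P_\alpha^L(Y)))$ iff $F\in P_\alpha^L(\II_\B(\HH_\B(Y)))$, which is exactly the desired equality. One should double-check that the image $P_\alpha^L(Y)$ appearing on the right is handled consistently: since $P_\alpha^L$ is a linear bijection, $\HH_\MM(P_\alpha^L(Y))$ depends only on the zero-sets of elements of $Y$ pulled back through $\mu$, so no subtlety about which representative $h$ is chosen arises.

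The main obstacle I anticipate is purely bookkeeping at the base point $0$ and at $x\ne 0$ where $\zeta(x)$ could a priori cause trouble—but since $\zeta(x)=\alpha(d(0,x))>0$ for $x\ne 0$, multiplication by $\zeta(x)$ never creates or destroys a zero away from the origin, and the origin is a forced common zero on all four sets. Hence the positivity of $\zeta$ on $\MM\setminus\{0\}$ is what makes the zero-set correspondence clean, and the whole argument reduces to the elementary equivalence ``$\zeta(x)h(\mu(x))=0 \iff h(\mu(x))=0$'' combined with the bijectivity of $\mu$ and of $P_\alpha^L$. I do not expect any genuinely hard analytic step; the care lies in making the $\HH$–$\II$ translation precise and in recording the bijectivity facts from Proposition~\ref{lem:propertiesOfTransformation} and Proposition~\ref{cor:canonicalMorphism}.
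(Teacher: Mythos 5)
Your proof is correct and is exactly the argument the paper has in mind: the paper's own proof is the one-line remark that the identity ``is clear from Proposition~\ref{cor:canonicalMorphism} and the definition of $P_\alpha^L$,'' and your write-up simply spells out what that means, namely that $P_\alpha^L(h)(x)=\zeta(x)h(\mu(x))$ with $\zeta>0$ off the origin, together with the bijectivity of $\mu$ and of $P_\alpha^L$, makes the zero-set correspondence exact. No discrepancy with the paper's approach; you have just made explicit the details it leaves to the reader.
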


\begin{proof}
It is clear from Proposition~\ref{cor:canonicalMorphism} and the definition of $P_\alpha^L$.
\end{proof}

\begin{Theorem}\label{thm:weakStarClosedIdeals}
Let $Y\subset \Lip_0(\MM)$ be an ideal. Then the following conditions are equivalent.
\begin{enumerate}[label={(\roman*)}, leftmargin=*, widest=iii]
\item\label{wscs1} $Y$ is order complete.
\item\label{wscs2} $Y$ is $w^*$-closed.
\item\label{wscs3} $Y$ is a complete band.
\item\label{wscs4} $Y = \II(\HH(Y))$.
\item\label{wscs5} There exists a closed set $K\subset \MM$ such that $Y = \II(K)$.
\end{enumerate}
\end{Theorem}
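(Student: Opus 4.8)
The plan is to reduce all five conditions to the corresponding statements for the bounded metric space $\B=\B(\MM,\alpha)$, where the characterization of $w^*$-closed ideals is already available for Lipschitz algebras over bounded spaces, and then to carry them back through the isomorphism $Q_\alpha^L$ of Proposition~\ref{cor:canonicalMorphism}. Two of the equivalences require no transfer at all. Since an ideal is in particular a subalgebra, \ref{wscs1}$\Leftrightarrow$\ref{wscs2} is exactly Theorem~\ref{thm:weakStarClosedSubalgebras}. And \ref{wscs4}$\Leftrightarrow$\ref{wscs5} holds in any pointed metric space: if $Y=\II(\HH(Y))$ then $K:=\HH(Y)$ is closed and $Y=\II(K)$, while if $Y=\II(K)$ for a closed set $K$ then, by the identity $\HH(\II(K))=K$ recorded above, $\HH(Y)=K$ and hence $\II(\HH(Y))=\II(K)=Y$. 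So it remains to link these two clusters, together with \ref{wscs3}, by a single transfer argument.

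First I would record the structural features of $Q_\alpha^L$. By Proposition~\ref{cor:canonicalMorphism} it is a bijection onto $\Lip_0(\B)$ with inverse $P_\alpha^L$, a $w^*$-$w^*$ homeomorphism, normal, and simultaneously an algebra isomorphism (sending $\odot_\alpha$ to pointwise multiplication) and an order isomorphism. Consequently $Y$ is an ideal for $\odot_\alpha$ if and only if $Q_\alpha^L(Y)$ is an ideal of $\Lip_0(\B)$; $Y$ is $w^*$-closed if and only if $Q_\alpha^L(Y)$ is; and, since $Q_\alpha^L$ is a $w^*$-homeomorphic order isomorphism and, by Lemma~\ref{lem:w*nets}, pointwise convergence of norm-bounded nets coincides with $w^*$-convergence, it carries norm-bounded increasing pointwise-convergent nets to such nets in both directions, so $Y$ is order complete if and only if $Q_\alpha^L(Y)$ is. Because $Q_\alpha^L$ and $P_\alpha^L$ are mutually inverse order isomorphisms, lattice operations, suprema of uniformly Lipschitz-bounded families, and order intervals are all preserved, whence $Y$ is a complete band if and only if $Q_\alpha^L(Y)$ is. Finally, writing $Z=Q_\alpha^L(Y)$ and applying Lemma~\ref{lem:lHK} to the subspace $Z$ of $\Lip_0(\B)$ gives $\II_\MM(\HH_\MM(Y))=P_\alpha^L(\II_\B(\HH_\B(Z)))$; as $P_\alpha^L$ is injective, $Y=\II_\MM(\HH_\MM(Y))$ holds exactly when $Z=\II_\B(\HH_\B(Z))$. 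Thus each of \ref{wscs1}--\ref{wscs4} for $Y$ is equivalent to the same condition for the ideal $Z=Q_\alpha^L(Y)$ of $\Lip_0(\B)$.

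The proof then concludes by invoking the bounded-space counterpart: for an ideal of $\Lip_0(\B)$ with $\B$ bounded (here $\B$ is bounded by Lemma~\ref{lem:comp}~\ref{lem:comp:1}), order completeness, $w^*$-closedness, being a complete band, and coinciding with $\II_\B(\HH_\B(\cdot))$ are all equivalent; this is Weaver's characterization of $w^*$-closed ideals, \cite{WeaverBook2018}*{Chapter~7}. Combining it with the transfers above closes the cycle through \ref{wscs1}--\ref{wscs4}, and \ref{wscs4}$\Leftrightarrow$\ref{wscs5} was already established. The main obstacle I anticipate is checking cleanly that the complete-band property survives the transfer: the order-interval condition is immediate from $Q_\alpha^L$ being an order isomorphism, but completeness of the sublattice concerns suprema of arbitrary uniformly bounded families, and one must use that $Q_\alpha^L$ is a normal $w^*$-homeomorphic lattice isomorphism so that such suprema, realized as $w^*$-limits of increasing nets of finite suprema, are carried to the corresponding suprema in $\Lip_0(\B)$. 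A secondary point to treat carefully is that Weaver's statements are typically phrased for complete bounded spaces, which is harmless since $\Lip_0(\B)$ and its ideal theory are unchanged under passing to the completion of $\B$.
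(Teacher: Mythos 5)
Your argument is correct and is essentially the paper's proof: the paper likewise reduces everything to Weaver's bounded-space results through the isomorphism $Q_\alpha^L$/$P_\alpha^L$ of Proposition~\ref{cor:canonicalMorphism} together with Lemma~\ref{lem:lHK}, merely organizing the reduction as a cycle of implications (\ref{wscs1}$\Leftrightarrow$\ref{wscs2} by Theorem~\ref{thm:weakStarClosedSubalgebras}, \ref{wscs2}$\Rightarrow$\ref{wscs3} via \cite{WeaverBook2018}*{Lemma 7.16}, \ref{wscs3}$\Rightarrow$\ref{wscs4} via the proof of Theorem 6.19 there and Lemma~\ref{lem:lHK}, with \ref{wscs4}$\Rightarrow$\ref{wscs5} and \ref{wscs5}$\Rightarrow$\ref{wscs1} checked directly in $\MM$) rather than transferring each condition wholesale and quoting the bounded-space equivalence as a package, as you do. One caution: your closing claim that passing to the completion of $\B$ is harmless is not quite accurate, since hull--kernel conditions are sensitive to whether closed sets are taken in $\B$ or in its completion; however, the paper applies Weaver's complete-space results to $\B$ with the same silence, so this is not a gap relative to the paper's own proof.
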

\begin{proof}
The equivalence between \ref{wscs1} and \ref{wscs2} follows from Theorem~\ref{thm:weakStarClosedSubalgebras}. \ref{wscs2}$\Rightarrow$\ref{wscs3} follows from \cite{WeaverBook2018}*{Lemma 7.16}, where it is proved for bounded metric spaces, and from Proposition~\ref{cor:canonicalMorphism}. The implication \ref{wscs3}$\Rightarrow$\ref{wscs4} follows from \cite{WeaverBook2018}*{Proof of Theorem 6.19} and Lemma~\ref{lem:lHK}. Finally, \ref{wscs4}$\Rightarrow$ \ref{wscs5} is obvious, and \ref{wscs5}$\Rightarrow$ \ref{wscs1} is immediate from the definition.
\end{proof}

\begin{Theorem}\label{thm:keyPartInTheIntersectionTheorem}
If $Y\subset \Lip_0(\MM)$ is an ideal, then $\overline{Y}^{w^*} = \II(\HH(Y))$.
\end{Theorem}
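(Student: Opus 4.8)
The plan is to transfer the corresponding statement for bounded metric spaces, namely \cite{WeaverBook2018}*{Theorem 6.19}, across the algebra isomorphism $Q_\alpha^L$, in the same spirit as the equivalences established in Theorem~\ref{thm:weakStarClosedIdeals}. Write $\B=\B(\MM,\alpha)$ and recall from Proposition~\ref{cor:canonicalMorphism} that $Q_\alpha^L\colon(\Lip_0(\MM),\odot_\alpha)\to(\Lip_0(\B),\cdot)$ is simultaneously an algebra isomorphism and a $w^*$-$w^*$ homeomorphism, with inverse $P_\alpha^L$. Since $Q_\alpha^L$ carries the whole algebra onto the whole algebra and intertwines $\odot_\alpha$ with the pointwise product, the image $Q_\alpha^L(Y)$ is an \emph{ideal} of $\Lip_0(\B)$ (not merely a subalgebra). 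Being a $w^*$-homeomorphism, $Q_\alpha^L$ commutes with taking $w^*$-closures, so that $Q_\alpha^L(\overline{Y}^{w^*})=\overline{Q_\alpha^L(Y)}^{w^*}$.

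Next I would apply the bounded-space result to $\B$: for the ideal $Q_\alpha^L(Y)$ of $\Lip_0(\B)$ it gives $\overline{Q_\alpha^L(Y)}^{w^*}=\II_\B(\HH_\B(Q_\alpha^L(Y)))$. Applying the inverse $P_\alpha^L$ to both sides, the left-hand side becomes $P_\alpha^L(\overline{Q_\alpha^L(Y)}^{w^*})=\overline{Y}^{w^*}$. For the right-hand side I would invoke Lemma~\ref{lem:lHK} with the subspace $Z:=Q_\alpha^L(Y)$ of $\Lip_0(\B)$; since $P_\alpha^L(Z)=Y$, that lemma yields
\[
P_\alpha^L\bigl(\II_\B(\HH_\B(Q_\alpha^L(Y)))\bigr)=\II_\MM(\HH_\MM(Y)).
\]
Combining the two computations gives $\overline{Y}^{w^*}=\II_\MM(\HH_\MM(Y))$, which is the assertion. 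As a consistency check on one inclusion, note it is intrinsic that $\overline{Y}^{w^*}\subseteq\II(\HH(Y))$: every $f\in Y$ vanishes on $\HH(Y)$, so $Y\subseteq\II(\HH(Y))=\II(K)$ for the closed set $K=\HH(Y)$, and this set is $w^*$-closed by the implication \ref{wscs5}$\Rightarrow$\ref{wscs2} of Theorem~\ref{thm:weakStarClosedIdeals}; the real content is the reverse inclusion, which is exactly what the transfer supplies.

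The main obstacle I anticipate is bookkeeping rather than conceptual. One must verify that the hull-kernel operators $\HH$ and $\II$ interact correctly with $Q_\alpha^L$ and $P_\alpha^L$, which is precisely the role of Lemma~\ref{lem:lHK}, and that $Q_\alpha^L(Y)$ is genuinely an ideal of $(\Lip_0(\B),\cdot)$ so that \cite{WeaverBook2018}*{Theorem 6.19} applies to it. Once these two points are in place, every step is either a direct citation for the bounded metric space $\B$ or a formal application of the isomorphism $Q_\alpha^L$ and its inverse $P_\alpha^L$, so no genuinely new estimate is required.
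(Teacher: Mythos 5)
Your transfer scheme is exactly the one the paper uses: its proof of Theorem~\ref{thm:keyPartInTheIntersectionTheorem} is a one-line combination of the bounded-space result, Proposition~\ref{cor:canonicalMorphism}, and Lemma~\ref{lem:lHK}, and all of your bookkeeping (that $Q_\alpha^L(Y)$ is an ideal, that a $w^*$-$w^*$ homeomorphism commutes with $w^*$-closures, and the application of Lemma~\ref{lem:lHK} with $Z=Q_\alpha^L(Y)$) is correct. The one genuine problem is the bounded-space ingredient itself. The paper cites \cite{AP19}*{Proposition 3.2}, which says precisely what you need: for a bounded metric space $\B$ and an \emph{arbitrary} ideal $Z\subset\Lip_0(\B)$ one has $\overline{Z}^{w^*}=\II_\B(\HH_\B(Z))$. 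What you cite instead, \cite{WeaverBook2018}*{Theorem 6.19}, is not that statement: as this paper itself uses it (proof of Theorem~\ref{thm:weakStarClosedIdeals}, implication \ref{wscs3}$\Rightarrow$\ref{wscs4}), Weaver's Theorem 6.19 concerns \emph{complete bands}, i.e.\ it yields $Z=\II(\HH(Z))$ for ideals that are already $w^*$-closed (equivalently, complete bands), and by itself says nothing about the $w^*$-closure of a general, non-closed ideal --- which is the entire content of the present theorem.

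The gap is fillable, but it needs two steps you do not carry out: (1) the $w^*$-closure of an ideal is again an ideal, which requires the separate $w^*$-$w^*$ continuity of the product (for $\B$ bounded this follows from Lemma~\ref{lem:w*nets} on bounded sets together with the Banach--Dieudonn\'e theorem, as in the dual-Banach-algebra remark at the end of Section~\ref{subsec:Multiplication}); and (2) $\HH_\B\bigl(\overline{Z}^{w^*}\bigr)=\HH_\B(Z)$, which holds because point evaluations are $w^*$-continuous. With these in hand one may apply \cite{WeaverBook2018}*{Lemma 7.16} and the proof of Theorem 6.19 to the $w^*$-closed ideal $\overline{Z}^{w^*}$ and conclude. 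In fact, once you grant (1) and (2), you can apply the paper's own Theorem~\ref{thm:weakStarClosedIdeals}, implication \ref{wscs2}$\Rightarrow$\ref{wscs4}, directly to $\overline{Y}^{w^*}$ inside $\Lip_0(\MM)$, which makes the transfer to $\B$ unnecessary altogether; so either repair the citation (to \cite{AP19}*{Proposition 3.2}, as the paper does) or add these two steps explicitly.
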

\begin{proof}
The result follows by combining the corresponding result for bounded metric spaces proved in \cite{AP19}*{Proposition 3.2} with Proposition~\ref{cor:canonicalMorphism} and Lemma~\ref{lem:lHK}.
\end{proof}

Let $\NN$ be another metric space. A map $T\colon\Lip_0(\MM)\to \Lip_0(\NN)$ is an \emph{algebra homomorphism with respect to $\alpha$} if it is linear and preserves the multiplication $\odot_\alpha$. If $\alpha$ is known, we simply write \emph{algebra homomorphism}
\begin{Proposition}\label{p:algHomBdd}
If $T\colon\Lip_0(\MM)\to \Lip_0(\NN)$ is an algebra homomorphism then it is bounded (i.e., continuous) and preserves ordering.
\end{Proposition}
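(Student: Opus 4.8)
The plan is to deduce both assertions by transporting the problem to the bounded case, using Proposition~\ref{cor:canonicalMorphism}. Write $Q^L_\MM,P^L_\MM$ and $Q^L_\NN,P^L_\NN$ for the order-isomorphisms of Proposition~\ref{cor:canonicalMorphism} attached to $\MM$ and $\NN$, and set
\[
\widetilde T := Q^L_\NN\circ T\circ P^L_\MM\colon \Lip_0(\B(\MM))\to \Lip_0(\B(\NN)).
\]
Since $Q^L_\MM(f\odot_\alpha g)=Q^L_\MM(f)\,Q^L_\MM(g)$, its inverse $P^L_\MM$ turns the ordinary pointwise product into $\odot_\alpha$, while $Q^L_\NN$ does the reverse; as $T$ preserves $\odot_\alpha$, a direct check gives $\widetilde T(FG)=\widetilde T(F)\widetilde T(G)$, so $\widetilde T$ is an algebra homomorphism for pointwise multiplication between two Lipschitz algebras over \emph{bounded} metric spaces. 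For these, the automatic boundedness and positivity of algebra homomorphisms is known (Weaver). Transporting back through $T=P^L_\NN\circ\widetilde T\circ Q^L_\MM$, and using that $Q^L_\MM,P^L_\NN$ are bounded and order-preserving, then yields that $T$ is bounded and order-preserving.

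I would also record a short self-contained proof of boundedness that bypasses the bounded-space reduction. Equipped with $\odot_\alpha$, the space $\Lip_0(\NN)$ is a commutative Banach algebra by Lemma~\ref{lem:lipConstantMultiplication}, and for each $y\in\NN\setminus\{0\}$ the functional $\phi_y(h):=h(y)/\zeta(y)$ is a character, since $\phi_y(h\odot_\alpha k)=h(y)k(y)/\zeta^2(y)=\phi_y(h)\phi_y(k)$. These characters separate the points of $\Lip_0(\NN)$, so the algebra is semisimple. Every nonzero multiplicative functional on a Banach algebra is continuous, so each $\phi_y\circ T$ is a continuous character of $(\Lip_0(\MM),\odot_\alpha)$ or is identically zero; feeding this into the closed-graph theorem closes the argument: if $f_n\to 0$ and $T(f_n)\to g$, then $\phi_y(g)=\lim_n(\phi_y\circ T)(f_n)=0$ for every $y$, whence $g=0$ by semisimplicity, and $T$ is bounded. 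For the order statement, the identity $T(f)(y)=\zeta(y)\,(\phi_y\circ T)(f)$ reduces everything to showing that every character of $(\Lip_0(\MM),\odot_\alpha)$ is nonnegative on $\Lip_0^+(\MM)$; note that the real-scalar hypothesis of the section guarantees these characters are real-valued.

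The main obstacle is precisely this positivity of characters, i.e. identifying the spectrum of $(\Lip_0(\MM),\odot_\alpha)$ and checking that it acts nonnegatively on the cone. Squares behave well, $h\odot_\alpha h=h^2/\zeta\ge 0$ and a real character $\psi$ sends them to $\psi(h)^2\ge 0$, but a naive "positive equals sum of squares" density argument fails: taking a square root of $f\zeta$ destroys the Lipschitz condition at interior zeros of $f$, so not every $f\in\Lip_0^+(\MM)$ is even a finite sum of $\odot_\alpha$-squares. This is exactly where the bounded-space reduction is indispensable: under $Q^L_\MM$ a character of $\Lip_0(\MM)$ becomes a character of $\Lip_0(\B(\MM))$ over a bounded space, where Weaver's description of the spectrum exhibits it as a point evaluation (or a weak$^*$-limit of such), hence manifestly nonnegative on positive functions, and $Q^L_\MM,P^L_\MM$ preserve the order. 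I therefore expect the cleanest complete proof to run entirely through Proposition~\ref{cor:canonicalMorphism} and the bounded case, with the semisimplicity argument of the previous paragraph serving as an independent verification of the boundedness half.
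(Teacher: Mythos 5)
Your main argument coincides with the paper's proof: the paper disposes of this proposition in one line, citing Weaver's Proposition~7.29 (the bounded case) together with Proposition~\ref{cor:canonicalMorphism}, and that is exactly your conjugation $\widetilde T = Q^L_\NN\circ T\circ P^L_\MM$ spelled out, including the back-transfer using that $Q^L_\MM$ and $P^L_\NN$ are bounded and order-preserving. Your supplementary semisimplicity argument for the boundedness half is also correct and is a genuinely different route: the characters $\phi_y(h)=h(y)/\zeta(y)$ separate points, so $(\Lip_0(\NN),\odot_\alpha)$ is a semisimple commutative Banach algebra (the section's standing hypothesis $\Mop(\alpha)(\KK(\alpha)+2)\le 1$ gives submultiplicativity, and characters on Banach algebras are automatically continuous), and the closed-graph argument then yields continuity of $T$ with no reference to Weaver's book --- a self-contained proof of half the statement. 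You are also right about its limitation: positivity of characters is not free (the sum-of-squares trick fails for $\odot_\alpha$, as you note), so the order-preservation half still requires either the transfer to the bounded case or an identification of the spectrum as weak$^*$ limits of the point evaluations $\mu(x)$, which is precisely what the paper's citation of Weaver delivers in one stroke.
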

\begin{proof}
It follows from \cite{WeaverBook2018}*{Proposition 7.29}, where it is proved for bounded metric spaces, and Proposition~\ref{cor:canonicalMorphism}.
\end{proof}

By $\Delta(\MM)$ we denote the set of all nonzero algebra homomorphisms from $\Lip_0(\MM)$ into $\Rea$, where $\Rea$ is of course equipped with its standard addition and multiplication. By $\Delta_0(\MM)$ we denote the set $\Delta(\MM)\cup \{0\}$, where $0$ is the zero map. By $\Delta^*(\MM)$ and $\Delta_0^*(\MM)$ we denote the set of all members of $\Delta(\MM)$ and $\Delta_0(\MM)$ respectively that are $w^*$-continuous.

A straightforward computation gives that, if we regard $\F(\MM)$ as a subspace of $(\Lip_0(\MM))^*$, then $\mu(x)\in\Delta(\MM)$ for all $x\in\MM\setminus\{0\}$.
The following lemma identifies the set $\Delta_0^*(\MM)$.

\begin{Lemma}\label{lem:normalCharacters}
Let $\mu$ be as in \eqref{eq:notation:b}. For $\omega\in\Delta(\MM)$ the following are equivalent:
\begin{enumerate}[label={(\roman*)}, leftmargin=*, widest=iii]
\item\label{nC:1} $\omega$ is normal.
\item\label{nC:2} $\omega$ is $w^*$-continuous.
\item\label{nC:3} $\omega =\mu(x)$ for some $x\in\MM\setminus\{0\}$.
\end{enumerate}
In particular, we have $\Delta_0^*(\MM) = \B(\MM,\alpha)$.
\end{Lemma}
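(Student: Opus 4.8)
The plan is to prove the chain of equivalences \ref{nC:1} $\Rightarrow$ \ref{nC:2} $\Rightarrow$ \ref{nC:3} $\Rightarrow$ \ref{nC:1}, exploiting the dual algebra isomorphism $Q_\alpha^L\colon\Lip_0(\MM)\to\Lip_0(\B)$ provided by Proposition~\ref{cor:canonicalMorphism} to transport the problem to the bounded space $\B=\B(\MM,\alpha)$, where the identification of $w^*$-continuous characters is classical. First I would record the observation already made before the statement: for $x\in\MM\setminus\{0\}$ the functional $\mu(x)\in\F(\MM)\subset(\Lip_0(\MM))^*$ satisfies $\mu(x)(f\odot_\alpha g)=\frac{f(x)g(x)}{\zeta(x)}=\mu(x)(f)\,\mu(x)(g)$, so it is a $w^*$-continuous character. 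This gives \ref{nC:3} $\Rightarrow$ \ref{nC:2} directly. The inclusion $\B(\MM,\alpha)\subset\Delta_0^*(\MM)$ then follows, since $0\in\Delta_0(\MM)$ and every nonzero point of $\B$ is some $\mu(x)$.

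For the substantive direction \ref{nC:2} $\Rightarrow$ \ref{nC:3}, I would transfer via $Q_\alpha^L$. Since $Q_\alpha^L$ is a $w^*$-$w^*$ homeomorphism and an algebra isomorphism (Proposition~\ref{cor:canonicalMorphism}), precomposition sends $w^*$-continuous characters of $\Lip_0(\MM)$ bijectively to $w^*$-continuous characters of $\Lip_0(\B)$; that is, $\omega\mapsto\omega\circ P_\alpha^L$ (equivalently $\omega=\widetilde\omega\circ Q_\alpha^L$ for a character $\widetilde\omega$ of $\Lip_0(\B)$). A $w^*$-continuous character on $\Lip_0(\B)$ is an element of $\F(\B)$ that is multiplicative, and for a \emph{bounded} metric space it is standard (see Weaver, and the fact that $\B$ is bounded by Lemma~\ref{lem:comp}~\ref{lem:comp:1}) that the only such functionals are the evaluations $\delta_\B(b)$ at points $b\in\B\setminus\{0\}$. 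Pulling back through the concrete formulas \eqref{eq:T*} and \eqref{eq:S*}, the evaluation at $b=\mu(x)$ corresponds precisely to $\mu(x)\in\F(\MM)$, giving \ref{nC:3}.

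The remaining equivalence \ref{nC:1} $\Leftrightarrow$ \ref{nC:2} I would obtain the same way. Normality is preserved under $Q_\alpha^L$ because it is a normal isomorphism intertwining the pointwise orderings (Proposition~\ref{cor:canonicalMorphism}), so $\omega$ is normal on $\Lip_0(\MM)$ iff the corresponding character $\widetilde\omega$ is normal on $\Lip_0(\B)$. On the bounded space one invokes the coincidence of normality and $w^*$-continuity for characters of $\Lip_0(\B)$, together with Lemma~\ref{lem:w*nets} which relates $w^*$-convergence to pointwise convergence of bounded nets. This closes the cycle, and combining it with the first paragraph yields the final identification $\Delta_0^*(\MM)=\B(\MM,\alpha)$.

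The main obstacle is entirely concentrated in the bounded-space input: namely, that every $w^*$-continuous (equivalently normal) character of $\Lip_0(\B)$ is a point evaluation $\delta_\B(b)$. Everything else is formal transport through the isomorphism $Q_\alpha^L$ and the explicit formulas \eqref{eq:T*}, \eqref{eq:S*}. I would therefore be careful to cite the correct statement from \cite{WeaverBook2018} (the identification of the $w^*$-spectrum of a Lipschitz algebra over a bounded space with the underlying space) and to verify that the pull-back of $\delta_\B(\mu(x))$ under $Q_\alpha^L$ is exactly $\mu(x)$ rather than a nonzero scalar multiple, which the normalization $\mu(x)(\zeta)=1$ guarantees.
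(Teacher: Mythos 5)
Your proposal is correct and follows essentially the same route as the paper, whose entire proof is "combine \cite{WeaverBook2018}*{Lemma 7.22} with Proposition~\ref{cor:canonicalMorphism}": you transport the problem through the normal, $w^*$-$w^*$ continuous algebra isomorphism $Q_\alpha^L$ to the bounded space $\B(\MM,\alpha)$, invoke Weaver's identification of normal/$w^*$-continuous characters of $\Lip_0(\B)$ with point evaluations, and pull back via \eqref{eq:S*} to see that evaluation at $\mu(x)$ corresponds exactly to $\mu(x)\in\F(\MM)$. Your extra verifications (the pull-back computation and the normalization) are just the details the paper leaves implicit.
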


\begin{proof}
Just combine \cite{WeaverBook2018}*{Lemma 7.22} with Proposition~\ref{cor:canonicalMorphism}.
\end{proof}

The following is an analogue to \cite{WeaverBook2018}*{Theorem 7.23}.

\begin{Theorem}\label{thm:algHomo}
Let $\NN$ be another metric space, and assume that the multiplication $\odot_\alpha$ admits a natural unit in both $\MM$ and in $\NN$. Let $T\colon\Lip_0(\MM)\to\Rea^\NN$ be a mapping. The following are equivalent:
\begin{enumerate}[label={(\roman*)}, leftmargin=*, widest=ii]
\item\label{thm:algH:1} $T\colon \Lip_0(\MM)\to\Lip_0(\NN)$ is a unital normal algebra homomorphism.
\item\label{thm:algH:2} There exists $g\colon\NN\setminus\{0\}\to\MM\setminus\{0\}$ such that $T=D_g$ and $D_g(\Lip_0(\MM))\subset \Lip_0(\NN)$, where $D_g\colon \Lip_0(\MM)\to \Rea^\NN$ is the map given by
\[
D_g(f)(x)=\frac{\alpha(d(0,x))}{\alpha(d(0,g(x)))}f(g(x))=\frac{\zeta(x)}{\zeta(g(x))}f(g(x)), \quad x\in\NN\setminus\{0\}.
\]
\end{enumerate}
\end{Theorem}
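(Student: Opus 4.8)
The plan is to transfer the statement to the bounded setting via the canonical isomorphism $Q_\alpha^L$ and then invoke the classical result \cite{WeaverBook2018}*{Theorem 7.23}, which characterizes unital normal homomorphisms between Lipschitz algebras over bounded metric spaces as weighted composition operators. Since both $\MM$ and $\NN$ admit a natural unit, by Lemma~\ref{lem:unit} the associated bounded spaces $\B(\MM,\alpha)$ and $\B(\NN,\alpha)$ carry a genuine unital Banach algebra structure under pointwise multiplication, and the operators $Q_\alpha^L$ (together with their inverses $P_\alpha^L$) are $w^*$-$w^*$ continuous normal algebra isomorphisms by Proposition~\ref{cor:canonicalMorphism}. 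Thus the conjugated operator $\tilde T := Q_{\alpha,\NN}^L \circ T \circ P_{\alpha,\MM}^L \colon \Lip_0(\B(\MM))\to\Lip_0(\B(\NN))$ is a unital normal algebra homomorphism in the bounded sense precisely when $T$ is.

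First I would prove the implication \ref{thm:algH:2} $\Rightarrow$ \ref{thm:algH:1} by direct computation, since it is the easy direction: assuming $D_g$ maps into $\Lip_0(\NN)$, one checks linearity, that $D_g(\zeta_\MM)=\zeta_\NN$ (so the unit is preserved), and that $D_g(f\odot_\alpha f') = D_g(f)\odot_\alpha D_g(f')$ by substituting the defining formula and cancelling the weight $\zeta(x)/\zeta(g(x))$ appropriately; normality then follows because pointwise-increasing norm-bounded nets are sent to pointwise-increasing nets under the weighted composition. Second, for \ref{thm:algH:1} $\Rightarrow$ \ref{thm:algH:2}, I would apply the bounded result to $\tilde T$ to obtain a map $h\colon \B(\NN)\setminus\{0\}\to\B(\MM)\setminus\{0\}$ with $\tilde T = C_h$, the composition operator $C_h(u)=u\circ h$. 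Using Proposition~\ref{lem:propertiesOfTransformation}~\ref{PoT:1} (injectivity of $\mu$), the map $h$ corresponds to a unique map $g\colon\NN\setminus\{0\}\to\MM\setminus\{0\}$ via $h\circ\mu_\NN = \mu_\MM\circ g$. Unwinding $T = P_{\alpha,\NN}^L\circ C_h\circ Q_{\alpha,\MM}^L$ using the explicit formulas \eqref{eq:T*} and \eqref{eq:S*} should then reproduce exactly the weighted composition formula
\[
T(f)(x)=\zeta_\NN(x)\,(C_h(Q_\alpha^L f))(\mu_\NN(x)) = \zeta_\NN(x)\,(Q_\alpha^L f)(\mu_\MM(g(x))) = \frac{\zeta(x)}{\zeta(g(x))}f(g(x)),
\]
which is $D_g$.

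The main obstacle I anticipate is the careful bookkeeping in the correspondence between $h$ on the bounded space and $g$ on the original space, and in verifying that the weights match up cleanly. Concretely, one must confirm that the abstract composition map $C_h$ produced by the bounded theory actually sends points of $\B(\NN)\setminus\{0\}$ into $\B(\MM)\setminus\{0\}$ (rather than to the distinguished point), which is where the hypothesis that $T$ is a \emph{unital} homomorphism is essential: a homomorphism that fails to be unital could collapse a character to the zero map. The normality hypothesis, via Lemma~\ref{lem:normalCharacters}, guarantees that $T$ sends $w^*$-continuous characters $\mu_\NN(x)\in\Delta_0^*(\NN)$ to $w^*$-continuous characters in $\Delta_0^*(\MM)=\B(\MM,\alpha)$, and this is precisely what furnishes the point map $g$ and pins down that the image lands in $\MM\setminus\{0\}$. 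Once this character-level tracking is made precise, the remaining verification that $D_g(\Lip_0(\MM))\subset\Lip_0(\NN)$ is automatic since $T$ was assumed to take values in $\Lip_0(\NN)$.
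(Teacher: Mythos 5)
Your proposal is correct, but it packages the hard implication differently from the paper. Where you conjugate the whole operator, forming $\tilde T = Q_{\alpha,\NN}^L \circ T \circ P_{\alpha,\MM}^L$ and invoking Weaver's bounded-space Theorem~7.23 as a black box before pulling the composition map $h$ back through $\mu$, the paper never leaves the unbounded setting: it fixes $x\in\NN\setminus\{0\}$, observes that $\mu'(x)\circ T$ is a nonzero (by unitality, since $\mu'(x)(T\zeta)=\mu'(x)(\zeta')=1$) normal character on $\Lip_0(\MM)$, and applies Lemma~\ref{lem:normalCharacters} to conclude $\mu'(x)\circ T=\mu(g(x))$ for some $g(x)\in\MM\setminus\{0\}$, which is already the formula $T=D_g$. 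In other words, the paper transfers Weaver's Lemma~7.22 once (that is Lemma~\ref{lem:normalCharacters}) and then re-runs the short character argument, whereas you transfer the operator and cite the finished bounded theorem; your closing paragraph about character tracking is in fact precisely the paper's proof. Your route is the more systematic one --- it matches how the rest of Section~\ref{Sec:4} is organized --- but it carries extra bookkeeping the paper avoids: you must check that unitality and normality survive conjugation by $Q_\alpha^L$ and $P_\alpha^L$ (true, by Proposition~\ref{cor:canonicalMorphism} and the fact that these are order isomorphisms), and you must identify the unit of $\bigl(\Lip_0(\B(\MM)),\cdot\bigr)$ as $\Ind_{\B(\MM)\setminus\{0\}}$, which lies in $\Lip_0(\B(\MM))$ only because the natural-unit hypothesis forces $0$ to be isolated in $\B(\MM)$ (Lemma~\ref{lem:unit} together with Proposition~\ref{lem:propertiesOfTransformation}~\ref{PoT:6}); this identification of $\Lip_0(\B(\MM))$ with the unital algebra of Lipschitz functions on $\B(\MM)\setminus\{0\}$ is needed before Weaver's Theorem~7.23 literally applies. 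Your easy direction \ref{thm:algH:2}$\Rightarrow$\ref{thm:algH:1} (cancellation of the weight $\zeta(x)/\zeta(g(x))$, preservation of the unit, positivity of the weight giving normality) is exactly what the paper dismisses as straightforward, and your observation about where unitality and normality are each used matches the paper's argument precisely.
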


\begin{proof}
The implication \ref{thm:algH:2}$\Rightarrow$\ref{thm:algH:1} is straightforward. Let us assume that \ref{thm:algH:1} holds. Let $\zeta'$ and $\mu'$ be the maps defined in \eqref{eq:notation:a} corresponding to the metric space $\NN$. Fix $x\in\NN\setminus\{0\}$. Since $\mu'(x)\in\Delta(\NN)$ and $T(\zeta)=\zeta'$ (because $T$ is unital and $\zeta$, $\zeta'$ are the units), we obtain $\mu'(x)\circ T\neq 0$ and so $\mu'(x)\circ T\in \Delta(\MM)$. Since $\mu'(x)$ is normal, $\mu'(x)\circ T$ is normal. Applying Lemma~\ref{lem:normalCharacters} yields $g(x)\in \MM\setminus\{0\}$ such that $\mu'(x)\circ T=\mu(g(x))$, i.e.,
\[
\delta_\NN(x)\circ T=\frac{\zeta'(x)}{\zeta(g(x))}\delta_\MM(g(x)).\qedhere
\]
\end{proof}

The following is an analogue of \cite{WeaverBook2018}*{Theorem 7.26}. It seems this is actually interesting even outside of the framework of Banach algebras.

\begin{Theorem}\label{thm:w*EqualsNorm}
The norm and weak$^*$ topologies of $(\Lip_0(\MM))^*$ coincide on $\Delta_0^*(\MM)$. Moreover, if $\Lip_0(\MM)$ has a natural unit, then $0$ is an isolated point of $\Delta_0^*(\MM)$.
\end{Theorem}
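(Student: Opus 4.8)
The plan is to transfer the corresponding statement for bounded metric spaces, namely \cite{WeaverBook2018}*{Theorem 7.26}, via the isomorphism machinery already established. By Lemma~\ref{lem:normalCharacters} we have the identification $\Delta_0^*(\MM) = \B(\MM,\alpha)$ as a subset of $(\Lip_0(\MM))^*\equiv \F(\MM)$. Under this identification, the relevant topologies on $\Delta_0^*(\MM)$ are precisely the norm and weak$^*$ topologies induced from $(\Lip_0(\MM))^*$; since $\F(\MM)$ is the predual, the weak$^*$ topology on $(\Lip_0(\MM))^*$ restricted to $\F(\MM)$ (and hence to $\B \subset \F(\MM)$) is the weak topology of $\F(\MM)$. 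Thus the first claim is exactly the assertion that the norm and weak topologies coincide on $\B \subset \F(\MM)$, which is Proposition~\ref{lem:propertiesOfTransformation}~\ref{PoT:9}. I would invoke that directly.

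For the second assertion, note that $\Lip_0(\MM)$ having a natural unit means, by Lemma~\ref{lem:unit}, that either $0$ is an isolated point of $\MM$ or $\alpha(0)=0$. By Proposition~\ref{lem:propertiesOfTransformation}~\ref{PoT:6}, this is exactly the condition equivalent to $0$ being an isolated point of $\B = \B(\MM,\alpha)$. Since $\Delta_0^*(\MM) = \B(\MM,\alpha)$ and the norm topology on $\B$ agrees with the topology inherited as the set $\Delta_0^*(\MM)$ (the point $0$ corresponding to the zero homomorphism), the fact that $0$ is isolated in $\B$ translates immediately into $0$ being an isolated point of $\Delta_0^*(\MM)$.

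The only subtlety to check carefully is the bookkeeping that the weak$^*$ topology on $\Delta_0^*(\MM)\subset (\Lip_0(\MM))^*$ matches the weak topology on $\B\subset\F(\MM)$, i.e.\ that the distinguished point $0$ of $\B$ corresponds to the zero functional adjoined in passing from $\Delta(\MM)$ to $\Delta_0(\MM)$. This is immediate from the definition $\mu(0)=0$ in \eqref{eq:notation:b} and the convention that $\delta(0)$ denotes the origin in $\PP(\MM)$. Once this identification is in place, no further computation is needed. I do not anticipate a genuine obstacle here; the content of the theorem has already been distilled into Proposition~\ref{lem:propertiesOfTransformation}, and the proof amounts to translating the statement through the identification $\Delta_0^*(\MM)=\B(\MM,\alpha)$ furnished by Lemma~\ref{lem:normalCharacters}, then citing parts \ref{PoT:9} and \ref{PoT:6} of that proposition.
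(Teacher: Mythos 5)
Your proposal is correct and follows essentially the same route as the paper's own proof: the first assertion is obtained by combining Lemma~\ref{lem:normalCharacters} with Proposition~\ref{lem:propertiesOfTransformation}~\ref{PoT:9}, and the ``moreover'' part by combining Lemma~\ref{lem:unit} with Proposition~\ref{lem:propertiesOfTransformation}~\ref{PoT:6}. The bookkeeping you flag (that the weak$^*$ topology of $(\Lip_0(\MM))^*$ restricted to $\F(\MM)$ is the weak topology, and that $0\in\B$ corresponds to the zero homomorphism) is exactly right and is implicit in the paper's two-line proof.
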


\begin{proof} The first part follows from combining Proposition~\ref{lem:propertiesOfTransformation}~\ref{PoT:9} with Lemma~\ref{lem:normalCharacters}. Combining Lemma~\ref{lem:unit} with Proposition~\ref{lem:propertiesOfTransformation}~\ref{PoT:6} yields the `moreover' part.\end{proof}

\begin{Theorem}\label{thm:spectrum}
Suppose $\Lip_0(\MM)$ has a natural unit. Then the spectrum of the Banach algebra $\Lip_0(\MM)$ verifies
\[
\Delta_0(\MM) = \overline{\B(\MM,\alpha)}^{w^*}\subset \Lip_0(\MM)^{**},
\]
and
\[
\Delta(\MM) = \overline{\B(\MM,\alpha)\setminus\{0\}}^{w^*}\subset \Lip_0(\MM)^{**}.
\]
\end{Theorem}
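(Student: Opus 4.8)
The plan is to regard $(\Lip_0(\MM),\odot_\alpha)$ as a unital commutative Banach algebra and to transfer the whole question to the bounded space $\B:=\B(\MM,\alpha)$ through the isomorphism $Q_\alpha^L$. Since $\Lip_0(\MM)$ has a natural unit, Lemma~\ref{lem:unit} identifies it as $\zeta$, and throughout this section $\Mop(\alpha)(\KK(\alpha)+2)\le 1$, so $\odot_\alpha$ is submultiplicative; hence $\Lip_0(\MM)$ is a unital commutative Banach algebra, and its character space is a weak$^*$-compact (in particular weak$^*$-closed) subset of the unit ball of $\Lip_0(\MM)^*$. Consequently $\Delta(\MM)$ and $\Delta_0(\MM)=\Delta(\MM)\cup\{0\}$ are weak$^*$-compact. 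As recorded before the statement, $\mu(x)\in\Delta(\MM)$ for every $x\in\MM\setminus\{0\}$, so $\B\setminus\{0\}\subset\Delta(\MM)$ and $\B\subset\Delta_0(\MM)$; by weak$^*$-closedness this already gives the inclusions $\overline{\B\setminus\{0\}}^{w^*}\subseteq\Delta(\MM)$ and $\overline{\B}^{w^*}\subseteq\Delta_0(\MM)$. It remains to prove the reverse inclusions, that is, that every character is a weak$^*$-limit of evaluations $\mu(x)$.

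For this I would pass to $\B$ via Proposition~\ref{cor:canonicalMorphism}. Because $Q_\alpha^L\colon\Lip_0(\MM)\to\Lip_0(\B)$ is an algebra isomorphism for the pair $(\odot_\alpha,\,\cdot\,)$ with inverse $P_\alpha^L$, the assignment $\psi\mapsto\psi\circ Q_\alpha^L$ is a weak$^*$-homeomorphism of $\Lip_0(\B)^*$ onto $\Lip_0(\MM)^*$ carrying $\Delta(\B)$ bijectively onto $\Delta(\MM)$ (and $0$ to $0$). Moreover, from \eqref{eq:S*} one computes $\delta_\B(\mu(x))\circ Q_\alpha^L=\mu(x)$, so this homeomorphism sends the evaluations $\{\delta_\B(b):b\in\B\setminus\{0\}\}$ exactly onto $\B\setminus\{0\}$. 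Thus the statement reduces to the bounded assertion $\Delta(\B)=\overline{\{\delta_\B(b):b\in\B\setminus\{0\}\}}^{w^*}$, which is the analogue for bounded metric spaces of what we seek (cf.\ \cite{WeaverBook2018}*{Chapter 7}). Observe that the unit hypothesis forces, through Lemma~\ref{lem:unit} and Proposition~\ref{lem:propertiesOfTransformation}~\ref{PoT:6}, that $0$ is an isolated point of $\B$; hence $1_\B\in\Lip_0(\B)$ and $\Lip_0(\B)$ is genuinely unital.

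The heart of the matter is the density of the evaluations in $\Delta(\B)$, which I would establish directly. Fix $\omega\in\Delta(\B)$ and a basic weak$^*$-neighborhood of $\omega$, determined by $f_1,\dots,f_n\in\Lip_0(\B)$ and $\varepsilon>0$; if it contained no $\delta_\B(b)$ with $b\neq 0$, then $\sum_{i=1}^n(f_i(b)-\omega(f_i))^2\ge\varepsilon^2$ for every $b\in\B\setminus\{0\}$. Since $\B$ is bounded and $0$ is isolated, the function $g=\sum_{i=1}^n (f_i-\omega(f_i)1_\B)^2$ lies in $\Lip_0(\B)$, is bounded below by $\varepsilon^2$ off the origin, and is therefore invertible in $\Lip_0(\B)$ (its pointwise reciprocal, set to $0$ at the isolated point $0$, is again Lipschitz and vanishes at $0$). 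On the other hand $\omega(1_\B)=1$ yields $\omega(g)=\sum_i(\omega(f_i)-\omega(f_i))^2=0$, contradicting the invertibility of $g$. Hence every basic neighborhood of $\omega$ meets $\{\delta_\B(b):b\neq0\}$, so $\Delta(\B)\subseteq\overline{\{\delta_\B(b):b\neq0\}}^{w^*}$; together with the easy inclusion this gives equality, and adjoining the point $0=\delta_\B(0)$ yields the $\Delta_0$ version. Transporting both identities back through $Q_\alpha^L$ finishes the proof. The main obstacle is exactly this density step, and the decisive use of the hypothesis is that the natural unit guarantees $0$ isolated in $\B$, without which $1_\B\notin\Lip_0(\B)$ and the invertibility argument collapses.
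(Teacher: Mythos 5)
Your proof is correct, and it shares the paper's overall scaffolding (transfer to $\B=\B(\MM,\alpha)$ via the algebra isomorphism $Q_\alpha^L$ of Proposition~\ref{cor:canonicalMorphism}, plus the observation that the natural unit forces $0$ to be isolated in $\B$), but it diverges genuinely at the key step. The paper does not prove density of the evaluations in the spectrum of $\Lip_0(\B)$: it imports it wholesale from \cite{WeaverBook2018}*{Lemma 7.28}, after identifying the evaluations with $\Delta_0^*(\MM)$ via Lemma~\ref{lem:normalCharacters}, and it then extracts the second identity (the one for $\Delta(\MM)$) from Theorem~\ref{thm:w*EqualsNorm}, i.e.\ from the fact that $0$ is $w^*$-isolated in $\Delta_0^*(\MM)$, so that $\{0\}$ is clopen and can be split off the closure. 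You instead prove the density by hand with the classical Gelfand argument: if a character $\omega\in\Delta(\B)$ avoided every $\delta_\B(b)$, $b\neq 0$, on a basic $w^*$-neighborhood determined by $f_1,\dots,f_n$ and $\varepsilon$, then $g=\sum_i(f_i-\omega(f_i)1_\B)^2$ would be bounded below by $\varepsilon^2$ off the origin, hence invertible in $\Lip_0(\B)$ (this is exactly where $0$ isolated, i.e.\ $1_\B\in\Lip_0(\B)$, is used), while $\omega(g)=0$ --- a contradiction; and you dispose of the $\Delta$ versus $\Delta_0$ issue algebraically, noting that $\omega(1_\B)=1$ survives $w^*$-limits, so no net of evaluations at nonzero points can converge to $0$. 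Your squares argument is legitimate here because Section~\ref{Sec:4} works over the reals. What each approach buys: the paper's proof is shorter and consistent with its systematic policy of quoting Weaver's bounded-space results and pushing them through $Q_\alpha^L$; yours is self-contained, makes transparent exactly why the natural-unit hypothesis is needed, and bypasses both Lemma~\ref{lem:normalCharacters} and Theorem~\ref{thm:w*EqualsNorm} (you only need the easy remark that each $\mu(x)$ is a character, together with automatic boundedness of characters under the standing assumption $\Mop(\alpha)(\KK(\alpha)+2)\le 1$). One cosmetic point: the natural home of the characters is $\Lip_0(\MM)^*=\F(\MM)^{**}$, as you write, rather than the paper's $\Lip_0(\MM)^{**}$; your usage is the consistent one.
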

\begin{proof}
By Lemma~\ref{lem:normalCharacters} and Theorem~\ref{thm:w*EqualsNorm}, $\B:=\B(\MM,\alpha) = \Delta_0^*(\MM)$, and $0$ is an isolated point of $\Delta_0^*(\MM)$.

By \cite{WeaverBook2018}*{Lemma 7.28} the set $\Delta_0^*(\B)$ is dense in the $w^*$-closed set $\Delta_0(\B)\subset\Lip_0(\B)^{**}$. Let $Q_\alpha^L$ be as in \eqref{eq:S*}. By Proposition~\ref{cor:canonicalMorphism},
\[
(Q_\alpha^L)^{**}(\Delta_0^*(\B)) = \Delta_0^*(\MM) \text{ and }
(Q_\alpha^L)^{**}(\Delta_0(\B)) = \Delta_0(\MM).
\]
Therefore, $\overline{\B}^{w^*} = \Delta_0(\MM)$. Moreover, since $\{0\}$ is a closed and open set, we have $0\notin \overline{\B\setminus \{0\}}^{w^*}$ and $\overline{\B\setminus \{0\}}^{w^*}\cup \{0\} = \overline{\B}^{w^*}$. Thus,
\[
\Delta(\MM) = \overline{\B}^{w^*}\setminus \{0\} = \overline{\B\setminus \{0\}}^{w^*}.\qedhere
\]
\end{proof}

The following result is an interesting analogue of \cite{WeaverBook2018}*{Corollary 7.27}. In particular it shows that the linear topological structure of Lipschitz-free spaces is completely determined by the purely algebraic structure of their duals.
\begin{Theorem}\label{thm:surprisignlyStrongResult}
Let $\NN$ be another metric space. Then there exists an algebra isomorphism between $\Lip_0(\MM)$ and $\Lip_0(\NN)$ if and only if $\B(\MM,\alpha)\simeq_\Lip \B(\NN,\alpha)$.
\end{Theorem}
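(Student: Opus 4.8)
The plan is to reduce the statement to the already understood situation of \emph{bounded} metric spaces, where the corresponding equivalence — an algebra isomorphism between the Lipschitz algebras exists if and only if the underlying spaces are Lipschitz isomorphic — is available from \cite{WeaverBook2018}*{Corollary 7.27}. The bridge is Proposition~\ref{cor:canonicalMorphism}, which asserts that $Q_\alpha^L\colon\Lip_0(\MM)\to\Lip_0(\B(\MM,\alpha))$ is an algebra isomorphism carrying the product $\odot_\alpha$ to the ordinary pointwise product, with inverse $P_\alpha^L$; the same holds for $\NN$, with maps we temporarily denote $Q_{\alpha,\NN}^L$ and $P_{\alpha,\NN}^L$. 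In this way an algebra isomorphism between $\Lip_0(\MM)$ and $\Lip_0(\NN)$ becomes interchangeable with an algebra isomorphism between $\Lip_0(\B(\MM,\alpha))$ and $\Lip_0(\B(\NN,\alpha))$, and the whole problem moves to the bounded spaces $\B(\MM,\alpha)$ and $\B(\NN,\alpha)$.

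For the implication from right to left I would argue directly. A zero-preserving bi-Lipschitz bijection $h\colon\B(\MM,\alpha)\to\B(\NN,\alpha)$ induces the composition operator $C_h\colon\Lip_0(\B(\NN,\alpha))\to\Lip_0(\B(\MM,\alpha))$, $g\mapsto g\circ h$, which is a bijective algebra homomorphism for the pointwise products (that $g\circ h\in\Lip_0$ uses $h(0)=0$ together with the bi-Lipschitz estimates, and multiplicativity is immediate from $(g_1g_2)\circ h=(g_1\circ h)(g_2\circ h)$). Conjugating by the isomorphisms of Proposition~\ref{cor:canonicalMorphism}, the operator
\[
T:=P_{\alpha,\NN}^L\circ C_h^{-1}\circ Q_\alpha^L\colon\Lip_0(\MM)\to\Lip_0(\NN)
\]
is a composition of three algebra isomorphisms and hence an algebra isomorphism for $\odot_\alpha$.

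For the converse, let $T\colon\Lip_0(\MM)\to\Lip_0(\NN)$ be an algebra isomorphism. Conjugating as above, $\tilde T:=Q_{\alpha,\NN}^L\circ T\circ P_\alpha^L$ is an algebra isomorphism between $\Lip_0(\B(\MM,\alpha))$ and $\Lip_0(\B(\NN,\alpha))$ for the pointwise products. Since $\B(\MM,\alpha)$ and $\B(\NN,\alpha)$ are bounded (Lemma~\ref{lem:comp}), \cite{WeaverBook2018}*{Corollary 7.27} produces a zero-preserving bi-Lipschitz bijection between them, that is, $\B(\MM,\alpha)\simeq_\Lip\B(\NN,\alpha)$.

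The delicate point is the converse, and it is entirely concentrated in the bounded-space input; two features must be in force there, and both are supplied by earlier results of this section rather than assumed about $T$. First, one needs automatic continuity: Proposition~\ref{p:algHomBdd} guarantees that $T$, $T^{-1}$, and hence $\tilde T$, $\tilde T^{-1}$, are bounded and order preserving, so that $\tilde T$ is genuinely a topological (and lattice) isomorphism, as required to run the spectral machinery behind \cite{WeaverBook2018}*{Corollary 7.27}. Second — and this is what makes the statement \emph{surprisingly strong} — one must recover the metric structure, not merely a homeomorphism of spectra: the bijection produced by the bounded-space theorem is the restriction of the transpose of $\tilde T$ to the $w^*$-continuous characters, which by Lemma~\ref{lem:normalCharacters} are exactly the points of $\B(\cdot,\alpha)\setminus\{0\}$, and by Theorem~\ref{thm:w*EqualsNorm} the $w^*$ topology there coincides with the metric topology. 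I expect the main obstacle to be precisely this intrinsic identification of the normal characters, carried out with no a priori $w^*$-continuity hypothesis on $T$ and relying only on the order structure that $\tilde T$ preserves; it is exactly the content transferred from Weaver's bounded-space results through Proposition~\ref{cor:canonicalMorphism}, and it is what upgrades the purely algebraic isomorphism to a bi-Lipschitz equivalence of $\B(\MM,\alpha)$ and $\B(\NN,\alpha)$.
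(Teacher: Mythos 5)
Your proof is correct in outline and reaches the theorem, but in the converse direction it takes a genuinely different route from the paper's. The forward implication is the same in both: transport a (zero-preserving) bi-Lipschitz bijection $h$ through the conjugation by $Q_\alpha^L$ and $P_\alpha^L$ supplied by Proposition~\ref{cor:canonicalMorphism}. For the converse, however, the paper never conjugates $T$ and never invokes \cite{WeaverBook2018}*{Corollary 7.27}: it applies Proposition~\ref{p:algHomBdd} to get that $T$ and $T^{-1}$ are bounded, observes that the adjoint $T^*$ is then $w^*$-$w^*$ continuous and carries multiplicative functionals to multiplicative functionals, hence maps $\Delta_0^*(\NN)$ bijectively onto $\Delta_0^*(\MM)$; Lemma~\ref{lem:normalCharacters} identifies these sets with $\B(\NN,\alpha)$ and $\B(\MM,\alpha)$, and since the metric on each $\B$ is exactly the norm metric of the dual in which it sits, the restriction of the bounded linear operator $T^*$ (with bounded inverse $(T^{-1})^*$) is automatically a zero-preserving bi-Lipschitz bijection; Theorem~\ref{thm:w*EqualsNorm} is cited to package this identification. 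Both routes ultimately rest on the same spectral machinery of Weaver, but yours imports it as a black box through Corollary 7.27, whereas the paper re-runs the argument directly in the unbounded setting using only statements it has already transferred (Proposition~\ref{p:algHomBdd}, Lemma~\ref{lem:normalCharacters}, Theorem~\ref{thm:w*EqualsNorm}). What your version buys is brevity; what the paper's buys is an explicit description of the equivalence (it is $T^*$ restricted to $\B(\NN,\alpha)$) and independence from the exact form in which Weaver states his corollary ($\Lip_0$ versus $\Lip$, pointed versus unpointed, algebraic versus continuous isomorphisms). Your use of Proposition~\ref{p:algHomBdd} to secure continuity of $\tilde T$ before invoking Weaver is the right move on that last point.

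Two caveats, both shared with the paper's own proof, are worth flagging. First, completeness: Weaver's identification of the $w^*$-continuous characters with evaluations at points of the space (which underlies both his Corollary 7.27 and the paper's Lemma~\ref{lem:normalCharacters}) requires the metric space to be complete, and Proposition~\ref{lem:propertiesOfTransformation} ties completeness of $\B(\MM,\alpha)$ to that of $\MM$. This is not cosmetic: if $\MM$ is a dense proper subset of a complete space $\NN$ (say the rationals in $[0,1]$ with base point $0$), then $\Lip_0(\MM)$ and $\Lip_0(\NN)$ coincide as algebras with $\odot_\alpha$, yet $\B(\MM,\alpha)$ and $\B(\NN,\alpha)$ do not even have the same cardinality. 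So your appeal to Corollary 7.27 literally requires $\B(\MM,\alpha)$ and $\B(\NN,\alpha)$ complete, an hypothesis the theorem (and the paper's proof) silently needs as well. Second, in the forward direction you assume $h(0)=0$, which the paper's definition of $\simeq_\Lip$ does not grant; without this, $C_h$ does not map $\Lip_0$ into $\Lip_0$, and a small extra argument (or a pointed reformulation of the statement) is needed --- here too the paper's ``it is clear'' glosses over the same point.
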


\begin{proof}
Suppose first that $\B(\MM,\alpha)\simeq_\Lip \B(\NN,\alpha)$. Then it is clear that $\Lip_0(\B(\MM,\alpha))$ and $\Lip_0(\B(\NN,\alpha))$ are algebraically isomorphic with their standard pointwise product, which in turn implies that $\Lip_0(\MM)$ and $\Lip_0(\NN)$ are algebraically isomorphic with the product $\odot_\alpha$.

Conversely, suppose that there exists an algebra isomorphism
\[
T\colon\Lip_0(\MM)\to\Lip_0(\NN).
\]
By Proposition~\ref{p:algHomBdd}, $T$ is an isomorphism between the Banach spaces $\Lip_0(\MM)$ and $\Lip_0(\NN)$. Since $T$ preserves multiplication and its dual map $T^*$ is $w^*$-$w^*$ continuous, we claim that $T^*(\Delta_0^*(\NN)) = \Delta_0^*(\MM)$. Indeed, it suffices to check that for every $m\in \Delta_0^*(\NN)$ the map
\[
T^*(m)\colon \Lip_0(\MM)\to \Rea
\]
preserves multiplication and is $w^*$-continuous, which is routine using the properties of $T$. By Theorem~\ref{thm:w*EqualsNorm} we obtain that $\B(\MM,\alpha)$ and $\B(\NN,\alpha)$ are Lipschitz isomorphic.
\end{proof}
An immediate consequence of Theorems~\ref{thm:surprisignlyStrongResult} and~\ref{thm:boundedEquivToUnbounded} is the following.
\begin{Corollary}
If there is an algebra isomorphism between $\Lip_0(\MM)$ and $\Lip_0(\NN)$, then $\F(\MM)\simeq \F(\NN)$.
\end{Corollary}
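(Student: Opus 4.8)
The plan is to read the result off as a concatenation of three isomorphisms, using the two cited theorems essentially as black boxes. First I would invoke Theorem~\ref{thm:surprisignlyStrongResult}: an algebra isomorphism between $\Lip_0(\MM)$ and $\Lip_0(\NN)$ (with respect to the product $\odot_\alpha$ that is fixed throughout this section) exists if and only if the bounded metric spaces $\B(\MM,\alpha)$ and $\B(\NN,\alpha)$ are bi-Lipschitz equivalent. Thus from the hypothesis I immediately extract $\B(\MM,\alpha)\simeq_{\Lip}\B(\NN,\alpha)$.

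Next I would recall the elementary functorial fact, already noted earlier in the paper, that a bi-Lipschitz bijection between pointed metric spaces linearizes to a linear isomorphism of the associated Lipschitz-free spaces. Applying this to the equivalence just obtained yields $\F(\B(\MM,\alpha))\simeq\F(\B(\NN,\alpha))$. Then Theorem~\ref{thm:boundedEquivToUnbounded}, specialized to $p=1$, supplies the two flanking isomorphisms $\F(\MM)\simeq\F(\B(\MM,\alpha))$ and $\F(\NN)\simeq\F(\B(\NN,\alpha))$. Composing the three gives
\[
\F(\MM)\simeq\F(\B(\MM,\alpha))\simeq\F(\B(\NN,\alpha))\simeq\F(\NN),
\]
which is exactly the assertion.

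There is no genuine obstacle here; the statement is indeed an immediate corollary, and the only points worth a line of verification are bookkeeping. One should check that the algebra isomorphism in the hypothesis is taken with respect to the same $\odot_\alpha$ appearing in Theorem~\ref{thm:surprisignlyStrongResult} (which is the standing convention of this section, where $\alpha$ is fixed with $\Mop(\alpha)(\KK(\alpha)+2)\le 1$), and that everything remains within the real scalar field, consistently with the running assumption of Section~\ref{Sec:4}. Once these conventions are in force, the chain of isomorphisms closes with no further computation.
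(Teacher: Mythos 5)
Your proof is correct and matches the paper's intended argument exactly: the paper states the corollary as an immediate consequence of Theorems~\ref{thm:surprisignlyStrongResult} and~\ref{thm:boundedEquivToUnbounded}, which is precisely your chain $\F(\MM)\simeq\F(\B(\MM,\alpha))\simeq\F(\B(\NN,\alpha))\simeq\F(\NN)$. Your bookkeeping remarks about the fixed $\alpha$ and the real scalar field are consistent with the standing conventions of Section~\ref{Sec:4}.
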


Note also that a similar result holds as well for (nonlinear) lattice isomorphisms. This follows from \cite{CC11}*{Theorem 1}, where it is proved that two bounded metric spaces $\MM$ and $\NN$ are Lipschitz isomorphic if and only if there is a (nonlinear) lattice isomorphism betweeen $\Lip(\MM)$ and $\Lip(\NN)$.

\section{Simplifications of existing proofs using our construction}\label{Sec:6}
\noindent
There are several results on Lipschitz spaces and on Lipschitz free-spaces that work for unbounded metric spaces but whose proofs are much easier for bounded ones. This section is devoted to exhibiting that our methods permit to circumvent the technicalities that one encounters when proving some of this results for unbounded spaces. Our choice of the known results below is rather arbitrary and non exhaustive. We believe there are many more applications of our techniques in this direction and encourage the reader to further exploit them. In this section we deal with real Banach spaces only (the complex-scalar case is not considered here).
\subsection{Normal functionals}
The main result of \cite{AP20} consists of extending to any $\omega\in(\Lip_0(\MM))^*$ the equivalence between \ref{nC:1} and \ref{nC:2} in Lemma~\ref{lem:normalCharacters}, that is, in proving that every normal functional $\phi\in\F(\MM)^{**}$ is $w^*$-continuous. The proof carried out by the authors of \cite{AP20} is easy for bounded metric spaces but for unbounded metric spaces the authors need \cite{AP20}*{Lemma 4} and \cite{AP20}*{Lemma 5} (in particular a very deep analogue of ``Kalton's decomposition'') which reduces the result to investigating Lipschitz-free spaces over annuli.

Let $\alpha=\alpha^{(0)}$, so that $0$ is an isolated point of $\B(\MM,\alpha)$. Since by Proposition~\ref{cor:canonicalMorphism} the isomorphism $Q_\alpha^L$ defined as in \eqref{eq:S*} preserves normality and $w^*$-continuity, it suffices to prove the result for bounded metric spaces $\B$ with $d(0,x)=1$ for all $x\in\B\setminus\{0\}$. Thus, our construction would allow the authors to make the reduction to the investigation of Lipschitz-free spaces over annuli much easier. In order to appreciate this reduction let us briefly sketch the most important simplifications in this special case when compared with the much more technically involved proof from \cite{AP20}.

\begin{Theorem}
Let $\MM$ be a pointed metric space such that $d(x,0)=1$ for all $x\in\MM\setminus\{0\}$. A functional $\phi\in\F(\MM)^{**}$ is normal if and only if it is $w^*$-continuous.
\end{Theorem}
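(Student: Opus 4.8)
The plan is to prove the two implications separately, the substantial one being that normality forces $w^*$-continuity, i.e. membership in $\F(\MM)$ (recall $\F(\MM)^{**}=(\Lip_0(\MM))^*$, and a linear functional on $\Lip_0(\MM)=\F(\MM)^*$ is $w^*$-continuous exactly when it lies in $\F(\MM)$). The converse is the easy direction: if $\phi\in\F(\MM)$ and $(f_i)$ is a norm-bounded net in $\Lip_0(\MM)$ increasing pointwise to $f$, then $f_i\to f$ in the $w^*$-topology by Lemma~\ref{lem:w*nets}, so $\phi(f_i)\to\phi(f)$, which is the content of normality.

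For the nontrivial direction, fix a normal $\phi$; recall that a normal functional is positive. By the Banach--Dieudonn\'e theorem (see \cite{AlbiacKalton2016}*{Appendix G.8}), a functional on the dual space $\Lip_0(\MM)=\F(\MM)^*$ is $w^*$-continuous precisely when its restriction to the closed unit ball $B$ of $\Lip_0(\MM)$ is $w^*$-continuous. On bounded sets the $w^*$-topology agrees with the topology of pointwise convergence (Lemma~\ref{lem:w*nets}), so it suffices to show that $\phi|_B$ is continuous for pointwise convergence. This is where the hypothesis $d(x,0)=1$ enters decisively: every $f\in B$ satisfies $|f(x)|=|f(x)-f(0)|\le \Lip(f)\,d(x,0)\le 1$, so the functions in $B$ are uniformly bounded by $1$.

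The key step is an envelope argument. Let $(f_i)_{i\in I}$ be a net in $B$ converging pointwise to $f$, and for each $j\in I$ set
\[
h_j=\inf_{i\ge j} f_i,\qquad g_j=\sup_{i\ge j} f_i.
\]
Since the $f_i$ are $1$-Lipschitz and uniformly bounded by $1$, both $h_j$ and $g_j$ are finite, $1$-Lipschitz, and vanish at $0$, hence lie in $B$; this is exactly the point that breaks down for unbounded $\MM$ and that the normalization $d(x,0)=1$ circumvents. As $j$ increases along $I$ the tails shrink, so $(h_j)_j$ increases and $(g_j)_j$ decreases, and because $(f_i)$ converges we have $h_j\uparrow f$ and $g_j\downarrow f$ pointwise (the pointwise limits are $\liminf_i f_i=\limsup_i f_i=f$). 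Normality, applied to $(h_j)$ and to $(-g_j)\uparrow -f$, yields $\phi(h_j)\to\phi(f)$ and $\phi(g_j)\to\phi(f)$. Finally, for $i\ge j$ we have $h_j\le f_i\le g_j$ pointwise, so positivity gives $\phi(h_j)\le\phi(f_i)\le\phi(g_j)$; squeezing as $j$ runs through $I$ forces $\phi(f_i)\to\phi(f)$. Thus $\phi|_B$ is pointwise continuous, and Banach--Dieudonn\'e upgrades this to $w^*$-continuity of $\phi$ on all of $\Lip_0(\MM)$, i.e. $\phi\in\F(\MM)$.

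The only genuine obstacle is the legitimacy of the envelopes $h_j,g_j$ as elements of $\Lip_0(\MM)$: in the general unbounded setting the pointwise suprema may be infinite, which is precisely why \cite{AP20} must resort to a Kalton-type decomposition and reduce to annuli by hand. Under the normalization $d(x,0)=1$ the uniform bound makes these envelopes automatic, and everything else is the elementary monotone-net squeeze above. The points that still require care are the monotonicity of $(h_j)$ and $(g_j)$ along the directed index set $I$ together with the identification of their pointwise limits, and the applicability of normality to the decreasing net $(g_j)$ via $(-g_j)\uparrow -f$.
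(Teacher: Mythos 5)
Your proof of the easy implication is fine, but the main direction contains a fatal gap: the claim ``recall that a normal functional is positive'' is false, and your squeeze argument depends on it essentially. For the theorem to be an equivalence (and this is the notion used in \cite{AP20} and in Weaver's question), a functional $\phi\in\F(\MM)^{**}$ is \emph{normal} when $\phi(f_i)\to\phi(f)$ for every norm-bounded net $(f_i)$ increasing pointwise to $f$ --- mere convergence of the values, not monotone convergence. Under this definition every $w^*$-continuous functional is normal; in particular $-\delta_\MM(x)$ is normal, and it is certainly not positive, so normality does not imply positivity. (The paper's remark that ``any normal operator is positive'' concerns maps $T\colon\Lip_0(\MM)\to\Lip_0(\NN)$ under the stronger requirement that $(T(f_i))$ \emph{increases} towards $T(f)$; if one imposed that requirement on scalar functionals, the backward implication ``$w^*$-continuous $\Rightarrow$ normal'' of the very theorem you are proving would fail.) Without positivity, the inequalities $h_j\le f_i\le g_j$ give no control whatsoever on $\phi(f_i)$, and this cannot be repaired by a Jordan-type decomposition $\phi=\phi^+-\phi^-$: order intervals of $\Lip_0(\MM)$ are not norm-bounded (a function squeezed between $0$ and a uniformly small function can have arbitrarily large Lipschitz constant), so bounded functionals need not have positive parts, let alone normal ones. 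This failure of positivity is precisely the crux of the problem and the reason the result, a question of Weaver, required the work of \cite{AP20}.

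Two further symptoms of the gap. First, your diagnosis of where the hypothesis $d(x,0)=1$ enters is wrong: the envelopes $h_j=\inf_{i\ge j}f_i$ and $g_j=\sup_{i\ge j}f_i$ belong to $\Lip_0(\MM)$ for \emph{any} metric space $\MM$ as soon as the net is norm-bounded, since $|f_i(x)|\le\Lip(f_i)\,d(x,0)<\infty$ and a finite pointwise supremum (or infimum) of uniformly Lipschitz functions is Lipschitz with the same constant. Hence, were your argument valid, it would prove the full theorem of \cite{AP20} for arbitrary $\MM$ in a few lines, making both the sphere normalization and the paper's whole reduction pointless --- a strong sign something is amiss. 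Second, what your argument does prove correctly is that every \emph{positive} normal functional is $w^*$-continuous, for any $\MM$; that is the known easy case. The paper's proof is of a different nature altogether: it is not self-contained, but follows the proof of \cite{AP20}*{Theorem 2}, observing that under the normalization $d(x,0)=1$ one may take $r=R=1$, skip the Kalton-type decomposition of \cite{AP20}*{Lemmas 4 and 5}, and replace the auxiliary functions $e$, $e'$ used there by $\Ind_{\MM\setminus\{0\}}$, exploiting that $\Vert f\Vert_\infty\le\Lip(f)$; that is where the hypothesis actually does its work.
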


\begin{proof}[Sketch of the simplifications of the proof from \cite{AP20}]
We refer the reader to \cite{AP20}*{proof of Theorem 2}. Simplifications of the proof in this special case are the following:
\begin{itemize}[leftmargin=*]
\item We circumvent \cite{AP20}*{Lemmas 4 and 5}. Moreover, in this particular case we have $r=1=R$.
\item Our assumptions yield $\Vert f\Vert_\infty\le \Lip(f)$ for all $f\in\Lip_0(\MM)$ and $\Lip(\Ind_{\MM\setminus\{0\}})=1$, so instead of the functions $e$ and $e'$ we would use simply the function $\Ind_{\MM\setminus\{0\}}$ which would further simplify several computations.\qedhere
\end{itemize}
\end{proof}

\subsection{The Intersection Theorem and supports}
For Lipschitz-free spaces there is a well-defined notion of \emph{support} developed very recently in \cite{APP19}. Let us recall that the \emph{support} of $\gamma\in\F(\MM)$, denoted by $\supp(\gamma)$, is the smallest closed set $K$ such that $\gamma\in\F(K\cup\{0\})$. Note that if $L$ is the smallest closed set with $\gamma\in\F(L)$, then $\supp(\gamma)=L\setminus\{0\}$ in the case when $0$ is an isolated point of $L$, and $\supp(\gamma)=L$ otherwise. The existence of the support is ensured by the following theorem.

\begin{Theorem}[cf.\ \cites{AP19,APP19}] \label{thm:intersectionTheorem}
Let $\{K_i\colon i\in I\}$ be a family of closed subsets of $\MM$ with nonempty intersection. Then
\[
\cap_{i\in I} \F(K_i) = \F\left(\cap_{i\in I} K_i\right).
\]
\end{Theorem}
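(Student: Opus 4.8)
The plan is to deduce the statement from its already-known counterpart over \emph{bounded} metric spaces by transporting everything through the isomorphism $\F(\MM)\simeq\F(\B(\MM,\alpha))$ of Theorem~\ref{thm:boundedEquivToUnbounded}. Throughout I would take $\alpha=\alpha^{(0)}$ (so $\Mop(\alpha)=1<\infty$ and $\alpha$ is $1$-Lipschitz), write $\B=\B(\MM,\alpha)$, and let $P_\alpha^F\colon\F(\MM)\to\F(\B)$ and $Q_\alpha^F=(P_\alpha^F)^{-1}$ be the inverse isomorphisms from that theorem. Since $\bigcap_{i\in I}K_i\neq\emptyset$ and the linear isometric structure of $\F(\MM)$ is independent of the choice of base point, I would first fix the distinguished point $0$ inside $\bigcap_{i\in I}K_i$; then every $K_i$, as well as $\bigcap_i K_i$, is a closed subset of $\MM$ containing $0$. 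For $p=1$ the canonical linearization of an inclusion is an isometry (this is exactly the point that fails for $p<1$, cf.\ Remark~\ref{rmk:Lj}), so $\F(K_i)$ is identified with the closed linear span in $\F(\MM)$ of $\{\delta_\MM(x):x\in K_i\}$, and similarly for $\F(\bigcap_i K_i)$.

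The key preliminary observation is that $P_\alpha^F$ carries this lattice of subspaces onto the corresponding lattice for $\B$: for any closed $K\subseteq\MM$ with $0\in K$ I claim that $P_\alpha^F(\F(K))=\F(\mu(K))$, where $\mu(K)=\B(K,\alpha)$. Indeed, from the defining formula $P_\alpha^F(\delta_\MM(x))=\zeta(x)\,\delta_\B(\mu(x))$ one reads off that $P_\alpha^F$ maps $\PP(K)=\spn\{\delta_\MM(x):x\in K\setminus\{0\}\}$ onto $\spn\{\delta_\B(\mu(x)):x\in K\setminus\{0\}\}$; being a bounded isomorphism, it maps the closure of the former onto the closure of the latter, which are precisely $\F(K)$ and $\F(\mu(K))$. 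Here I rely on $\mu$ being injective (Proposition~\ref{lem:propertiesOfTransformation}~\ref{PoT:1}) and on $\mu(K)$ being a closed subset of $\B$ containing $0$ (Proposition~\ref{lem:propertiesOfTransformation}~\ref{PoT:12}), so that $\F(\mu(K))$ genuinely is the free space over a pointed closed subset of $\B$.

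With this in hand the argument becomes pure bookkeeping. Because $P_\alpha^F$ is a linear bijection it commutes with arbitrary intersections, so
\[
P_\alpha^F\Bigl(\bigcap_{i\in I}\F(K_i)\Bigr)=\bigcap_{i\in I}P_\alpha^F(\F(K_i))=\bigcap_{i\in I}\F(\mu(K_i)).
\]
Since $\B$ is bounded, the bounded case of the intersection theorem (available from \cite{AP19}) applies and gives $\bigcap_{i\in I}\F(\mu(K_i))=\F\bigl(\bigcap_{i\in I}\mu(K_i)\bigr)$. Injectivity of $\mu$ yields $\bigcap_{i\in I}\mu(K_i)=\mu\bigl(\bigcap_{i\in I}K_i\bigr)$, and $\bigcap_i K_i$ is closed and contains $0$, so the right-hand side equals $\F\bigl(\mu(\bigcap_i K_i)\bigr)=P_\alpha^F\bigl(\F(\bigcap_i K_i)\bigr)$. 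Applying $Q_\alpha^F$ to both ends produces exactly $\bigcap_{i\in I}\F(K_i)=\F\bigl(\bigcap_{i\in I}K_i\bigr)$.

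The hard part is not the bookkeeping of the last paragraph but securing the claim of the second paragraph, that the global isomorphism $P_\alpha^F$ really sends each subspace $\F(K)$ onto $\F(\mu(K))$. This rests on the $p=1$ fact that $\F(K)$ embeds isometrically into $\F(\MM)$ as the closed span of the evaluations on $K$ (which is why the statement concerns $\F=\F_1$ rather than $\F_p$), together with the transfer of closedness under $\mu$ furnished by Proposition~\ref{lem:propertiesOfTransformation}~\ref{PoT:12}. Once these are in place, the entire content of the theorem is carried by the already-known bounded case, which is precisely the kind of reduction this section is meant to illustrate.
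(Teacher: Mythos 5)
Your proof is correct and follows essentially the same route as the paper's (Proposition~\ref{prop:spt}): reduce to the bounded case via the isomorphism $P_\alpha^F$, using the closedness transfer of Proposition~\ref{lem:propertiesOfTransformation}~\ref{PoT:12}, the injectivity of $\mu$, and the identity $P_\alpha^F(\F(K))=\F(\mu(K))$, which the paper records as Lemma~\ref{lem:Transfersubspace}. The only difference is that you also spell out the closed-span argument proving that identity, which the paper states without proof.
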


To properly understand Theorem~\ref{thm:intersectionTheorem}, called the Intersection Theorem, we must take into account that for every subset $\NN$ of a metric space $\MM$ there is a canonical isometric embedding of $\F(\NN)$ into $\F(\MM)$. This crucial property does not transfer to the case $p<1$ (see  Remark~\ref{rmk:Lj}).

The Intersection Theorem was proved for bounded metric spaces in \cite{AP19} and extended to its full generality more recently in \cite{APP19}. Our construction shows that the theorem for bounded spaces immediately implies the general case. Moreover, the notion of a support is preserved by our construction (see Proposition~\ref{prop:spt}). Recall that $\mu\colon\MM\rightarrow \B$ is the map from \eqref{eq:notation:b} and $P_\alpha^F\colon\F(\MM)\to\F(\B)$ is the isomorphism in Theorem~\ref{thm:boundedEquivToUnbounded}. 
For further reference we write down a lemma that we will use a couple of times.
\begin{Lemma}\label{lem:Transfersubspace}
Let $\MM$ be a metric space, and let $0\in\NN\subset \MM$. Then $P_\alpha^F(\F(\NN))=\F(\mu(\NN))$.
\end{Lemma}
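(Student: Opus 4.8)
The plan is to exploit that $P_\alpha^F$ is a linear homeomorphism and that both $\F(\NN)$ and $\F(\mu(\NN))$ are, via the canonical isometric embeddings, the closed linear spans of the relevant point masses; the equality then follows formally from the fact that a homeomorphism carries closures to closures.

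First I would recall that, since we are in the case $p=1$, the inclusion $j\colon\NN\hookrightarrow\MM$ linearizes to the canonical isometric embedding identifying $\F(\NN)$ with the closed subspace $\cospn\{\delta_\MM(x)\colon x\in\NN\setminus\{0\}\}$ of $\F(\MM)$. Likewise, since $0\in\NN$ forces $0=\mu(0)\in\mu(\NN)$ and $\mu$ is injective by Proposition~\ref{lem:propertiesOfTransformation}~\ref{PoT:1}, the set $\mu(\NN)\setminus\{0\}$ equals $\{\mu(x)\colon x\in\NN\setminus\{0\}\}$, so $\F(\mu(\NN))$ is identified with $\cospn\{\delta_\B(\mu(x))\colon x\in\NN\setminus\{0\}\}$ inside $\F(\B)$.

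Next I would compute $P_\alpha^F$ on the dense subspace $V:=\spn\{\delta_\MM(x)\colon x\in\NN\setminus\{0\}\}$ of $\F(\NN)$. By the defining formula $P_\alpha^F(\delta_\MM(x))=\zeta(x)\,\delta_\B(\mu(x))$ from Theorem~\ref{thm:boundedEquivToUnbounded}, and since $\zeta(x)>0$ for $x\neq 0$, the nonzero scalars $\zeta(x)$ may be absorbed into the span, giving $P_\alpha^F(V)=\spn\{\delta_\B(\mu(x))\colon x\in\NN\setminus\{0\}\}$, which is precisely the dense subspace of $\F(\mu(\NN))$ isolated in the previous step.

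Finally, because $P_\alpha^F$ is an isomorphism of Banach spaces (Theorem~\ref{thm:boundedEquivToUnbounded}), it is in particular a linear homeomorphism, and hence maps the closure of any set onto the closure of its image. Therefore
\[
P_\alpha^F(\F(\NN))=P_\alpha^F(\overline{V})=\overline{P_\alpha^F(V)}=\F(\mu(\NN)),
\]
which is the desired equality. The only point requiring any care — and the closest thing to an obstacle — is the bookkeeping of the two canonical isometric identifications together with the use of the injectivity of $\mu$ to describe $\mu(\NN)\setminus\{0\}$; once these are in place, the statement is immediate from the bicontinuity of $P_\alpha^F$.
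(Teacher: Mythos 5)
Your proof is correct and is precisely the argument the paper treats as immediate: the paper states Lemma~\ref{lem:Transfersubspace} without proof, and the intended justification is exactly your identification of $\F(\NN)$ and $\F(\mu(\NN))$ with closed spans of point masses (valid since $p=1$), the formula $P_\alpha^F(\delta_\MM(x))=\zeta(x)\delta_\B(\mu(x))$ with $\zeta(x)>0$ absorbing the scalars, and the fact that the isomorphism $P_\alpha^F$ carries closures to closures.
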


\begin{Proposition}\label{prop:spt}
If the Intersection Theorem holds for bounded metric spaces, then it holds for all metric spaces. Moreover, for any metric space $\MM$ and $\gamma\in\F(\MM)$,
\[
P_\alpha^F(\supp(\gamma)\cup\{0\})=\supp(P_\alpha^F(\gamma))\cup \{0\}.
\]
\end{Proposition}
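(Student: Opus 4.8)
The plan is to push both assertions across the isomorphism $P_\alpha^F$, using Lemma~\ref{lem:Transfersubspace} together with the dictionary between closed subsets of $\MM$ containing $0$ and closed subsets of $\B$ containing $0$ furnished by Proposition~\ref{lem:propertiesOfTransformation}~\ref{PoT:1} and \ref{PoT:12}. Throughout I read the action of $P_\alpha^F$ on the base spaces as the bijection $\mu$ (legitimate, since $P_\alpha^F(\delta_\MM(x))$ is a scalar multiple of $\delta_\B(\mu(x))$ and $\B=\mu(\MM)$), so that the displayed identity is to be understood as $\mu(\supp(\gamma)\cup\{0\})=\supp(P_\alpha^F(\gamma))\cup\{0\}$.

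First I would transfer the Intersection Theorem. Given closed sets $\{K_i\colon i\in I\}$ with nonempty intersection, I place the base point $0$ inside $\cap_i K_i$, so $0\in K_i$ for every $i$. As $P_\alpha^F$ is a linear bijection it commutes with arbitrary intersections of subspaces, and Lemma~\ref{lem:Transfersubspace} gives $P_\alpha^F(\F(K_i))=\F(\mu(K_i))$; by Proposition~\ref{lem:propertiesOfTransformation}~\ref{PoT:12} each $\mu(K_i)$ is a closed subset of the \emph{bounded} space $\B$ containing $0$, and by the injectivity of $\mu$ (Proposition~\ref{lem:propertiesOfTransformation}~\ref{PoT:1}) we have $\cap_i\mu(K_i)=\mu(\cap_i K_i)$. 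Invoking the Intersection Theorem for $\B$ and Lemma~\ref{lem:Transfersubspace} once more yields
\[
P_\alpha^F\Big(\bigcap_{i\in I}\F(K_i)\Big)=\bigcap_{i\in I}\F(\mu(K_i))=\F\Big(\mu\big(\bigcap_{i\in I}K_i\big)\Big)=P_\alpha^F\Big(\F\big(\bigcap_{i\in I}K_i\big)\Big),
\]
and cancelling the injective map $P_\alpha^F$ delivers the Intersection Theorem over $\MM$.

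For the support statement I would first record that $\mu$ is a bijection between the closed subsets of $\MM$ containing $0$ and those of $\B$ containing $0$ (Proposition~\ref{lem:propertiesOfTransformation}~\ref{PoT:1} and \ref{PoT:12}), and that it respects membership: combining Lemma~\ref{lem:Transfersubspace} with $Q_\alpha^F=(P_\alpha^F)^{-1}$, for any such $K$ one has $\gamma\in\F(K)$ if and only if $P_\alpha^F(\gamma)\in\F(\mu(K))$. Using the Intersection Theorem just established, let $L\ni 0$ be the smallest closed set with $\gamma\in\F(L)$; then, by the membership equivalence and the inclusion-preserving character of $\mu$, the set $\mu(L)$ is the smallest closed subset of $\B$ containing $0$ with $P_\alpha^F(\gamma)\in\F(\mu(L))$. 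Finally I would read off the supports from the description recalled before Theorem~\ref{thm:intersectionTheorem}: $\supp(\gamma)$ equals $L$ or $L\setminus\{0\}$ according as $0$ is or is not isolated in $L$, so in either case $\supp(\gamma)\cup\{0\}=L$, and symmetrically $\supp(P_\alpha^F(\gamma))\cup\{0\}=\mu(L)$. Since $\mu$ is injective with $\mu(0)=0$, this gives $\mu(\supp(\gamma)\cup\{0\})=\mu(L)=\supp(P_\alpha^F(\gamma))\cup\{0\}$.

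I expect the only delicate point to be the reverse minimality in the middle step: one must check that every closed $K'\ni 0$ in $\B$ with $P_\alpha^F(\gamma)\in\F(K')$ pulls back through $Q_\alpha^F$ to a closed set $\mu^{-1}(K')\ni 0$ in $\MM$ with $\gamma\in\F(\mu^{-1}(K'))$, forcing $L\subseteq\mu^{-1}(K')$ and hence $\mu(L)\subseteq K'$. This is precisely where one needs Proposition~\ref{lem:propertiesOfTransformation}~\ref{PoT:12} used in the direction from $\B$ back to $\MM$, together with the surjectivity of $\mu$ onto $\B$; everything else is a formal consequence of $P_\alpha^F$ being an isomorphism and of Lemma~\ref{lem:Transfersubspace}.
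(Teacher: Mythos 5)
Your proof is correct and follows essentially the same route as the paper's: transfer the closed sets through $\mu$ using Proposition~\ref{lem:propertiesOfTransformation}~\ref{PoT:12}, apply the bounded Intersection Theorem in $\B$, and pass back and forth through the isomorphism $P_\alpha^F$ via Lemma~\ref{lem:Transfersubspace}. The paper dispatches the ``moreover'' part with ``shown similarly,'' and your minimality argument---including the reading of $P_\alpha^F$ on points of $\MM$ as the bijection $\mu$---is precisely the intended elaboration of that remark.
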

\begin{proof}
Let $\MM$ be an arbitrary metric space and let $\{K_i\colon i\in I\}$ be a family of closed subsets of $\MM$ with non-empty intersection. Without loss of generality we may assume that $0\in \bigcap_{i\in I}K_i$. By Lemma~\ref{lem:propertiesOfTransformation}~\ref{PoT:12}, each $\mu(K_i)$ is closed in $\B$, and so is the set $\mu(\cap_{i\in I} K_i)=\cap_{i\in I} \mu(K_i)$. By the Intersection Theorem for bounded spaces, 
\[
\cap_{i\in I} \F(\mu(K_i))=\F(\cap_{i\in I} \mu(K_i))=\F(\mu(\cap_{i\in I} K_i)).
\]
Applying Lemma~\ref{lem:Transfersubspace}, and using that $P_\alpha^L$ is an isomorphism, puts an end to the proof.
The  ``moreover'' part is shown similarly.
\end{proof}

\begin{Remark}
Our construction actually shows that the proof of the Intersection Theorem, which works for bounded metric spaces, also works for unbounded metric spaces: we need only Theorem~\ref{thm:keyPartInTheIntersectionTheorem} instead of \cite{AP19}*{Proposition 3.2}; the remainder of the proof is exactly the same as in the proof presented in \cite{AP19}*{proof of Theorem 3.3}.
\end{Remark}

\subsection{Compact reduction} Let $W$ be a subset  of a Lipschitz-free space $\F(\MM)$. The set  $W$ is said to be  \emph{tight} if for every $\varepsilon>0$ there is $K\subset\MM$ compact such that 
\[
W\subset \F(K)+\varepsilon B_{\F(\MM)}.
\]
Aliaga et al.\ proved in  \cite{ANPP2020}*{Theorem~2.3}  that every weakly precompact subset of $\F(\MM)$ is tight. Let us explain how our construction helps to simplify the proof of this result. If $W\subset \F(\MM)$ is weakly precompact then $P_\alpha^F(W)$ is a weakly precompact subset of $\F(\B(\MM))$. Combining \cite{ANPP2020}*{Proposition 3.3 and  Theorem 3.2} (which are stated for bounded metric spaces!) gives that $P_\alpha^F(W)$ is tight. By Lemma~\ref{lem:Transfersubspace} and Proposition~\ref{lem:propertiesOfTransformation}~\ref{PoT:11}, we infer that $W$ is tight.

\medskip

\noindent{\bf Acknowledgments.} The authors would like to thank  Nik Weaver for bringing to their attention  the connections of  the contents of this paper with previous developments in the theory, 
and Colin Petitjean
for his feedback on the applicability of our results to compact reduction in Lipschitz free spaces.


\begin{bibdiv}
\begin{biblist}

\bib{AlbiacKalton2016}{book}{
author={Albiac, F.},
author={Kalton, N.~J.},
title={Topics in {B}anach space theory},
edition={Second Edition},
series={Graduate Texts in Mathematics},
publisher={Springer, [Cham]},
date={2016},
volume={233},
note={With a foreword by Gilles Godefroy},
review={\MR{3526021}},
}

\bib{AACD20JFA}{article}{
author={Albiac, F.},
author={Ansorena, J.~L.},
author={C\'{u}th, M.},
author={Doucha, M.},
title={Embeddability of {$\ell_p$} and bases in {L}ipschitz free
{$p$}-spaces for {$0<p\le1$}},
date={2020},
journal={J. Funct. Anal.},
volume={278},
number={4},
pages={108354, 33},
}

\bib{AACD2018}{article}{
author={{Albiac}, F.},
author={Ansorena, J.~L.},
author={{Cuth}, M.},
author={Doucha, M.},
title={Lipschitz free $p$-spaces for $0<p<1$},
date={2020},
doi={10.1007/s11856-020-2061-5},
journal={Israel J. Math.},
}

\bib{AACD2020}{article}{
author={Albiac, F.},
author={Ansorena, J.~L.},
author={C\'uth, Marek},
author={Doucha, Michal},
title={Lipschitz free spaces isomorphic to their infinite sums and
geometric applications},
date={2020},
journal={arXiv e-prints},
	eprint={2005.06555},
}

\bib{AAW2020}{article}{
author={Albiac, F.},
author={Ansorena, J.~L.},
author={On a `philosophical' question about Banach envelopes},
title={Lipschitz free spaces isomorphic to their infinite sums and
geometric applications},
date={2020},
journal={Rev. Mat. Complut.},
	 doi={10.1007/s13163-020-00374-8},
}

\bib{ANPP2020}{article}{
author={Aliaga, R.~J.},
author={ No\^us, C.},
author={Petitjean, C.},
author={Proch\'azka, A.},
title={Compact reduction in Lipschitz free spaces},
date={2020},
journal={arXiv e-prints},
eprint={2004.14310},
}

\bib{AP20}{article}{
author={Aliaga, R.~J.},
author={Perneck\'{a}, E.},
title={Normal functionals on lipschitz spaces are $\text{weak}^*$
continuous},
date={2020},
journal={arXiv e-prints},
eprint={2004.14310},
}

\bib{AP19}{article}{
author={Aliaga, R.~J.},
author={Perneck\'{a}, E.},
title={Supports and extreme points in Lipschitz-free spaces},
date={2020},
journal={Rev. Mat. Iberoam.},
doi={DOI: 10.4171/rmi/1191}
}

\bib{APP19}{article}{
author={Aliaga, R.~J.},
author={Perneck\'{a}, E.},
author={Petitjean, C.},
author={Proch\'{a}zka, A.},
title={Supports in {L}ipschitz-free spaces and applications to extremal
structure},
date={2020},
journal={J. Math. Anal. Appl.},
volume={489},
number={1},
pages={124128, 14},
}

\bib{AFGZ20}{article}{,
author = {Matthew, A.}, 
author = {Matthieu F.},
author = {García-Lirola, L.~C. },
author = {Zvavitch, A.},
title = {Geometry and volume product of finite dimensional Lipschitz-free spaces},
journal = {J. Funct. Anal.},
pages = {108849},
date = {2020},
doi = {10.1016/j.jfa.2020.108849},
}

\bib{CC11}{article}{
   author={Cabello S\'{a}nchez, F\'{e}lix},
   author={Cabello S\'{a}nchez, Javier},
   title={Nonlinear isomorphisms of lattices of Lipschitz functions},
   journal={Houston J. Math.},
   volume={37},
   date={2011},
   number={1},
   pages={181--202},
   issn={0362-1588},
   review={\MR{2786552}},
}

\bib{CCD19}{article}{
author={Candido, L.},
author={C\'{u}th, M},
author={Doucha, M.},
title={Isomorphisms between spaces of {L}ipschitz functions},
date={2019},
journal={J. Funct. Anal.},
volume={277},
number={8},
pages={2697\ndash 2727},
}

\bib{ChM19}{article}{
author={Chiclana, R.},
author={Mart\'{\i}n, M.},
title={The {B}ishop-{P}helps-{B}ollob\'{a}s property for {L}ipschitz
maps},
date={2019},
ISSN={0362-546X},
journal={Nonlinear Anal.},
volume={188},
pages={158\ndash 178},
}


\bib{CKK19}{article}{
author={C\'{u}th, M.},
author={Kalenda, O.~F.~K.},
author={Kaplick\'{y}, P.},
title={Finitely additive measures and complementability of
{L}ipschitz-free spaces},
date={2019},
journal={Israel J. Math.},
volume={230},
number={1},
pages={409\ndash 442},
}

\bib{DalesBook}{book}{
author={Dales, H.~G.},
title={Banach algebras and automatic continuity},
series={London Mathematical Society Monographs. New Series},
publisher={The Clarendon Press, Oxford University Press, New York},
date={2000},
volume={24},
note={Oxford Science Publications},
}


\bib{GL18}{article}{
author={Godefroy, G.},
author={Lerner, N.},
title={Some natural subspaces and quotient spaces of {$L^1$}},
date={2018},
journal={Adv. Oper. Theory},
volume={3},
number={1},
pages={61\ndash 74},
}

\bib{Kaufmann2015}{article}{
author={Kaufmann, P.~L.},
title={Products of {L}ipschitz-free spaces and applications},
date={2015},
journal={Studia Math.},
volume={226},
number={3},
pages={213\ndash 227},
}

\bib{N20}{article}{
author={Novotn\'{y}, M.},
title={Some remarks on {S}chauder bases in {L}ipschitz free spaces},
date={2020},
journal={Bull. Belg. Math. Soc. Simon Stevin},
volume={27},
number={1},
pages={111\ndash 126},
}

\bib{OO20}{article}{
author={Ostrovska, S.},
author={Ostrovskii, M.~I.},
title={On relations between transportation cost spaces and {$\ell_1$}},
date={2020},
journal={J. Math. Anal. Appl.},
volume={491},
number={2},
pages={124338, 12},
}

\bib{R89}{incollection}{
author={Rieffel, M.~A.},
title={Lipschitz extension constants equal projection constants},
date={2006},
booktitle={Operator theory, operator algebras, and applications},
series={Contemp. Math.},
volume={414},
publisher={Amer. Math. Soc., Providence, RI},
pages={147\ndash 162},
}

\bib{RundeBook}{book}{
author={Runde, V.},
title={Amenable {B}anach {A}lgebras. {A} {P}anorama},
publisher={Springer-Verlag, Berlin},
date={2020},
}

\bib{W94}{article}{
   author={Weaver, N.},
   title={Lattices of Lipschitz functions},
   journal={Pacific J. Math.},
   volume={164},
   date={1994},
   number={1},
   pages={179--193},
   issn={0030-8730},
   review={\MR{1267506}},
}

\bib{W95}{article}{
author={Weaver, N.},
title={Order completeness in {L}ipschitz algebras},
date={1995},
journal={J. Funct. Anal.},
volume={130},
number={1},
pages={118\ndash 130},
}

\bib{WeaverBook2018}{book}{
author={Weaver, N.},
title={Lipschitz algebras},
publisher={World Scientific Publishing Co. Pte. Ltd., Hackensack, NJ},
date={2018},
note={Second edition},
}

\bib{W18}{article}{
author={Weaver, N.},
title={On the unique predual problem for {L}ipschitz spaces},
date={2018},
journal={Math. Proc. Cambridge Philos. Soc.},
volume={165},
number={3},
pages={467\ndash 473},
}

\end{biblist}
\end{bibdiv}

\end{document}